\newtheorem{thm}{Theorem}[section]
\newtheorem{thm*}{Theorem A}
\newtheorem*{theorem*}{Theorem}
\newtheorem{cor}[thm]{Corollary}
\newtheorem{lem}[thm]{Lemma}
\newtheorem{rem}{Remark}
\theoremstyle{definition}
\newtheorem{defn}[thm]{Definition}
\theoremstyle{remark}
\numberwithin{equation}{section}
\newcommand{\thmref}[1]{Theorem~\ref{#1}}
\newcommand{\lemref}[1]{Lemma~\ref{#1}}
\newcommand{\corref}[1]{Corollory~\ref{#1}}
\newcommand{\R}{\mathbb{R}}
\newcommand{\Lie}{\mathcal{L}}
\newcommand{\ric}{{\rm Ric}}
\newcommand{\di}{{\rm div}}
\newcommand{\la}{\bm{\langle}}
\newcommand{\ra}{\bm{\rangle}}
\newcommand{\vol}{{\rm{vol}}}
\begin{document}
	
\title[Gradient Estimate and and Liouville Property]{Nash-Moser iteration approach to the logarithmic gradient estimates and Liouville Properties of quasilinear elliptic equations on manifolds}

\author{Jie He}
\address{School of Mathematics and Physics, Beijing University of Chemical Technology,  Chaoyang District, Beijing 100029, China}
	\email{hejie@amss.ac.cn}

\author{Jingchen Hu}
\address{Hua Loo-Keng Center for Mathematical Sciences, Academy of Mathematics and Systems Science, Chinese Academy of Sciences, Beijing 100190, China.}
\email{jingchenhu@amss.ac.cn}
	
\author{Youde Wang*}
\thanks{*Corresponding author}
\address{1. School of Mathematics and Information Sciences, Guangzhou University; 2. Hua Loo-Keng Key Laboratory
		of Mathematics, Institute of Mathematics, Academy of Mathematics and Systems Science, Chinese Academy
		of Sciences, Beijing 100190, China; 3. School of Mathematical Sciences, University of Chinese Academy of Sciences,
		Beijing 100049, China.}
\email{wyd@math.ac.cn}
	
\begin{abstract}
In this paper, we provide a new routine to employ the Nash-Moser iteration technique to analyze the local and global properties of positive solutions to the equation $$\Delta_pv + a|\nabla v|^qv^r =0$$ on a complete Riemannian manifold with Ricci curvature bounded from below, where $p>1$, $q$, $r$ and $a$ are some real constants. Assuming certain conditions on $a,\, p,\, q$ and $r$, we can derive universal and succinct Cheng-Yau type logarithmic gradient estimates for such solutions. In particular, we give the obvious expressions of constants in the logarithmic gradient estimate for entire solutions to the above equation (see \thmref{t10}). The gradient estimates enable us to obtain some Liouville-type theorems, Harnack inequalities and some local estimates near singularities for positive solutions. Some of our results are new even in the case the domain is an Euclidean space and $p=2$.
\end{abstract}
\maketitle
\leftline{\quad Key words: universal gradient estimate; Nash-Moser iteration; Liouville type theorem}
\leftline{\quad MSC 2020: 58J05; 35B45; 35J92}

\textbf{\tableofcontents}
	
\section{Introduction}
One trend in Riemannian geometry since the 1950's has been the study of how curvature affects global properties of partial differential equations and global quantities like the eigenvalues of the Laplacian (\cite{MR1333601,MR3275651}). It is well-known that gradient estimates are a fundamental and powerful tool in the geometry of manifolds (\cite{MR2518892}) and the analysis of partial differential equations on Riemannian manifolds such as the Liouville properties of solutions.

It is well-known that the Liouville theorem has had a huge impact across many fields, such as complex analysis, partial differential equations, geometry, probability, discrete mathematics and complex and algebraic geometry. The impact of the Liouville theorem has been even larger as the starting point of many further developments. For more details on the Liouville properties of harmonic functions and some related theory of function on a manifold we refer to an expository paper \cite{CM} written by T. H. Colding (also see \cite{CM1, CM2, CM3, MR1333601, MR431040}).

The most classical results about gradient estimates can be traced back to Cheng-Yau's gradient estimate for positive harmonic functions(see \cite{CY, MR431040}). Let $(M,g)$ be a complete non-compact Riemannian manifold $(M,g)$ with $\mathrm{Ric}_g\geq-(n-1)\kappa g$,  Cheng and Yau deduced for any positive harmonic function $v$ in a geodesic ball $B_R(o)\subset M$,
\begin{align}\label{equa0}
\sup_{B_{R/2}(o)}\frac{|\nabla v|}{v}\leq C_n\frac{1+\sqrt\kappa R}{R}.
\end{align}
An important feature of Cheng-Yau's estimate is that the RHS of \eqref{equa0} depends only on $n$, $\kappa$ and $R$, it does not depend on some other geometric quantities such as the injective radius and the Hessian or Laplacian of distance function etc. Cheng-Yau's estimate turned out to be very useful, Harnack inequality follows immediately from Cheng-Yau's estimate and Liouville theorem for global positive harmonic functions on non-compact manifolds with $\mathrm{Ric}_g\geq 0$ is also a direct consequence of \eqref{equa0}.

Usually, the above estimate is also called as universal a priori estimates. By using the word ``universal" here, we mean that our bounds are not only independent of any given solution under consideration but also do not require, or assume, any boundary conditions whatsoever. Actually, to derive universal a priori estimates for solutions of some partial differential equations is also one's principal purpose to explore local and global properties of these solutions (see \cite{MR615628, Dancer, MR1946918}).

Let $(M,g)$ be a complete non-compact Riemannian manifold with $\mathrm{Ric}_g\geq-(n-1)\kappa g$. We say an elliptic equation defined on $M$ satisfies Cheng-Yau type logarithmic gradient estimate if for any geodesic ball $B_R(o)\subset M$ and any (positive) solution $v$ of the equation on $B_R(o)$, there holds the above estimate \eqref{equa0}.

After Cheng-Yau's work, gradient estimates for many other equations defined on Riemannian manifolds are established and some corresponding Liouville theorems were derived as the consequences of gradient estimates (see for example \cite{han2023gradient,he2023gradient, MR0834612, MR2880214, MR4559367,MR431040, MR3866881}). This topic has attracted the attention of many mathematicians and there are a great deal of papers devoted to its study. But, there are only a little of papers on the universal boundedness of the solutions and their gradients to some degenerate quasilinear elliptic equation
$$\Delta_p v + f(v,\nabla v)=0 \quad\mbox{in}\,\, M$$
and corresponding parabolic equations on a general Riemannian manifold $(M, g)$ (see \cite{Ser3, CGS}).

\subsection{History and motivation}\

In fact, half century ago some inhomogeneous quasilinear elliptic equations written as
$$\Delta v + f(v,\nabla v)=0 \quad\mbox{in}\,\, \mathbb{R}^n$$
were considered by some mathematicians. For instance, Serrin \cite{Ser3} has ever proved the following beautiful and deep result:

{\em Let $v$ be an entire solution of the above equation. Suppose that $\partial f/\partial v\leq 0$ and that both $v$ and $\nabla v$ are bounded. Then $v$ must be constant.}

Still other Liouville theorems have been obtained for non-negative solutions of the Lane-Emden equation
$$\Delta v + v^r =0, \quad\mbox{in}\,\,\mathbb{R}^n$$
with $r>0$ (note that the previous result does not cover the above equation, since $v^r$ is increasing for $v>0$).
We would like to state another beautiful and deep result of Gidas and Spruck \cite{MR615628}:

{\em Assume $n>2$. Let $v$ be a non-negative solution of $\Delta v+v^r =0$ in an Euclidean space $\mathbb{R}^n $ with $1< r <(n+2)/(n-2)$ (critical Sobolev number). Then $v\equiv 0$.}

Subsequently, Serrin and Zou in \cite{MR1946918} considered the degenerate quasilinear elliptic equations
$$\Delta_p v + f(v)=0 \quad\mbox{in}\,\, \mathbb{R}^n$$
and some related differential inequalities on $\mathbb{R}^n$ and derived  a series of deep results such as some universal a priori estimates of solutions and Liouville theorems. We should mention that Serrin and Zou have slso observed that Liouville theorems can be seen as a consequence and a limiting case of universal boundedness theorems.

More recently, some new connections between Liouville-type theorems and local properties of nonnegative solutions to superlinear elliptic problems were studied, P. Pol\'a$\check{c}$ik, P. Quittner and P. Souplet in \cite{PQS} developed a general method for derivation of universal, pointwise, a priori estimates of local solutions from Liouville-type theorems, which provides a simpler and unified treatment for such questions. The method is based on rescaling arguments combined with a key “doubling” property, and it is different from the classical rescaling method of Gidas and Spruck \cite{MR615628}. As an important heuristic consequence of their approach, it turns out that universal boundedness theorems for local solutions and Liouville-type theorems are essentially equivalent.

Usually, one is interested in several global questions concerning entire solutions in $\mathbb{R}^n$:

(I). Liouville type properties;

(II). Asymptotic behavior and uniqueness;

(III). Symmetry properties.

A natural motivation is how to extend the above considerations (I) and (II) to degenerate quasilinear elliptic equations of the same form, which are defined on a complete Riemannian manifold, since a general manifold is not of abundant symmetry. In particular, one wants to know whether or not Cheng-Yau type gradient estimate holds true with an universal bound.
	
For simplicity, in this article we are concerned with the equation
\begin{equation}\label{equa1}
\begin{cases}
\Delta_pv+a|\nabla v|^qv^r =0, &\quad \text{in}\,\, M;\\
v>0, &\quad \text{in}\,\, M;\\
p>1,~ a, ~q, ~r\in\R, &
\end{cases}
\end{equation}
where $(M,g)$ is a complete Riemannian manifold with Ricci curvature bounded from below. Equation \eqref{equa1} arises from many classical equations
and there are many questions related to equation (\ref{equa1}).
	
The primary objectives of the present paper are twofold: Firstly, we want to establish an exact Cheng-Yau type gradient estimates for positive solutions to equation \eqref{equa1} on a Riemannian manifold with Ricci curvature bounded below. Secondly, as applications we would like to show some global properties of solutions to \eqref{equa1}, for instance, Liouville-type theorems. In addition, we try to extend the ranges of values of $p,\, q$ and $r$, obtained in the previous work, such that the Liouville results on solutions to equation \eqref{equa1} on an Euclidean space hold true. To achieve these goals, we need to develop an adaptive pointwise estimate for the linearized operator of the $p$-Laplace operator and then to provide a new routine to use directly the Nash-Moser iteration method.
	
Now, let's recall some relevant work with the above equation \eqref{equa1}. In the case $a=0$, \eqref{equa1} reduces to the $p$-Laplace equation.  In 2011, Wang and Zhang \cite{MR2880214} proved any positive $p$-harmonic function $v$ on  $(M,g)$ with Ricci curvature $\mathrm{Ric}_g\geq-(n-1)\kappa g$ satisfies
\begin{align*}
\sup_{B_{R/2}(o)}\frac{|\nabla v|}{v}\leq C_{n,p}\frac{1+\sqrt\kappa R}{R}.
\end{align*}
Later on, Sung and Wang \cite{MR3275651} in 2014 proved that the optimal constant $C_{n,p}$ in the above estimate is $(n-1)/(p-1)$. Wang and Zhang extended Cheng-Yau's result for the case $p=2$ (\cite{MR431040}) to any $p>1$ and improved Kotschwar-Ni's results \cite{MR2518892} by weakening sectional curvature condition to the Ricci curvature condition.

Next, we recall some known results for this equation with $a\neq0$, which are closely related to our present situation. Of course, due to the large number of papers dealing with this topic and its generalizations to other problems involving quasilinear elliptic operators, it is not possible to produce here an exhaustive bibliography.
\medskip

{\bf Case (1): $a=1$ and $q=0$}	

In the case $a=1$, $q=0$ and $M=\R^n$, the equation (\ref{equa1}) reduces to the classical Lane-Emden-Fowler equation
\begin{align}\label{equa:1.2}
\begin{cases}
\Delta_pv + v^r=0, \quad \text{in}\,\, \R^n;\\
v>0, \quad \text{in}\,\, \R^n,
\end{cases}
\end{align}
where $r>0$ \text{and}  $1<p<n$. The equation (\ref{equa:1.2}) has already been the subject of countless publications (see \cite{MR1004713, MR1134481, MR982351, MR1121147, MR615628, MR829846, PQS, MR1946918, Ser1, Ser2, Ser3, Sou}). One of the questions solved (see \cite{MR1946918}) is that, if $p<n$, the Liouville property holds (i.e. equation (\ref{equa:1.2}) admits no positive solution) if and only if
	$$
	r\in \left(0,\,\, p^*-1 \right),
	$$
where $p^*=np/(n-p)$ is the Sobolev duality of $p$.

Recently, one also discussed the gradient estimates of positive solutions to Lane-Emden-Fowler equation on a complete manifold $(M,g)$ with $\mathrm{Ric}_g\geq -(n-1)\kappa$, i.e.
\begin{align}\label{equa:1.3}
\begin{cases}
\Delta_pv + av^r=0, \quad &\text{in}\,\, M;\\
v>0,\quad &\text{in}\,\, M.
\end{cases}
\end{align}
In the case $p=2$ and $a>0$, Wang and Wei \cite{MR4559367} derived Cheng-Yau type gradient estimates for positive solutions to \eqref{equa:1.3} under the assumption
	$$
	r\in \left(-\infty,\quad \frac{n+1}{n-1}+\frac{2}{\sqrt{n(n-1)}}\right).
	$$
For any $p>1$ and $a>0$, He, Wang and Wei \cite{he2023gradient} proved that  the Cheng-Yau type gradient estimate holds for positive solutions of \eqref{equa:1.3} when
	$$
	r\in \left(-\infty,\quad \frac{n+3}{n-1}(p-1)\right).
	$$
We refer to \cite{MR3912761, MR3866881} when the $p$-Laplace operator in the equation \eqref{equa:1.3} is replaced by weighted $p$-Laplace Lichnerowicz operator.
\medskip	


{\bf Case (2): $a=-1$ and $r=0$}
		
On the other hand, if $r=0$ and $a= -1$, then (\ref{equa1}) reduces to the so called quasilinear Hamilton-Jacobi equation
\begin{align}\label{hamjoco}
\Delta_p v - |\nabla v|^q = 0.
\end{align}
Lions \cite{MR833413} proved that any $C^2$ solution to (\ref{hamjoco}) on $\R^n$ with $q>1$ and $p=2$ must be a constant. Later,  Bidaut-V\'eron,  Garcia-Huidobro and V\'eron \cite{MR3261111} proved that when $n\geq p>1$ and $q>p-1$, the gradient of the solutions to (\ref{hamjoco}) on $\Omega\subset\R^n$ can be controlled by the distance fuction to the boundary of $\Omega$
\begin{align}\label{jfathma}
|\nabla v(x)|\leq c(n,p,q)(d(x,\partial\Omega))^{-\frac{1}{q+1-p}},\quad \forall x\in\Omega\subset\mathbb{R}^n,
\end{align}
and obtained some Liouville-type theorems. They also extended their results to Riemannian manifolds with sectional curvature bounded from below. Recently, Han, He and Wang in \cite{han2023gradient} proved any solution $v\in C^1(B_R(o))$ of equation \eqref{hamjoco} with $q>p-1$ defined on a geodesic ball $B_R(o)$ of a complete Riemannian manifold $(M,g)$ with $\mathrm{Ric}_g\geq-(n-1)\kappa g$ satisfies
\begin{align}\label{hanhewang}
\sup_{B_{R/2}(o)} |\nabla v|  \leq C(n,p,q)\left(\frac{1+\sqrt{\kappa}R}{R}\right)^{q-p+1}.
\end{align}

{\bf Case (3): $a\neq0$, $q\neq0$ and $r\neq0$}
		
If $a$, $q$ and $r$ are all non-zero, one also discussed a more general case where $p>1$, i.e., the equation
\begin{align}\label{equa6}
\begin{cases}
\Delta_p  v + |\nabla v|^q v^r = 0, \quad &\text{in}\,\, \R^n;\\
v>0,  &\text{in }\, \R^n.
\end{cases}
\end{align}
Provided $n>p>1$, Mitidieri and Pohozaev (Theorem 15.1 in \cite{MR1879326}) proved that if $r>0$, $q\geq0$ and $q+r>p-1$ such that
\begin{align}
r(n-p)+q(n-1)<n(p-1)
\end{align}
holds, then all nonnegative functions $u \in C^1(\mathbb{R}^n)$ satisfying
$$-\Delta_p v \geq |\nabla v|^qv^r\geq 0$$
are constant.
	
Recently, Chang, Hu and Zhang \cite{MR4400071} proved that if $n\geq 2$,  $\,p,\,q$ and $r$ satisfy one of the following two conditions,
\begin{itemize}
\item $0\leq q<p,\, r>0\quad $ and $\quad 1<\frac{q}{p-1}+\frac{r}{p-1}<\frac{n+4}{n}$;
		
\item $1<p<n, \, r\geq 0, \, q\geq0\quad $ and $\quad r(n-p) + q(n-1) < n(p - 1)$,
\end{itemize}
then positive solutions to the equation (\ref{equa6}) are constants, and in some sense their results can be viewed as a complement of Theorem 15.1 in \cite{MR1879326}. Bidaut-V\'eron \cite{MR4234084} obtained some Liouville-type results for (\ref{equa6}) if $n >p> 1,\, r \geq 0$ and $q \geq p$ without the assumption of boundedness on the solution. For more details on \eqref{equa1} on $\R^n$, we refer to \cite{MR2509378, MR2737854, MR3072673, MR1879326} and the references therein.
	
In particular, as $p=2$, in many cases one would like to assume that the superlinearity of the right-hand side (i.e., $q+r-1>0$ and $0 \leq q < 2$) of the equation \eqref{equa1} on an Euclidean space. For more details, we refer to \cite{MR3959864} (or see Subsection 5.1).
	
Besides, they also gave some other regions of $(q,r)$ for which the Liouville property holds in the case $0\leq q<2$, $r\geq 0$ and $q+r>1$.
	
Filippucci, Pucci and Souplet \cite{MR4095468} also proved that any positive bounded classical solution of (\ref{equa1.5}) is identically equal to a constant provided $q \geq 2$ and $r > 0$.
	
As for the equation (\ref{equa1}) defined on a Riemannian manifold, to our knowledge, the only known Liouville type result is due to Sun, Xiao and Xu \cite{MR4498474}. They showed that, under some additional conditions on the volume growth of geodesic ball on $M$, there exists no nontrivial non-negative solution of equation \eqref{equa1} with $a>0$. In fact, they discussed the nonexistence of non-negative solutions to the following differential inequality $\Delta_pv + |\nabla v|^qv^r \leq 0$ in $M$.

\subsection{Main results and some remarks}\

Now we are in the position to state our main results. Firstly, we can establish a Liouville theorem for equation \eqref{equa1} on a non-compact complete Riemannian manifold with non-negative Ricci curvature on the same range of values of $p$, $q$ and $r$ where \cite[Theorem 15.1]{MR1879326} (Liouville theorem) holds (also see \cite{MR4400071}).
	
\begin{thm}\label{t4}
Let $(M,g)$ be complete non-compact manifold with non-negative Ricci curvature. Suppose that $1<p<n$ and $r>0,\, q\geq0,\, q+r>p-1$, and $a>0$. If
\begin{align*}
r+\frac{n-1}{n-p}q<\frac{n(p-1)}{n-p},
\end{align*}
then the problem
\begin{align}\label{veron}
\begin{cases}
\Delta_pv +av^r|\nabla v|^q\leq 0, &\quad\text{in}\,\, M;\\
v > 0, & \quad\text{in}\,\, M
\end{cases}
\end{align}
admits no solution $v\in C^1(M)$.
\end{thm}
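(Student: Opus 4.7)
The plan is to adapt the Mitidieri--Pohozaev integral test-function method of \cite{MR1879326} to the Riemannian setting, replacing the Euclidean volume growth by Bishop--Gromov comparison. Arguing by contradiction, suppose $v\in C^1(M)$ is a positive solution of \eqref{veron}. Fix $o\in M$ and a smooth cutoff $\eta_R\in C^\infty_c(M)$ with $0\le\eta_R\le 1$, $\eta_R\equiv 1$ on $B_R(o)$, $\mathrm{supp}\,\eta_R\subset B_{2R}(o)$ and $|\nabla\eta_R|\le C/R$. I would test the inequality against $\psi=\eta_R^{\lambda}v^{-s}$ for parameters $s>0$ and $\lambda\gg 1$ to be fixed, which after integration by parts produces
\begin{equation*}
s\int_{M} v^{-s-1}|\nabla v|^p\eta_R^{\lambda}+a\int_{M} v^{r-s}|\nabla v|^q\eta_R^{\lambda}\le \lambda\int_{M} v^{-s}|\nabla v|^{p-1}|\nabla\eta_R|\eta_R^{\lambda-1}.
\end{equation*}

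The heart of the argument is a three-factor Young inequality applied to the factorization
\begin{equation*}
v^{-s}|\nabla v|^{p-1}|\nabla\eta_R|\eta_R^{\lambda-1}=\bigl(v^{-s-1}|\nabla v|^p\eta_R^{\lambda}\bigr)^{\alpha}\bigl(v^{r-s}|\nabla v|^q\eta_R^{\lambda}\bigr)^{\gamma}\bigl(|\nabla\eta_R|^{1/\mu}\eta_R^{\lambda-1/\mu}\bigr)^{\mu},
\end{equation*}
with $\mu=1-\alpha-\gamma$ and $(\alpha,\gamma)$ uniquely determined by matching the powers of $v$ and $|\nabla v|$, namely $p\alpha+q\gamma=p-1$ and $(s+1)\alpha-(r-s)\gamma=s$. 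A direct computation should yield the clean identity
\begin{equation*}
\mu=\frac{q+r-(p-1)}{s(q-p)+q+pr},
\end{equation*}
which is positive by the superlinearity $q+r>p-1$. Choosing the Young weight small and $\lambda\ge 1/\mu$ (so that $\eta_R^{\lambda-1/\mu}$ is well-defined and bounded), the first two Young factors are absorbed into the left-hand side; using $|\nabla\eta_R|\le C/R$ then yields
\begin{equation*}
\int_{B_R(o)}v^{-s-1}|\nabla v|^p+\int_{B_R(o)}v^{r-s}|\nabla v|^q\le C R^{-1/\mu}\vol(B_{2R}(o)).
\end{equation*}

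At this stage the hypothesis $\mathrm{Ric}_g\ge 0$ enters via Bishop--Gromov, giving $\vol(B_{2R}(o))\le C R^n$, so the right-hand side is dominated by $C R^{n-1/\mu}$. The arithmetic constraint $r(n-p)+q(n-1)<n(p-1)$ rearranges precisely to $1/\mu>n$ once $s$ is chosen appropriately: any $s>0$ works in the regime $q>p$, while for $0\le q<p$ one must take $s>0$ sufficiently small (the borderline $q=p$ is automatically excluded by combining the hypothesis with $r>0$). Letting $R\to\infty$ forces $\int_{M} v^{-s-1}|\nabla v|^p=0$, so $\nabla v\equiv 0$ and $v$ is a positive constant; this contradicts \eqref{veron} directly when $q=0$, while for $q>0$ it is ruled out by the nonexistence convention of the statement. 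The principal technical obstacle is the joint bookkeeping of $(s,\alpha,\gamma,\mu)$: verifying that $\alpha,\gamma>0$, $\alpha+\gamma<1$ and $1/\mu>n$ are simultaneously achievable across all these regimes, for which the closed-form expression for $\mu$ above is what makes the optimization transparent.
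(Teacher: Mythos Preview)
Your proposal is correct and follows essentially the same route as the paper: test against $\eta^\lambda v^{-s}$ with $s>0$ small (the paper writes this as $\varphi v^\alpha$ with $\alpha<0$), integrate by parts, absorb via Young/H\"older, and close with Bishop--Gromov. The only difference is packaging---where the paper uses a two-factor Young inequality followed by H\"older, you apply a single three-factor Young---and your closed form for $\mu$ makes transparent that $1/\mu>n$ at $s=0^+$ is exactly the hypothesis $r(n-p)+q(n-1)<n(p-1)$, so small $s>0$ works uniformly (your remark that ``any $s>0$'' suffices when $q>p$ is a slight overstatement, since keeping $\gamma>0$ also requires $s$ small, but this does not affect the argument).
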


The above theorem required the restrictions: $q\geq 0$ and $r>0$. Naturally, one would like to ask whether the restrictions: $q\geq 0$ and $r>0$ can be discarded in the above Liouville theorem or not? For this end we need to estimate the gradients of positive solutions. If $p>1$, we obtain the following concise and universal gradient estimate, which can cover or extend some previous results, and give some new conclusions for some related equations:

\begin{thm}\label{thm1}
Let $(M,g)$ be an $n$-dimensional complete Riemannian manifold with Ricci curvature satisfying $\ric_g\geq -(n-1)\kappa g$, where $\kappa$ is a nonnegative constant. Suppose that $p>1$ and $v$ is a $C^1$-positive solution of \eqref{equa1} on a geodesic ball $B_R(o)\subset M$. If $n$, $p$, $q$, $r$ and $a$ satisfy
\begin{align}\label{cond1a}
a\left(\frac{n+1}{n-1}-\frac{q+r}{p-1}\right)\geq 0,
\end{align}
or
\begin{align}\label{cond2a}
1<\frac{q+r}{p-1}<\frac{n+3}{n-1},\quad  \forall a\in \R,
\end{align}
then there holds true
\begin{align}\label{equa:1.8}
\sup_{B_{R/2}(o)} \frac{|\nabla v|}{v}\leq C(n,p,q,r)\frac{1+\sqrt{\kappa}R}{R},
\end{align}
where the constant $C(n,p,q,r)$ depends only on $n$, $p$, $q$ and $r$.
\end{thm}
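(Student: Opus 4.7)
The plan is to adapt the Nash-Moser iteration strategy used by the authors in \cite{he2023gradient, han2023gradient} to the present nonlinearity. I introduce the auxiliary function $f = |\nabla v|^2/v^{2\beta}$, where $\beta$ is a free parameter to be tuned so that the $v$-weights arising from the $a|\nabla v|^q v^r$ term become manageable; heuristically, one picks $\beta$ near the scaling exponent of (\ref{equa1}). The natural operator to apply to $f$ is the linearized $p$-Laplace operator
$$L(\varphi) := \operatorname{div}\bigl(|\nabla v|^{p-2}\nabla\varphi + (p-2)|\nabla v|^{p-4}\langle \nabla v,\nabla\varphi\rangle\nabla v\bigr),$$
which is (locally) uniformly elliptic on the regular set $\{|\nabla v|>0\}$ and respects the divergence structure of $\Delta_p$.

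First I would derive a pointwise differential inequality for $f$. Using the $p$-Bochner formula, the Ricci lower bound $\mathrm{Ric}_g\geq -(n-1)\kappa g$, and the equation $\Delta_p v=-a|\nabla v|^q v^r$, a lengthy but standard computation yields a schematic inequality of the form
$$L(f) \geq C_1\, f^{1+\alpha}\, v^{\gamma} - C_2 \kappa\, f\, v^{\gamma} - (\text{good gradient terms involving }\nabla f),$$
with exponents $\alpha,\gamma$ determined by $p,q,r,\beta$. The positivity of the leading coefficient $C_1$ is precisely the algebraic condition that forces the two hypotheses (\ref{cond1a}) and (\ref{cond2a}): the sharp Kato-type inequality for the Hessian of $v$ contributes the fraction $\frac{n+3}{n-1}$, while the sign of $a$ interacts with the quantity $\frac{n+1}{n-1}-\frac{q+r}{p-1}$ arising from differentiating the nonlinearity. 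Producing this inequality with sharp constants is the main technical obstacle; the argument will proceed by decomposing the Hessian into its components parallel and perpendicular to $\nabla v$ and applying the refined Kato inequality with optimal constant $1/(n-1)$ in the spirit of Sung-Wang.

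Next I run the Moser-type iteration. Multiplying the above inequality by $f^{k}\eta^{2s}$, where $\eta$ is a standard cut-off supported in $B_R(o)$ with $\eta\equiv 1$ on $B_{R/2}(o)$ and $|\nabla \eta|\leq C/R$, and integrating by parts using $L$, one obtains a reverse H\"older inequality
$$\Bigl(\int f^{\chi k}\eta^{2s_1}\Bigr)^{1/\chi}\leq C(k)\,R^{-2}\int f^{k}\eta^{2s_2}+\text{l.o.t.},$$
with some fixed $\chi>1$ supplied by the Sobolev/Nash inequality on $B_R(o)$; here the Laplacian comparison $\Delta r\leq (n-1)\sqrt{\kappa}\coth(\sqrt{\kappa}r)$ enters to control $\Delta\eta$ and to guarantee the volume-doubling needed for Sobolev. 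Iterating over $k=k_0\chi^j$ on a shrinking family of concentric balls between $B_{R/2}(o)$ and $B_R(o)$ yields an $L^\infty$-versus-$L^{k_0}$ bound for $f$ on $B_{R/2}(o)$.

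Finally, I close the estimate by a feedback argument: testing the original PDE against $v^{-\mu}\eta^{2s}$ controls $\int f^{k_0}$ in terms of $R^{-2k_0}(1+\sqrt{\kappa}R)^{2k_0}$ up to constants, and combining this with the Moser bound yields the desired sup-estimate $|\nabla v|/v\leq C(n,p,q,r)(1+\sqrt{\kappa}R)/R$. I anticipate two principal difficulties: (i) making the leading coefficient in the pointwise inequality strictly positive under the sharp range $\frac{q+r}{p-1}<\frac{n+3}{n-1}$, which is the hard algebraic step; and (ii) ensuring the constants produced by iteration remain summable and independent of $v$, which forces the particular choice of $\beta$ that cancels the $v$-weights throughout the iteration.
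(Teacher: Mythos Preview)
Your overall architecture---a Bochner-type inequality for the linearized $p$-Laplacian followed by Moser iteration and a closing integral bound---matches the paper's. But the way you allocate the free parameter is off, and this is exactly where the sharp range $(n+3)/(n-1)$ is won or lost.

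The paper does \emph{not} keep $\beta$ free: it takes the logarithmic substitution $u=-(p-1)\log v$ (your $\beta=1$) and works with $f=|\nabla u|^2$, so no $v$-weights survive and no ``cancellation of weights'' is needed at the end. The genuine free parameter is instead the exponent $\alpha$ in $\mathcal{L}(f^{\alpha})$, and it must be taken \emph{large}. After using the equation to express $\sum_{i\ge 2}u_{ii}$ and completing the square in $u_{11}$, the cross-term $bq\,e^{cu}f^{1+(q-p)/2}u_{11}$ (coming from differentiating the nonlinearity) is absorbed against the $u_{11}^2$-coefficient $(2\alpha-1)(p-1)+(p-1)^2/(n-1)$; the price is a negative contribution of order $1/\alpha$ to the coefficient of $b^2e^{2cu}f^{q-p+2}$. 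Only for $\alpha$ large does this coefficient---the paper's $B_{n,p,q,\alpha}$---remain positive, and only then does the subsequent completion of squares produce the constant $\frac{1}{n-1}-\frac{n-1}{4}\bigl(\frac{q+r}{p-1}-\frac{n+1}{n-1}\bigr)^2$, whose positivity is exactly (\ref{cond2a}). If you attempt a pointwise inequality for $L(f)$ itself (i.e.\ $\alpha=1$) before multiplying by $f^k\eta^{2s}$, the leading coefficient $C_1$ will fail to be positive once $q$ is far from $2(p-1)/(n-1)$, and you will not reach the threshold $(n+3)/(n-1)$. This is precisely the difficulty you flag as item~(i); the resolution is the large-$\alpha$ device, not a choice of $\beta$.

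Two smaller differences: the paper closes the estimate not by testing the original PDE against $v^{-\mu}\eta^{2s}$ but by splitting the integral inequality over $\{f<2a_5t_0^2/(\beta_{n,p,q,r}R^2)\}$ and its complement and absorbing the bad term into $\int f^{\alpha_0+p/2+t}\eta^2$; and it invokes Saloff-Coste's Sobolev inequality directly (which already encodes volume doubling under the Ricci lower bound), so no Laplacian comparison on the cut-off is needed. These are streamlining choices rather than essential, but the first point above is a real gap in your plan as written.
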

	
\begin{rem}\label{cor1}
Let $(M,g)$ be an $n$-dimensional complete Riemannian manifold with Ricci curvature $\ric_g\geq -(n-1)\kappa g$, where $\kappa$ is a non-negative constant.
		\begin{enumerate}
			\item In the case where $a =0$, Wang-Zhang's gradient estimate (see \cite{MR2880214}) can be recovered by the above \thmref{thm1}.
			
			\item In the case $q=0$, suppose that $n$, $p$, $r$ and $a$ satisfy
			\begin{align}\label{cond1}
				a\left(\frac{n+1}{n-1}-\frac{ r}{p-1}\right)\geq 0,
			\end{align}
			or
			\begin{align}\label{cond2}
				1<\frac{ r}{p-1}<\frac{n+3}{n-1},\quad  \forall a\in \R.
			\end{align}
Then, the above estimate (\ref{equa:1.8}) can cover the gradient estimate obtained in \cite{he2023gradient}.
\item In the case $r=0$ and $a=-1$, \thmref{thm1} implies that any positive solution $v$ of the quasi-linear Hamilton-Jacobi equation \eqref{hamjoco} defined on a geodesic ball $B_R(o)\subset M$ satisfies \eqref{equa:1.8} if $q>p-1$.
			
The results due to Han, He and Wang \cite{han2023gradient} demonstrate that any solution $u$ of the quasi-linear Hamilton-Jacobi equation defined on a geodesic ball $B_R(o)\subset M$ satisfies \eqref{hanhewang}. For positive solution $v$ of \eqref{equa1}, $\ln v$ satisfies a different equation (see \eqref{equa2.1}), so the corresponding gradient estimates for $\ln v$ and $v$ are different from each other. However, the necessary condition for both gradient estimates are the same, i.e., $q>p-1$.
\end{enumerate}
\end{rem}
	
By carefully analyzing the conditions \eqref{cond1a} and \eqref{cond2a}  in \thmref{thm1}, it is easy to see that the following results holds.
\begin{cor}\label{cor1.3}
Let $p>1$ and $(M,g)$ be an $n$-dim complete Riemannian manifold with $\ric_g\geq -(n-1)\kappa g$, where $\kappa$ is a non-negative constant. If
		\begin{align}\label{cond3}
			a\geq0 \quad\text{ and }\quad q+r <\frac{n+3}{n-1}(p-1),
		\end{align}
or
\begin{align}\label{cond4}
a\leq 0 \quad\text{ and }\quad q+r>p-1,
\end{align}
then, for any $C^1$-positive solution $v$ of (\ref{equa1}) on a geodesic ball $B_R(o)\subset M$, we have
\begin{align*}
\sup_{B_{R/2}(o)} \frac{|\nabla v|}{v}\leq C(n,p,q,r)\frac{1+\sqrt{\kappa}R}{R}.
\end{align*}
\end{cor}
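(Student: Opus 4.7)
The corollary is a direct consequence of \thmref{thm1}: the plan is to verify that each of the two alternative hypotheses (\ref{cond3}) and (\ref{cond4}) implies at least one of the hypotheses (\ref{cond1a}) or (\ref{cond2a}). Since $p>1$, the quantity $\lambda := (q+r)/(p-1)$ is well-defined, and we may partition the argument according to its position relative to the thresholds $1$, $(n+1)/(n-1)$, and $(n+3)/(n-1)$.

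For (\ref{cond3}), assume $a\geq 0$ and $\lambda < (n+3)/(n-1)$. If $\lambda \leq (n+1)/(n-1)$, then $(n+1)/(n-1) - \lambda \geq 0$, and multiplying by $a \geq 0$ gives (\ref{cond1a}). Otherwise $(n+1)/(n-1) < \lambda < (n+3)/(n-1)$; since $(n+1)/(n-1) > 1$, this forces $1 < \lambda < (n+3)/(n-1)$, which is exactly (\ref{cond2a}). For (\ref{cond4}), assume $a\leq 0$ and $\lambda > 1$. If $\lambda \geq (n+1)/(n-1)$, then $(n+1)/(n-1) - \lambda \leq 0$, and multiplication by $a\leq 0$ again yields (\ref{cond1a}). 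Otherwise $1 < \lambda < (n+1)/(n-1) < (n+3)/(n-1)$, so (\ref{cond2a}) holds.

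In both cases the hypotheses of \thmref{thm1} are satisfied, so the stated estimate holds on $B_{R/2}(o)$ with a constant $C(n,p,q,r)$. There is no substantive obstacle: the only thing to watch is the sign bookkeeping when multiplying the factor $(n+1)/(n-1) - \lambda$ by $a$, which is precisely why the case split is organized by the position of $\lambda$ relative to $(n+1)/(n-1)$. Observe also that the strictness in (\ref{cond3}) and (\ref{cond4}) is exactly what is needed to obtain the strict inequalities demanded by (\ref{cond2a}), so no boundary case is lost.
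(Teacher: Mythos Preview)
Your proof is correct and follows exactly the approach indicated in the paper, which simply states that the corollary follows ``by carefully analyzing the conditions \eqref{cond1a} and \eqref{cond2a} in \thmref{thm1}'' without spelling out the case split. Your explicit verification via the position of $\lambda=(q+r)/(p-1)$ relative to $(n+1)/(n-1)$ is precisely the intended analysis.
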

	
\begin{rem}
Firstly \thmref{thm1} is the Cheng-Yau type estimate for positive solutions of equation \eqref{equa1}. In the case $p=2$ and $a=1$, the assumption \eqref{cond3} is just \eqref{vercondi2}. So, we drop the restriction $q\geq 0$ supposed in Theorem B of \cite{MR3959864} but our results can not cover completely the theorem.
\end{rem}
	
By the gradient estimations obtained in the above, we can get Liouville-type results for such solutions on a Riemannian manifold with non-negative Ricci curvature.
	
\begin{thm}\label{thm1.4}
Let $p>1$ and $(M,g)$ be an $n$-dimensional complete non-compact Riemannian manifold with non-negative Ricci curvature. If
		\begin{align*}
			a\geq0 \quad\text{ and }\quad q+r <\frac{n+3}{n-1}(p-1),
		\end{align*}
or
		\begin{align*}
			a\leq 0 \quad\text{ and }\quad q+r>p-1,
		\end{align*}		
then any $C^1$ smooth positive solution of equation \eqref{equa1} on $M$ is a constant.
\end{thm}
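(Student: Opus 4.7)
My plan is to deduce the Liouville statement directly from the universal gradient estimate recorded in \corref{cor1.3}, which is available because the hypotheses of \thmref{thm1.4} are exactly those of that corollary.

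First, since $(M,g)$ is assumed to have non-negative Ricci curvature, one may take $\kappa=0$ in $\ric_g\geq-(n-1)\kappa g$. Fix any point $o\in M$. Because $M$ is complete and non-compact, the geodesic ball $B_R(o)$ is well-defined for every $R>0$, and by the Hopf-Rinow theorem $M=\bigcup_{R>0}B_R(o)$. Given a $C^1$ positive solution $v$ of \eqref{equa1} on all of $M$, its restriction to any such ball is a $C^1$ positive solution, so \corref{cor1.3} applies on $B_R(o)$.

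Next, under either of the two hypothesis clusters $(a\geq 0,\,q+r<\tfrac{n+3}{n-1}(p-1))$ or $(a\leq 0,\,q+r>p-1)$, the corollary yields
\begin{align*}
\sup_{B_{R/2}(o)} \frac{|\nabla v|}{v}\leq C(n,p,q,r)\,\frac{1+\sqrt{\kappa}R}{R}=\frac{C(n,p,q,r)}{R},
\end{align*}
where the last equality uses $\kappa=0$. For any fixed $x\in M$, choose $R$ so large that $x\in B_{R/2}(o)$; then
\begin{align*}
\frac{|\nabla v(x)|}{v(x)}\leq \frac{C(n,p,q,r)}{R}.
\end{align*}
Letting $R\to\infty$ forces $|\nabla v(x)|=0$, and since $x$ was arbitrary, $\nabla v\equiv 0$ on $M$. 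As $M$ is connected (being a complete Riemannian manifold), $v$ is a constant, which is the desired conclusion.

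There is no real obstacle here beyond verifying that the hypotheses transfer cleanly; the entire content of \thmref{thm1.4} is the letting $R\to\infty$ step, with all difficulty concentrated in \thmref{thm1} and its corollary. The only mild point to note is that the constant $C(n,p,q,r)$ is genuinely independent of $R$ and of the particular solution $v$ (i.e.\ the estimate is universal), which is what permits the limit to be taken; this universality is precisely the feature emphasized in the statement of \thmref{thm1}.
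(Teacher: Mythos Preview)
Your proof is correct and follows essentially the same approach as the paper: set $\kappa=0$, invoke the universal gradient estimate of \corref{cor1.3}, and let $R\to\infty$ to force $\nabla v\equiv 0$. The only cosmetic difference is that the paper centers the ball at the point $x_0$ itself (so $x_0\in B_{R/2}(x_0)$ automatically for every $R>0$), whereas you fix a single center $o$ and enlarge $R$ until $x\in B_{R/2}(o)$; both variants are fine.
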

	
\begin{rem}
To our best knowledge,  \thmref{thm1.4} is the first Liouville type result for  equation (\ref{equa1}) on Riemannian manifolds with non-negative Ricci curvature. Sun-Xiao-Xu \cite{MR4498474} also obtained some Liouville-type results for (\ref{equa1}) under the assumption on volume growth rate of geodesic ball. The classical Cheng-Yau's results on harmonic functions on complete Riemannian manifolds show that the condition on the Ricci tensor in \thmref{thm1.4} is natural.
\end{rem}

\begin{rem}
When $a=1$ and the equation (\ref{equa1}) is defined on $\R^n$, the previous results (see \cite{MR4234084, MR3959864, MR4400071, MR4095468}) are derived by the Bernstein method. The essential difference between the Liouville-type theorems of this paper and the previous Liouville-type theorems is that $q$ and $r$ may be negative in the present situation, however, one needs to assume that $q$ and $r$ are nonnegative in the previous results. Moreover, we also consider the case where $a<0$ in \thmref{thm1}.
\end{rem}
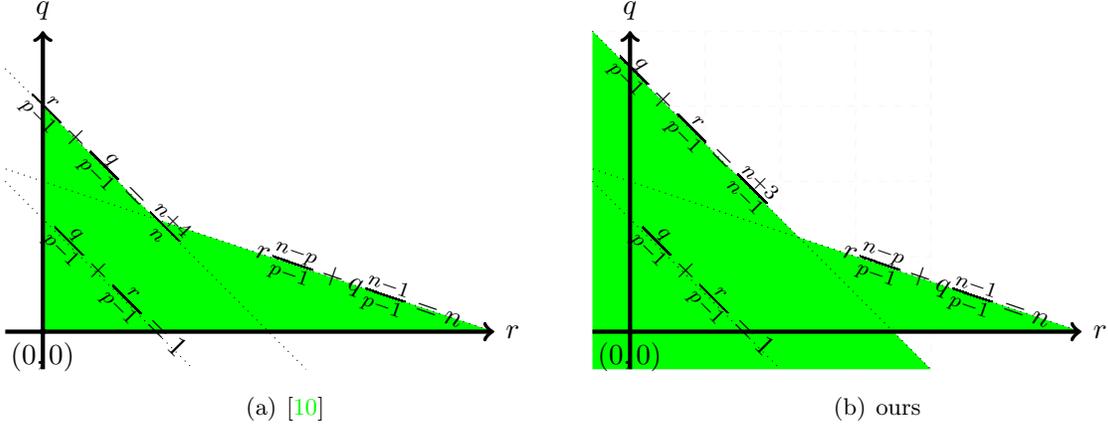
\begin{figure}[ht]\label{fig1}
\centering
		\subfigure[\cite{MR4400071}]{
			\begin{minipage}[t]{0.48\linewidth}
				\begin{tikzpicture}
					\path[fill=green](0, 0)--(6,0)--(0, 2);
					\path[fill=green](0, 3)--(0, 1.5)--(1.5, 0)--(9/4, 0)--(9/4, 3/4);
				\draw[dotted] (-0.5, 3.5) node[anchor=north west]{\rotatebox{315}{$ \frac{r}{p-1} +  \frac{q}{p-1} = \frac{n+4}{n} $} } -- (3.5, -0.5);
				\draw[dotted]
				(-0.5, 2)--(-0.2, 1.7) node[anchor=north west]{\rotatebox{315}{$\frac{q}{p-1}+\frac{r}{p-1}=1$}} --(2, -0.5);
				\draw[dotted] (6, 0) -- (-0.5, 13/6);
					\filldraw[black] (0,0) circle (1pt) node[anchor=north]{$(0,0)$};
					\draw (4.2,0.6) node[] {\rotatebox{341}{$r\frac{n-p}{p-1} + q\frac{n-1}{p-1} = n $}};
					\draw[->,ultra thick] (-0.5,0)--(6,0) node[right]{$r$};
					\draw[->,ultra thick] (0,-0.5)--(0,4) node[above]{$q$};
				\end{tikzpicture}			
			\end{minipage}%
		}%
\subfigure[ours]{
\begin{minipage}[t]{0.48\linewidth}
\begin{tikzpicture}
					\draw[help lines, color=red!5, dashed] (-0.5,-0.5) grid (4,4);
					\path[fill=green](-0.5, 4)--(-0.5,-0.5)--(4, -0.5);
					\path[fill=green](0, 0)--(6,0)--(0, 2);
					\draw[dotted]
					(-0.5, 4)node[anchor=north west]{\rotatebox{315}{$\frac{q}{p-1}+\frac{r}{p-1}=\frac{n+3}{n-1}$}} --(4, -0.5);
					\draw[dotted]
					(-0.5, 2)--(-0.2, 1.7) node[anchor=north west]{\rotatebox{315}{$\frac{q}{p-1}+\frac{r}{p-1}=1$}} --(2, -0.5);
						\draw[dotted] (6, 0) -- (-0.5, 13/6);
					\filldraw[black] (0,0) circle (1pt) node[anchor=north]{$(0,0)$};
					\draw (4.2,0.6) node[] {\rotatebox{341}{$r\frac{n-p}{p-1} + q\frac{n-1}{p-1} = n $}};
					\draw[->,ultra thick] (-0.5,0)--(6,0) node[right]{$r$};
					\draw[->,ultra thick] (0,-0.5)--(0,4) node[above]{$q$};
\end{tikzpicture}			
\end{minipage}
		}%
\centering
\caption{ The region of $(q,r)$ where Liouville result holds when $n=4$ and $p=3$.}
\end{figure}
	
\begin{rem}
In the case where $a=1$, $q\geq 0$ and $r\geq 0$, even for the equation (\ref{equa1}) which is defined on $\R^n$, the results obtained here are new. For the Liouville-type results established in the present paper, there is no upper bound restriction on $p$ while one always need to assume that $p<n$ in the previous all results. In the case $p=2$ and $a>0$, the range of $(q,r)$ in \thmref{thm1.4} is just one of the conditions given in \cite{MR3959864}, we extend one of the Liouville results proved for the case $p=2$ in \cite{MR3959864} to the case $p>1$. Our results (\thmref{thm1.4} \thmref{t4}) and the Liouville type results in  \cite{MR4234084} can not cover each other, but our results can completely cover the Liouville results in \cite{MR4400071, MR4095468}. In the case $n=4$ and $p=3$, the above Figure $1$ shows the regions of $(q,r)$ where the Liouville result holds by different methods.
\end{rem}
	
\begin{thm}\label{thm1.5}
Let $(M,g)$ be a complete non-compact Riemannian manifold with Ricci curvature $\ric_g\geq-(n-1)\kappa g$, where $\kappa$ is a non-negative constant. Suppose $v\in C^1(B_R(o))$ is a positive solution to equation \eqref{equa1}, defined on a geodesic ball $B_R(o)\subset M$, with $a$, $p$, $q$ and $r$ satisfying \eqref{cond3} or \eqref{cond4}. Then, for any $x, y\in B_{R/2}(o)$ there holds
$$
v(x)/v(y)\leq  e^{C(n,p,q,r)(1+\sqrt{\kappa}R)}.
$$
Moreover, if $v\in C^1(M)$ is an entire positive solution to equation \eqref{equa1} with $a$, $p$ and $q$ satisfying \eqref{cond3} or \eqref{cond4} in $M$, then, for any $x,\, y\in M$ there holds
$$
v(x)/v(y)\leq  e^{C(n,p,q,r)\sqrt{\kappa}d(x,y)},
$$
where $d(x,y)$ is the geodesic distance between $x$ and $y$.
\end{thm}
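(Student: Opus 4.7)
The plan is to derive both Harnack-type estimates as direct consequences of the pointwise logarithmic gradient bound already established in Corollary~\ref{cor1.3}. Under hypothesis \eqref{cond3} or \eqref{cond4}, that corollary applies to $v$ on $B_R(o)$ and supplies the pointwise bound
\[
|\nabla \log v| = \frac{|\nabla v|}{v} \leq C(n,p,q,r)\,\frac{1+\sqrt{\kappa}R}{R}
\]
everywhere on $B_{R/2}(o)$. From here the Harnack inequality is just a line integral of this bound along a suitable path, with the only wrinkle being the choice of path.

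For the local statement, I would connect two points $x,y \in B_{R/2}(o)$ through the origin: take the minimizing geodesic from $x$ to $o$ followed by the one from $o$ to $y$. Along a minimizing geodesic from $z \in B_{R/2}(o)$ to $o$, every intermediate point has distance to $o$ no greater than $d(z,o) < R/2$, so the concatenated path stays entirely inside $B_{R/2}(o)$, where the gradient bound is valid. The total arclength is bounded by $d(x,o)+d(o,y)<R$, so integrating yields $|\log v(x)-\log v(y)| \leq C(n,p,q,r)(1+\sqrt{\kappa}R)$, and exponentiating gives the first claim.

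For the global statement with $v \in C^{1}(M)$, I would apply Corollary~\ref{cor1.3} on arbitrarily large balls. For each fixed $z\in M$ and each $R>0$, the corollary on $B_{R}(z)$ forces $|\nabla \log v|(z) \leq C(n,p,q,r)\bigl(R^{-1}+\sqrt{\kappa}\bigr)$. Letting $R\to\infty$ upgrades this to the global bound $|\nabla \log v|\leq C(n,p,q,r)\sqrt{\kappa}$ throughout $M$. Integrating along any minimizing geodesic from $x$ to $y$ (of length exactly $d(x,y)$) gives $|\log v(x)-\log v(y)| \leq C(n,p,q,r)\sqrt{\kappa}\,d(x,y)$, and exponentiation yields the second claim. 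As a sanity check, the case $\kappa=0$ recovers Theorem~\ref{thm1.4}: the global gradient bound forces $v$ to be constant, so the inequality becomes the trivial identity $v(x)/v(y)=1$.

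I do not anticipate a serious obstacle, since all the analytic work is packaged into Corollary~\ref{cor1.3}. The only point requiring a little care is that a direct minimizing geodesic between $x$ and $y$ can exit $B_{R/2}(o)$ when the ball fails to be convex, so one cannot integrate the gradient bound along it. Routing through $o$ circumvents this at the cost of only a factor of two in the path length, which is absorbed into $C(n,p,q,r)$. Everything else is a routine fundamental-theorem-of-calculus argument along a Lipschitz curve.
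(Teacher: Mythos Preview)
Your proposal is correct and follows essentially the same route as the paper: integrate the logarithmic gradient bound from Corollary~\ref{cor1.3} along minimizing geodesics through the center $o$ for the local estimate, and pass $R\to\infty$ before integrating along a minimizing geodesic for the global one. Your explicit remark that a direct $x$-to-$y$ geodesic may leave $B_{R/2}(o)$, and hence routing through $o$ is needed, is a clarification the paper leaves implicit.
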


\begin{rem}
When $a=1$ and the equation (\ref{equa1}) is defined on $\R^n$, we can recover Theorem A in \cite{MR3959864} by making use of the above Harnack inequality and the arguments in \cite{BL}, for the details we refer to P.1493 of \cite{MR3959864}.
\end{rem}

\begin{thm}\label{thm1.6}
Let $(M,g)$ be a complete non-compact Riemannian manifold with Ricci curvature $\ric_g\geq-(n-1)\kappa g$, where $\kappa$ is a non-negative constant. Suppose $v\in C^1(B_R(o)\setminus\{0\})$ is a positive solution to equation \eqref{equa1} with $a$, $p$ and $q$ satisfying \eqref{cond3} or \eqref{cond4}.

1. If $v$ satisfies in $B_R(o)\setminus\{0\}$ that

\begin{equation}\label{tt}
v(x)\leq cd(x,0)^{-\lambda}
\end{equation}
for some constant $c$ and some exponent $\lambda > 0$, then there exists $c_1=c_1(n,p,q,r,c)$ such that
$$|\nabla v(x)|\leq c_1(1+\kappa d(x, 0))d(x, o)^{-(\lambda+1)},\quad \forall x\in B_{R/2}(o).$$

2. If $v$ satisfies \eqref{tt} in $M\setminus B_R(o)$ for some constant $c$ and some exponent $\lambda > 0$, then there exists $c_1=c_1(n,p,q,r,c)$ such that
$$|\nabla v(x)|\leq c_1(1+\kappa d(x, 0))d(x, o)^{-(\lambda+1)}, \quad \forall x\in M\setminus B_{2R}(o).$$
\end{thm}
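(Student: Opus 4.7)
The plan is to reduce the desired pointwise bounds to the Cheng--Yau type estimate of \thmref{thm1} (equivalently, \corref{cor1.3}) applied on balls whose radius is proportional to the distance from the singular point (treating the ``0'' in \eqref{tt} as the base point $o$). Fix a point $x$ and set $\rho=d(x,o)$. For part~1, where $x\in B_{R/2}(o)\setminus\{o\}$ so that $\rho\leq R/2$, I work on the ball $B_{\rho/2}(x)$; for part~2, where $x\in M\setminus B_{2R}(o)$ so that $\rho\geq 2R$, I use the same choice $B_{\rho/2}(x)$. In both cases the triangle inequality gives $\rho/2 < d(y,o) < 3\rho/2$ for every $y\in B_{\rho/2}(x)$, which places $B_{\rho/2}(x)$ inside the region where the hypothesis \eqref{tt} is available and inside the domain of $v$ (it lies in $B_R(o)\setminus\{o\}$ in case~1, and in $M\setminus B_R(o)$ in case~2).

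On $B_{\rho/2}(x)$ the hypothesis \eqref{tt} then yields
\[
v(y)\leq c\,d(y,o)^{-\lambda}\leq c\,(\rho/2)^{-\lambda}=2^{\lambda}c\,\rho^{-\lambda}.
\]
Since $(a,p,q,r)$ satisfies \eqref{cond3} or \eqref{cond4}, \corref{cor1.3} applies to $v$ on $B_{\rho/2}(x)$ and produces
\[
\frac{|\nabla v(x)|}{v(x)}\leq\sup_{B_{\rho/4}(x)}\frac{|\nabla v|}{v}\leq C(n,p,q,r)\,\frac{1+\sqrt{\kappa}\,\rho/2}{\rho/2}\leq C'(n,p,q,r)\,\frac{1+\sqrt{\kappa}\,\rho}{\rho}.
\]
Multiplying by $v(x)\leq 2^{\lambda}c\,\rho^{-\lambda}$ delivers
\[
|\nabla v(x)|\leq c_1(n,p,q,r,c)\,(1+\sqrt{\kappa}\,\rho)\,\rho^{-(\lambda+1)},
\]
which is exactly the asserted bound (the factor $1+\kappa\,d(x,o)$ in the statement appears to be a mild typo for $1+\sqrt{\kappa}\,d(x,o)$, consistent with the form of the universal gradient estimate).

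Both parts of the theorem are handled by the same calculation; the only point requiring a moment's thought is the verification that $B_{\rho/2}(x)$ sits inside the appropriate region, and this is immediate from the triangle inequality under the hypotheses $\rho\leq R/2$ and $\rho\geq 2R$ respectively. There is no substantial obstacle: the heavy lifting has already been done by the universal gradient estimate of \thmref{thm1}. The only genuine input here is the observation that, near an isolated singularity or out at spatial infinity, the scale on which the gradient estimate is applied must be chosen commensurately with the distance to the singular set, and that the pointwise growth bound \eqref{tt} then converts the logarithmic gradient estimate into the claimed pointwise estimate on $|\nabla v|$.
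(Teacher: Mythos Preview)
Your proposal is correct and is precisely the intended argument. The paper does not actually write out a separate proof of \thmref{thm1.6}; it is stated as an immediate consequence of the Cheng--Yau type estimate in \thmref{thm1}/\corref{cor1.3}, and your derivation---applying the estimate on the ball $B_{\rho/2}(x)$ with $\rho=d(x,o)$, checking via the triangle inequality that this ball lies in the admissible region, and then multiplying through by the growth bound \eqref{tt}---is exactly that. Two minor remarks: you only need $v(x)\leq c\rho^{-\lambda}$ at the single point $x$, so the intermediate bound $v(y)\leq 2^{\lambda}c\rho^{-\lambda}$ on the whole ball is unnecessary (though harmless); and your observation that $1+\kappa\,d(x,o)$ should read $1+\sqrt{\kappa}\,d(x,o)$ is consistent with the form of the gradient estimate used throughout the paper.
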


\subsection{Applications to singularity analysis in $\mathbb R^n$.}\

Now we consider the case of equation \eqref{equa1} defined on a domain of $\Omega\subset\mathbb R^n$. Assume that \eqref{cond3} or \eqref{cond4} is satisfied, and $v$ is of possible singularities. Firstly, we can obtain the following asymptotic estimate near a singular point.

\begin{thm}\label{t7}
(i). Let $\Omega\subset\mathbb{R}^n$ be a domain containing $0$ and $p=2$. Assume that $a$, $q$ and $r$ fulfill \eqref{cond3}, i.e.,
$$a\geq 0 \quad\text{ and }\quad q+r <\frac{n+3}{n-1}.$$
If $v\in C^2(\Omega\setminus\{0\})$ is a positive solution of \eqref{equa1} in $\Omega\setminus\{0\}$, then, in the case $n\geq3$ the following estimate holds in a neighborhood of $0$
\begin{equation*}\label{Ver}
v(x)+|x||\nabla v(x)| \leq c|x|^{2-n},
\end{equation*}
where $c$ is a constant depending on $u$; and in the case $n=2$ there holds in a neighborhood of $0$
\begin{equation*}\label{Ver}
v(x)+|x||\nabla v(x)| \leq c\log\frac{1}{|x|},
\end{equation*}
where $c$ is a constant depending on $v$.

(ii). Let $\Omega\subset\mathbb{R}^n$ be a domain containing $0$ and $p=2$. Assume that $a$, $q$ and $r$ fulfill \eqref{cond4}, i.e.,
$$a\leq 0 \quad\text{ and }\quad q+r>p-1.$$
If $v\in C^2(\Omega\setminus\{0\})$ is a positive solution of \eqref{equa1} in $\Omega\setminus\{0\}$, then, in the case $n\geq3$ there holds in a neighborhood of $0$ with a constant $c$ depending on $v$
\begin{equation*}\label{Ver}
v(x) \geq c|x|^{2-n};
\end{equation*}
and in the case $n=2$ there holds in a neighborhood of $0$ with a constant $c$ depending on $v$
\begin{equation*}\label{Ver}
v(x) \geq c\log\frac{1}{|x|}.
\end{equation*}
\end{thm}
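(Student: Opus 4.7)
The plan is to combine three tools established earlier in the paper: the log-gradient estimate of Corollary~\ref{cor1.3} applied with $\kappa=0$, the Harnack inequality of Theorem~\ref{thm1.5}, and a one-dimensional monotonicity identity for the spherical average of $v$ coming directly from the sign of $\Delta v$. For any $x$ close to $0$ in $\Omega\setminus\{0\}$ the Euclidean ball $B_{|x|/2}(x)$ still lies in $\Omega\setminus\{0\}$, so Corollary~\ref{cor1.3} (with $\kappa=0$, $R=|x|/2$) yields $|\nabla v(y)|\leq C|y|^{-1}v(y)$ on $B_{|x|/4}(x)$, and Theorem~\ref{thm1.5} gives $v(y_1)\leq e^{C}v(y_2)$ for $y_1,y_2$ in that same sub-ball. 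Covering each sphere $\partial B_r(0)$ by an $O(1)$-length chain of such balls centered on the sphere then propagates this into a scale-invariant Harnack comparison $\sup_{\partial B_r(0)}v\leq C_H\inf_{\partial B_r(0)}v$ with $C_H=C_H(n,p,q,r)$ independent of $r$.

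For part (i), the hypothesis $a\geq 0$ gives $\Delta v=-av^r|\nabla v|^q\leq 0$, so $v$ is superharmonic on $\Omega\setminus\{0\}$. Introduce the spherical average
$$h(r):=\frac{1}{|\partial B_r(0)|}\int_{\partial B_r(0)}v\,dS.$$
Differentiating under the integral (valid since the spheres miss $0$) gives $(r^{n-1}h'(r))'=r^{n-1}\overline{\Delta v}(r)\leq 0$, so $r^{n-1}h'(r)$ is non-increasing on some $(0,r_0]$ and is therefore bounded below by the finite constant $-C:=-r_0^{n-1}h'(r_0)$. Integrating the resulting inequality $h'(r)\geq -Cr^{1-n}$ from $r$ to $r_0$ produces $h(r)\leq c|x|^{2-n}$ when $n\geq 3$ and $h(r)\leq c\log(1/r)$ when $n=2$. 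The uniform Harnack comparison then upgrades this average bound to the pointwise bound $v(x)\leq C_Hh(|x|)\leq c'|x|^{2-n}$, and the log-gradient estimate finally promotes it to $|x||\nabla v(x)|\leq Cv(x)$, completing part (i).

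For part (ii), $a\leq 0$ reverses the sign, so $v$ is subharmonic and $r^{n-1}h'(r)$ becomes non-decreasing in $r$. If $v$ is bounded near $0$ the asserted lower bound is automatic by choosing $c$ appropriately; otherwise the Harnack chain forces $h(r)\to\infty$ as $r\to 0$, which is incompatible with $h'\geq 0$ near $0$. Hence there is some $r_1\in(0,r_0]$ with $h'(r_1)<0$, and monotonicity gives $r^{n-1}h'(r)\leq r_1^{n-1}h'(r_1)=:-c_0<0$ for all $r\leq r_1$. Integrating in the reverse direction produces $h(r)\geq c|x|^{2-n}$ in $n\geq 3$ and $h(r)\geq c\log(1/r)$ in $n=2$; the lower half of the Harnack comparison, $\inf_{\partial B_r}v\geq C_H^{-1}h(r)$, then yields the claimed pointwise lower bound on $v$.

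The main technical point is to verify that the Harnack constant on each inner ball and the number of balls in the Harnack chain around $\partial B_r(0)$ are both independent of $r$, so that $C_H$ is genuinely uniform. The first follows from scale invariance of Theorem~\ref{thm1.5} when $\kappa=0$, and the second from a standard volumetric covering argument, so no serious analytic difficulty is hidden there. The monotonicity identity $(r^{n-1}h'(r))'=r^{n-1}\overline{\Delta v}(r)$ is likewise routine under $v\in C^2(\Omega\setminus\{0\})$. With these steps in place, both the upper and the lower estimates reduce to the ODE analysis sketched above.
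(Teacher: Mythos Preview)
Your approach is the same as the paper's: use the sign of $a$ to make $v$ super/subharmonic, control the spherical average $h(r)$ through the identity $(r^{n-1}h')'=r^{n-1}\overline{\Delta v}$, upgrade to a pointwise bound via the Harnack inequality of Theorem~\ref{thm1.5}, and then absorb $|x||\nabla v|$ using the log-gradient estimate. Your write-up is in fact more explicit than the paper's (which simply asserts the bound on $\bar v(r)$ and the sphere-wise Harnack inequality without spelling out the chain-of-balls covering), but the substance is identical.

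Two remarks. In part (i) there is a harmless sign slip: since $r^{n-1}h'(r)$ is non-increasing on $(0,r_0]$ it is bounded below by $r_0^{n-1}h'(r_0)$, not by its negative; the integration you perform afterwards is correct once this is adjusted. In part (ii), however, your claim that the bounded case is ``automatic by choosing $c$ appropriately'' does not hold: a bounded positive $v$ cannot satisfy $v(x)\geq c|x|^{2-n}$ near $0$ for any $c>0$, and bounded solutions do exist (e.g.\ $v\equiv 1$ whenever $q>0$). The paper's own proof is no more careful at this point---it writes only ``there exists some $m\geq 0$''---so the stated lower bound is evidently meant to allow $c=0$ when $v$ stays bounded; you should phrase your bounded case the same way rather than claim a positive constant.
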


\begin{rem}
It is not difficult to find that the conclusion (i) in the above theorem covers the conclusion stated in Theorem A in \cite{MR3959864} (see Subsection 5.1 of this article).
\end{rem}

Secondly, we can give the bound of the gradient of logarithm of the positive solution at any point $x\in\Omega$ by its distance to the boundary.

\begin{cor}\label{tui1}
If $v$ is a positive solution of \eqref{equa0}, then for any $x\in\Omega$ there holds
\begin{align}
|\nabla\ln v(x)|\leq C(n,p,q,r)d(x, \partial\Omega)^{-1}.
\end{align}
\end{cor}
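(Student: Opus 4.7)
The plan is to obtain this as an immediate localization of the universal interior gradient estimate in \thmref{thm1} (equivalently \corref{cor1.3}) to the Euclidean setting. Since $\Omega\subset\mathbb{R}^n$ inherits the flat metric, the Ricci curvature vanishes, so we may take $\kappa=0$ in the bound \eqref{equa:1.8}. The only freedom left is the choice of ball centred at the point of interest, and to make the estimate sharp at $x$ we should take its radius to be exactly $d(x,\partial\Omega)$.

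Concretely, fix $x\in\Omega$ and set $R:=d(x,\partial\Omega)>0$. Then the Euclidean ball $B_R(x)$ lies in $\Omega$, and the restriction of $v$ to $B_R(x)$ is a $C^1$ positive solution of \eqref{equa1}. Since the hypotheses on $a,p,q,r$ (namely \eqref{cond3} or \eqref{cond4}) are in force by assumption, \corref{cor1.3} applied on $B_R(x)\subset\mathbb{R}^n$ with $\kappa=0$ gives
$$\sup_{B_{R/2}(x)}\frac{|\nabla v|}{v}\leq \frac{C(n,p,q,r)}{R}.$$
Since $x$ is the centre of $B_{R/2}(x)$, evaluating at $y=x$ and using $|\nabla\ln v|=|\nabla v|/v$ yields
$$|\nabla\ln v(x)|\leq C(n,p,q,r)\,d(x,\partial\Omega)^{-1},$$
which is the desired estimate.

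There is no real obstacle: the corollary is essentially a one-line application of the already established \thmref{thm1}, together with the standard trick of centring the ball at the evaluation point and taking its radius equal to the boundary distance. The entire content of the estimate is encoded in the universality of the constant in \thmref{thm1}: it depends only on $n,p,q,r$ and not on any further geometric data of $\Omega$ or on $v$ itself, which is precisely what makes the $d(x,\partial\Omega)^{-1}$ scaling possible here.
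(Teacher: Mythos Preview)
Your proof is correct and follows essentially the same route as the paper: set $R=d(x,\partial\Omega)$, apply the interior gradient estimate of \thmref{thm1} (or \corref{cor1.3}) on $B_R(x)\subset\Omega\subset\mathbb{R}^n$ with $\kappa=0$, and evaluate the resulting bound on $B_{R/2}(x)$ at the center $x$.
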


By the above corollary, we can estimate the order of singularity of the solution in a punctured domain.
\begin{cor}\label{tui2}
Let $\Omega$ be a domain containing $0$ and $R>0$ be a positive number such that $d(0,\partial\Omega) \geq2R$. Assume that $v\in C^2(\Omega\setminus\{0\})$ is a positive solution to equation \eqref{equa0}. Then, for any $x\in B_{R}\setminus \{0\}$ we have
\begin{align}
v(x)\leq \sup_{|y|=R} v(y)\cdot \left(\frac{R}{|x|}\right)^{C(n,p,q,r)}.
\end{align}
\end{cor}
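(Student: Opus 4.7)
The plan is to derive this corollary as a straightforward integration of the logarithmic gradient bound provided by Corollary~\ref{tui1}, applied on the punctured domain $\Omega\setminus\{0\}$. Since $\kappa = 0$ in $\mathbb{R}^n$ and Theorem~\ref{thm1} is local (it only requires that $v$ be a positive solution on a ball contained in the domain), the estimate $|\nabla\ln v(x)|\leq C\,d(x,\partial D)^{-1}$ holds for any domain $D$ on which $v$ solves the equation. I would simply apply this with $D=\Omega\setminus\{0\}$.

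First I would identify the distance to the boundary at a point $x\in B_R\setminus\{0\}$. The boundary of the punctured domain is $\partial\Omega\cup\{0\}$, and from the hypothesis $d(0,\partial\Omega)\geq 2R$ together with $|x|<R$ we get $d(x,\partial\Omega)\geq 2R-|x|>R>|x|$. Consequently
\begin{equation*}
d\bigl(x,\partial(\Omega\setminus\{0\})\bigr)=\min\{d(x,\partial\Omega),\,|x|\}=|x|,
\end{equation*}
which yields the pointwise bound $|\nabla\ln v(x)|\leq C(n,p,q,r)/|x|$ for every $x\in B_R\setminus\{0\}$.

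Next I would integrate this bound along a radial ray. Fix $x\in B_R\setminus\{0\}$, set $\omega=x/|x|$ and $y_{\ast}=R\omega\in\partial B_R$, and parametrise $\gamma(t)=t\omega$ for $t\in[|x|,R]$. Since $\gamma(t)\in B_R\setminus\{0\}$ throughout, the chain rule and the previous step give
\begin{equation*}
\Bigl|\tfrac{d}{dt}\ln v(\gamma(t))\Bigr|\leq |\nabla\ln v(\gamma(t))|\leq \frac{C(n,p,q,r)}{t}.
\end{equation*}
Integrating from $|x|$ to $R$ yields $\ln v(x)-\ln v(y_{\ast})\leq C(n,p,q,r)\ln(R/|x|)$, and exponentiating then taking the supremum over $|y|=R$ produces the desired inequality.

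I do not expect any genuine obstacle here: the only point that deserves a sentence of care is the justification that Corollary~\ref{tui1} can be invoked on the punctured domain, which follows because Theorem~\ref{thm1} is a purely local estimate and because the hypothesis $d(0,\partial\Omega)\geq 2R$ forces the boundary distance inside the punctured domain to be realised at the origin throughout $B_R\setminus\{0\}$.
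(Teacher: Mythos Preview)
Your proposal is correct and follows essentially the same route as the paper. The paper also integrates the logarithmic gradient bound along the radial segment from $x$ to $y=\tfrac{R}{|x|}x$; the only cosmetic difference is that the paper applies Corollary~\ref{tui1} on the ball $B_R(y)\subset\Omega\setminus\{0\}$ (which touches $0$ on its boundary) rather than on $\Omega\setminus\{0\}$ itself, but along that segment the two boundary distances coincide with $|\,\cdot\,|$, so the computations are identical.
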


\begin{cor}\label{tui3}
Let $\Omega$ be a bounded domain with a $C^2$ boundary. Assume that $v\in C^2(\Omega\setminus\{0\})$ is a positive solution to equation \eqref{equa0}. Then, there exists $\delta^*>0$ such that, for any $x\in \Omega_{\delta^*}$,  where $\Omega_{\delta^*}:=\{z\in\Omega:d(z, \partial\Omega)\leq\delta^*\}\subset\Omega$, the following is true
\begin{align*}
v(x)\leq \sup_{d(z^*,\, \partial\Omega)=\delta^*}  v(z^*)\left(\frac{\delta^*}{d(x,\partial\Omega)} \right)^{C(n,p,q,r)}.
\end{align*}
\end{cor}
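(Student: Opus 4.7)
The plan is to deduce this corollary from the logarithmic gradient bound of Corollary~\ref{tui1} by integrating along the inward normal direction from the boundary. Since $\partial\Omega$ is $C^2$ and compact, there is a positive constant $\delta_0$ (the reach of $\partial\Omega$) such that the inward normal exponential map $(y,t)\mapsto y+t\,\nu(y)$ is a diffeomorphism from $\partial\Omega\times[0,\delta_0]$ onto a tubular neighborhood of $\partial\Omega$, where $\nu(y)$ denotes the inner unit normal. I would take
$$\delta^*\ :=\ \min\!\left\{\delta_0,\ \tfrac12\,d(0,\partial\Omega)\right\},$$
so that $\Omega_{\delta^*}$ is a tubular neighborhood of $\partial\Omega$ which is disjoint from the singular point $\{0\}$; in particular, for every $x\in\Omega_{\delta^*}$ the unique nearest boundary point $\pi(x)\in\partial\Omega$ is well defined and $d(x,\partial\Omega)\le d(x,0)$, so that Corollary~\ref{tui1} yields $|\nabla\ln v(x)|\le C(n,p,q,r)\,d(x,\partial\Omega)^{-1}$ at every such $x$.

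Given $x\in\Omega_{\delta^*}$, I would consider the inward normal curve
$$\gamma(t)\ :=\ \pi(x)+t\,\nu(\pi(x)),\qquad t\in[d(x,\partial\Omega),\,\delta^*].$$
By the tubular neighborhood property, $d(\gamma(t),\partial\Omega)=t$ along the whole curve, $\gamma(d(x,\partial\Omega))=x$, and the endpoint $z^*:=\gamma(\delta^*)$ lies on the level set $\{d(\cdot,\partial\Omega)=\delta^*\}$, so $v(z^*)$ is controlled by the supremum appearing in the statement.

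Applying Corollary~\ref{tui1} (with $\kappa=0$) at each point of $\gamma$ and using $|\gamma'|\equiv 1$, one obtains
\begin{align*}
\bigl|\ln v(x)-\ln v(z^*)\bigr|
&\le \int_{d(x,\partial\Omega)}^{\delta^*}\!\bigl|\nabla\ln v(\gamma(t))\bigr|\,dt \\
&\le \int_{d(x,\partial\Omega)}^{\delta^*}\!\frac{C(n,p,q,r)}{t}\,dt \\
&= C(n,p,q,r)\,\ln\!\frac{\delta^*}{d(x,\partial\Omega)}.
\end{align*}
Exponentiating and bounding $v(z^*)$ by the supremum of $v$ over the level set $\{d(z^*,\partial\Omega)=\delta^*\}$ gives the claimed inequality.

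The main obstacle is not analytic but geometric: $\delta^*$ must be chosen small enough that the inward normal exponential map is a diffeomorphism (so the curve $\gamma$ is well defined and the identity $d(\gamma(t),\partial\Omega)=t$ really holds) and, simultaneously, small enough that $\Omega_{\delta^*}$ avoids the interior singularity $\{0\}$, so that Corollary~\ref{tui1} may be applied along $\gamma$ with the ball radius $d(x,\partial\Omega)$ rather than the smaller quantity $\min\{d(x,\partial\Omega),d(x,0)\}$. Once this geometric setup is arranged, the remaining step is the standard integration of the logarithmic gradient estimate along the normal and requires no new PDE input.
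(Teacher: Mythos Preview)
Your argument is correct and is essentially the same as the paper's: both integrate the logarithmic gradient bound from Corollary~\ref{tui1} along the inward normal segment joining $x$ to the point $z^*$ on the level set $\{d(\cdot,\partial\Omega)=\delta^*\}$, using that $d(\gamma(t),\partial\Omega)=t$ there. Your treatment is in fact slightly more careful than the paper's, since you explicitly invoke the tubular neighborhood structure and shrink $\delta^*$ so as to avoid the singular point, whereas the paper tacitly assumes this.
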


\subsection{Global gradient estimates}\

If $v$ is an entire positive solution to \eqref{equa1}, by the local gradient estimate, we can give the global bound of the gradient of positive solutions. The following theorem gives an explicit expression of global bound for gradient of positive solution to \eqref{equa1}.
\begin{thm}\label{t10}
Let $(M,g)$ be a complete non-compact Riemannian manifold with $\mathrm{Ric}_g\geq-(n-1)\kappa$. Assume that $v$ is an entire positive solution to \eqref{equa1} in $M$. Then,
\begin{enumerate}
\item if
\begin{align*}
a\left(\frac{n+1}{n-1}-\frac{q+r}{p-1}\right)\geq 0,
\end{align*}
there holds
\begin{align*}
\frac{|\nabla v|}{v}(x)\leq \frac{(n-1)\sqrt{\kappa}}{p-1}, \quad\forall x\in M;
\end{align*}
\item if
\begin{align*}
1<\frac{q+r}{p-1}<\frac{n+3}{n-1},\quad  \forall a\in \R,
\end{align*}
there holds
\begin{align*}
\frac{|\nabla v|}{v}(x)\leq \frac{2\sqrt{\kappa}}{(p-1)\sqrt{\left(\frac{q+r}{p-1}-1\right)\left(\frac{n+3}{n-1}-\frac{q+r}{p-1}\right)}}, \quad\forall x\in M.
\end{align*}
\end{enumerate}
\end{thm}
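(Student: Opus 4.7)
The plan is to derive the global bound by passing to the limit $R \to \infty$ in the local estimate \eqref{equa:1.8} of \thmref{thm1}. Fix any $o \in M$; since $v$ is defined on all of $M$, \thmref{thm1} applies on every geodesic ball $B_R(o)$ and yields
$$\frac{|\nabla v|}{v}(o) \;\leq\; C(n,p,q,r)\,\frac{1+\sqrt{\kappa}\,R}{R},$$
which tends to $C(n,p,q,r)\sqrt{\kappa}$ as $R \to \infty$. Since $o$ was arbitrary, this already gives a global pointwise bound of the required form. The substance of \thmref{t10} is therefore the explicit identification of the optimal leading $\sqrt{\kappa}$-coefficient produced by the Nash-Moser iteration in the proof of \thmref{thm1}.

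For case (1), the hypothesis $a\bigl(\tfrac{n+1}{n-1} - \tfrac{q+r}{p-1}\bigr) \geq 0$ is precisely the algebraic condition under which the contribution of the nonlinearity $a|\nabla v|^q v^r$ to the Bochner-type inequality for $w := \ln v$ enters with a favorable nonnegative sign and can simply be discarded. What remains is structurally the same differential inequality as in the $p$-harmonic case $a=0$, for which the sharp local constant of Sung-Wang is $(n-1)/(p-1)$. Passing to $R \to \infty$ in this sharp estimate gives the claimed global bound $(n-1)\sqrt{\kappa}/(p-1)$.

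For case (2), the two-sided strict inequality $1 < \tfrac{q+r}{p-1} < \tfrac{n+3}{n-1}$ does not provide a favorable sign, but it does allow the nonlinear term to be absorbed into the positive quadratic-gradient and Ricci terms via a Young-type inequality. Writing $\alpha := \tfrac{q+r}{p-1} - 1 > 0$ and $\beta := \tfrac{n+3}{n-1} - \tfrac{q+r}{p-1} > 0$, the optimal Young split balances these two slacks and yields a leading coefficient governed by the geometric mean $\sqrt{\alpha\beta}$, namely $2/[(p-1)\sqrt{\alpha\beta}]$. As a consistency check, at the threshold $\tfrac{q+r}{p-1} = \tfrac{n+1}{n-1}$ one computes $\sqrt{\alpha\beta} = 2/(n-1)$, so the two cases agree; and when $\tfrac{q+r}{p-1}$ approaches either endpoint of the admissible interval, $\sqrt{\alpha\beta} \to 0$ and the constant diverges, as expected.

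The principal obstacle is not the limiting step, which is immediate, but the explicit constant bookkeeping in the Nash-Moser iteration underlying \thmref{thm1}: the local iteration constant must be arranged to split cleanly as a leading $\sqrt{\kappa}$-part, independent of the number of iteration steps, plus a subleading $R^{-1}$-correction, and the Young parameters in the absorption step must be chosen optimally at each stage so that the limiting coefficient recovers the sharp values $(n-1)/(p-1)$ and $2/[(p-1)\sqrt{\alpha\beta}]$ rather than some larger (non-sharp) multiple thereof.
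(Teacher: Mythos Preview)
Your limiting step $R\to\infty$ does give \emph{some} global bound $C(n,p,q,r)\sqrt{\kappa}$, and your heuristic identification of the constants is correct at the level of algebra. But the plan to extract the explicit constants by ``constant bookkeeping in the Nash--Moser iteration'' is not viable. The iteration in Section~4 accumulates multiplicative factors $a_3,\dots,a_{11}$ through infinitely many steps; these are non-explicit and do not ``split cleanly'' into a sharp $\sqrt{\kappa}$-part plus an $R^{-1}$-correction, no matter how the Young parameters are tuned. There is also no sharp \emph{local} Sung--Wang constant to send to the limit: their result is already a statement about the limiting global bound, obtained by an argument that bypasses iteration entirely.

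The paper (Section~6) follows a completely different route, in the spirit of Sung--Wang. One sets $y_1=(n-1)^2\kappa$ in case (1) (resp.\ $y_2=4\kappa\big/\big[(\tfrac{q+r}{p-1}-1)(\tfrac{n+3}{n-1}-\tfrac{q+r}{p-1})\big]$ in case (2)), defines the excess $\omega=(f-y_i-\delta)^+$, and uses the pointwise inequalities \eqref{eq412} and \eqref{equa:3.15a} from \lemref{lem41} together with the already-established global boundedness of $f$ to show
\[
\mathcal{L}(\omega^\alpha)\geq 2\alpha\,\omega^{\alpha-1}\bigl(k_1\omega-k_2|\nabla\omega|\bigr)
\]
for suitable constants. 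Then \lemref{lem42} closes the argument: testing against $\eta^2\omega^\gamma$ on a chain of unit-annuli balls and using Bishop--Gromov volume comparison forces $\int_{B_k}\omega^{\alpha+\gamma}$ to grow faster than the volume allows unless $\omega\equiv 0$. Letting $\delta\to 0$ gives $f\leq y_i$, which is exactly the claimed bound. The explicit values $y_1,y_2$ come directly from solving $\frac{f}{n-1}-(n-1)\kappa\geq 0$ (resp.\ $\beta_{n,p,q,r}f-(n-1)\kappa\geq 0$), not from any iteration constant.
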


The remainder of our paper is organized as follows. In Section 2, we will give a meticulous estimate of $\mathcal{L} \left(|\nabla \log v|^{2\alpha}\right)$ (see \eqref{linearization} for the explicit definition of the operator $\mathcal{L}$) and recall Saloff-Coste's Sobolev embedding theorem. The proof of Theorem \ref{t4} is provided in Section 3. In section 4, we use delicately the Nash-Moser iteration to provide the proofs of the main results in this paper. We give some applications of main results in the case $M$ is a domain contained in an Euclidean space in Section 5. In final section the proof for global gradient estimates are provided.

\section{preliminary}
	
In this paper, $(M,g)$ is an $n$-dimensional Riemann manifold and $\nabla$ is the associated Levi-Civita connection. For any function $\varphi\in C^1(M)$, we denote $\nabla \varphi\in \Gamma(T^*M)$ by $\nabla \varphi(X)=\nabla_X\varphi$. The volume form is denoted as
$$\vol=\sqrt{\det(g_{ij})}d x_1\wedge\ldots\wedge dx_n$$
where $(x_1,\ldots, x_n)$ is a local coordinate. For simplicity, we omit the volume form  in  integrations.
	
For $p>1$, the $p$-Laplace operator is defined by
	$$
	\Delta_pu:=\di\left(|\nabla u|^{p-2}\nabla u\right).
	$$
The solution of $p$-Laplace equation $\Delta_pu=0$ is the critical point of the energy functional
	$$
	E(u)=\int_M|\nabla u|^p.
	$$
	
\begin{defn}\label{def1}
A function $v$ is said to be a positive (weak) solution of equation \eqref{equa1} on a region $\Omega$ if $v\in C^1(\Omega), v>0 $ and for all $\psi\in C_0^\infty(\Omega)$, we have
\begin{align*}
-\int_\Omega|\nabla v|^{p-2}\la\nabla v,\nabla\psi\ra+a\int_\Omega|\nabla u|^qv^r\psi =0.
\end{align*}
\end{defn}
It is worth mentioning that any solution $v$ of equation (\ref{equa1}) satisfies $v\in W^{2,2}_{loc}(\Omega)$ and $v\in C^{1,\alpha}(\Omega)$ for some $\alpha\in(0,1)$(for example, see \cite{MR0709038, MR0727034,MR0474389}). Moreover, $v$ is in fact smooth away from $\{\nabla v=0\}$.
	
Next, we recall Saloff-Coste's Sobolev inequalities (see \cite[Theorem 3.1]{saloff1992uniformly}) which shall play an important role in our proof of the main theorem.
	
\begin{lem}[\cite{saloff1992uniformly}]\label{salof}
Let $(M,g)$ be a complete manifold with $\mathrm{Ric}_g\geq-(n-1)\kappa$. For $n>2$, there exists a positive constant $c_0$ depending only on $n$, such that for all $B\subset M$ of radius R and volume $V$ we have for $f\in C^{\infty}_0(B)$
		$$
		\|f\|_{L^{\frac{2n}{n-2}}(B)}^2\leq e^{c_0(1+\sqrt{\kappa}R)}V^{-\frac{2}{n}}R^2\left(\int_B|\nabla f|^2+R^{-2}f^2\right).
		$$
For $n=2$, the above inequality holds if we replace $n$ by some fixed $n'>2$.
\end{lem}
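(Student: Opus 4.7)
The plan is to prove this classical estimate of Saloff-Coste by combining three standard ingredients on manifolds with Ricci lower bound: Bishop-Gromov volume comparison (to obtain a local doubling property with explicit dependence on $\sqrt{\kappa}R$), Buser's $L^2$-Poincar\'e inequality on geodesic balls, and the Grigor'yan--Saloff-Coste mechanism that upgrades a local volume doubling plus a scale-invariant $(2,2)$-Poincar\'e inequality into an $L^2\to L^{2n/(n-2)}$ Sobolev-type inequality.

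First I would record the Bishop-Gromov inequality: under $\mathrm{Ric}_g\geq -(n-1)\kappa g$, for any $x\in M$ and $0<r\leq s$ one has $\vol(B_s(x))/\vol(B_r(x))\leq (s/r)^n e^{(n-1)\sqrt{\kappa}s}$. Applied to concentric sub-balls of radii $r,2r$ contained in $B$, this gives a doubling constant bounded by $C_n e^{c_n(1+\sqrt{\kappa}R)}$. Second, I would invoke Buser's inequality, which under the same Ricci bound yields, for every ball $B_r(x)\subset B$ and $f\in C^1(B_r(x))$,
$$\int_{B_r(x)}\bigl|f-f_{B_r(x)}\bigr|^2\leq C_n r^2 e^{c_n(1+\sqrt{\kappa}r)}\int_{B_r(x)}|\nabla f|^2,$$
where $f_{B_r(x)}$ denotes the mean. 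These two facts make $(M,g)$ a local Dirichlet space of homogeneous type with explicit control on all the geometric constants.

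The third step is the heart of Saloff-Coste's argument: on a metric measure space satisfying volume doubling of order $n$ and a scale-invariant $(2,2)$-Poincar\'e inequality, one obtains a local Sobolev inequality
$$\|f\|_{L^{2n/(n-2)}(B)}^{2}\leq C\, V^{-2/n}R^2\!\int_B\bigl(|\nabla f|^2+R^{-2}f^2\bigr),\qquad f\in C_0^\infty(B),$$
with $C$ depending only on the doubling and Poincar\'e constants. The proof of this implication proceeds by a Whitney-type covering of $B$ by balls of varying radius, a telescoping argument using the Poincar\'e inequality on each ball (to control $L^2$-oscillations by the gradient), and a Moser/iteration pass to promote the $L^2$ control to the exponent $2n/(n-2)$. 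Propagating the constants from Steps~1 and 2 through this scheme produces exactly the factor $e^{c_0(1+\sqrt{\kappa}R)}$.

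For the two-dimensional case $n=2$, the exponent $2n/(n-2)$ degenerates, so I would simply re-run the argument with any fixed $n'>2$ in place of $n$: Bishop--Gromov still controls the volume ratios with exponent $n'$ at the cost of an admissible multiplicative constant, the Poincar\'e inequality is unchanged, and the preceding scheme then delivers the Sobolev estimate with exponent $2n'/(n'-2)$. The main obstacle is the bookkeeping in Step~3: one has to carry the curvature-dependent constants through the Whitney decomposition and the iteration cleanly enough to see that every occurrence of a doubling or Poincar\'e constant contributes only an additional factor absorbed into $e^{c_0(1+\sqrt{\kappa}R)}$, so that the final constant depends \emph{only} on $n$ and $\sqrt{\kappa}R$ and not on any other geometric data of the ball $B$.
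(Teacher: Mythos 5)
The paper does not prove this lemma at all: it is quoted verbatim as Theorem~3.1 of the cited reference of Saloff-Coste, so there is no in-paper argument to compare yours against. Your outline is a correct reconstruction of the standard modern proof: Bishop--Gromov does give volume doubling at scales up to $R$ with constant $C_n e^{c_n(1+\sqrt{\kappa}R)}$, Buser's inequality (equivalently, the Neumann eigenvalue lower bound under a Ricci lower bound) gives the scale-invariant $(2,2)$-Poincar\'e inequality with the same type of constant, and the Grigor'yan--Saloff-Coste theorem that doubling of order $n$ plus Poincar\'e implies the local Sobolev inequality in exactly the normalized form $\|f\|_{L^{2n/(n-2)}(B)}^2\leq C\,V^{-2/n}R^2\int_B(|\nabla f|^2+R^{-2}f^2)$ then yields the statement; the $n=2$ reduction to a fixed $n'>2$ is also handled correctly, since doubling of order $2$ implies doubling of any higher order. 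The only caveat is that the entire analytic content sits in your third step, which you state essentially as a black box: the implication ``doubling $+$ Poincar\'e $\Rightarrow$ Sobolev'' is not proved by a Moser iteration in the usual sense but by a self-improvement/truncation argument (or, in Saloff-Coste's original treatment, by passing through a Nash-type inequality and heat-kernel upper bounds), and the constant tracking there is where the factor $e^{c_0(1+\sqrt{\kappa}R)}$ with $c_0=c_0(n)$ is actually earned. Since that implication is itself a named theorem in the literature, citing it is legitimate, and your proposal is a sound, if high-level, proof sketch of the lemma.
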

	
For any positive solution $v$ to the equation (\ref{equa1}), if we take a logarithmic transformation $v = e^{-\frac{u}{p-1}}$, then we have
\begin{align}\label{equa2.1}
\Delta_pu-|\nabla u|^p-be^{cu}|\nabla u|^q=0,
\end{align}
where
\begin{align}\label{defofbc}
b =a (p-1)^{p-1-q}\quad\text{and} \quad c=1-\frac{q+r}{p-1}.
\end{align}
	
Denote $f=|\nabla u|^2$. It is easy to see that (\ref{equa2.1}) becomes
	\begin{align}\label{equa2.3}
		\Delta_pu-f^\frac{p}{2}-be^{cu}f^\frac{q}{2}=0.
	\end{align}
Now we consider the linearization operator $\Lie$ of $p$-Laplace operator:
	\begin{align}\label{linearization}
		\Lie(\psi)=\di\left(f^{p/2-1}A\left(\nabla \psi\right)\right),
	\end{align}
where
	\begin{align}\label{defoff}
		f = |\nabla u|^2
	\end{align}
and
	\begin{align}\label{defofA}
		A(\nabla\psi) = \nabla\psi+(p-2)f^{-1}\la\nabla \psi,\nabla u\ra\nabla u.
	\end{align}
We first derive a useful expression of $\mathcal L(f^\alpha)$ for any $\alpha>0$.
	
\begin{lem}\label{lem2.3}
For any $\alpha >0$, the following identity holds point-wisely in $\{x : f(x) > 0\}$,
		\begin{align}\label{bochner1}
			\begin{split}
				\mathcal{L} (f^{\alpha}) =
				&\alpha(\alpha+\frac{p}{2}-2)f^{\alpha+\frac{p}{2}-3}|\nabla f|^2 + 2\alpha f^{\alpha+\frac{p}{2}-2} \left(|\nabla\nabla u|^2
				+ \ric(\nabla u,\nabla u) \right)\\
				&+\alpha(p-2)(\alpha-1)f^{\alpha+\frac{p}{2}-4}\langle\nabla f,\nabla u\rangle^2 + 2\alpha f^{\alpha-1}\langle\nabla\Delta_p u,\nabla u\rangle.
			\end{split}
		\end{align}
	\end{lem}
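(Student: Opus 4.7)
The plan is to compute $\mathcal{L}(f^\alpha)$ by a direct divergence expansion, then invoke Bochner's formula to introduce the Hessian and Ricci terms, and finally re-express the resulting $\langle\nabla\Delta u,\nabla u\rangle$ in terms of $\langle\nabla\Delta_p u,\nabla u\rangle$ so that the right-hand side of \eqref{bochner1} appears.

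First, I would unpack the operator. Since $\nabla(f^\alpha)=\alpha f^{\alpha-1}\nabla f$, by the definition of $A$ in \eqref{defofA},
\begin{equation*}
f^{p/2-1}A(\nabla f^\alpha)=\alpha f^{\alpha+p/2-2}\nabla f+\alpha(p-2)f^{\alpha+p/2-3}\langle\nabla f,\nabla u\rangle\nabla u.
\end{equation*}
Applying $\mathrm{div}$ to each summand via $\mathrm{div}(hV)=\langle\nabla h,V\rangle+h\,\mathrm{div}\,V$ and collecting yields
\begin{align*}
\mathcal{L}(f^\alpha)
&=\alpha(\alpha+\tfrac{p}{2}-2)f^{\alpha+p/2-3}|\nabla f|^2+\alpha f^{\alpha+p/2-2}\Delta f\\
&\quad+\alpha(p-2)(\alpha+\tfrac{p}{2}-3)f^{\alpha+p/2-4}\langle\nabla f,\nabla u\rangle^2\\
&\quad+\alpha(p-2)f^{\alpha+p/2-3}\langle\nabla\langle\nabla f,\nabla u\rangle,\nabla u\rangle+\alpha(p-2)f^{\alpha+p/2-3}\langle\nabla f,\nabla u\rangle\,\Delta u.
\end{align*}

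Next, I would substitute the Bochner identity $\tfrac{1}{2}\Delta f=|\nabla\nabla u|^2+\ric(\nabla u,\nabla u)+\langle\nabla\Delta u,\nabla u\rangle$ into the $\alpha f^{\alpha+p/2-2}\Delta f$ term. This produces the Hessian and Ricci contributions exactly as they appear in the statement, together with a term $2\alpha f^{\alpha+p/2-2}\langle\nabla\Delta u,\nabla u\rangle$ that still needs to be converted.

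The linking step is the algebraic identity $\Delta_p u=f^{p/2-1}\Delta u+(\tfrac{p}{2}-1)f^{p/2-2}\langle\nabla f,\nabla u\rangle$. Taking a gradient, pairing with $\nabla u$, and multiplying by $2\alpha f^{\alpha-1}$ gives
\begin{align*}
2\alpha f^{\alpha-1}\langle\nabla\Delta_p u,\nabla u\rangle
&=2\alpha f^{\alpha+p/2-2}\langle\nabla\Delta u,\nabla u\rangle+\alpha(p-2)f^{\alpha+p/2-3}\langle\nabla f,\nabla u\rangle\Delta u\\
&\quad+\alpha(p-2)f^{\alpha+p/2-3}\langle\nabla\langle\nabla f,\nabla u\rangle,\nabla u\rangle+\alpha(p-2)(\tfrac{p}{2}-2)f^{\alpha+p/2-4}\langle\nabla f,\nabla u\rangle^2.
\end{align*}
Subtracting this identity from the previous expansion cancels precisely the three ``mixed'' terms involving $\Delta u$, $\nabla\langle\nabla f,\nabla u\rangle$, and $\langle\nabla\Delta u,\nabla u\rangle$, leaving only a residual multiple of $f^{\alpha+p/2-4}\langle\nabla f,\nabla u\rangle^2$ whose coefficient simplifies to $\alpha(p-2)(\alpha-1)$ after the combination $(\alpha+\tfrac{p}{2}-3)-(\tfrac{p}{2}-2)=\alpha-1$. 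This yields \eqref{bochner1}.

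The computation is entirely routine; the only place requiring care is the bookkeeping of the coefficient of $\langle\nabla f,\nabla u\rangle^2$, since three separate contributions (from the second-order divergence, from the Bochner reduction, and from the $\nabla\Delta_p u$ identity) must telescope to $\alpha(p-2)(\alpha-1)$. Because $f=|\nabla u|^2$ appears in negative powers throughout, the identity is pointwise valid only on $\{f>0\}$, which is exactly the hypothesis under which the lemma is stated; on this open set $u$ is smooth by standard regularity for the $p$-Laplacian, so all derivatives above are classical.
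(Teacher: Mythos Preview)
Your proof is correct and follows essentially the same route as the paper: both expand $\mathcal{L}(f^\alpha)$ into the five-term divergence expression (the paper's (2.9)), invoke Bochner on the $\Delta f$ term, and use the identity $\Delta_p u=f^{p/2-1}\Delta u+(\tfrac{p}{2}-1)f^{p/2-2}\langle\nabla f,\nabla u\rangle$ to trade the $\langle\nabla\Delta u,\nabla u\rangle$, $\Delta u$, and $\nabla\langle\nabla f,\nabla u\rangle$ terms for $2\alpha f^{\alpha-1}\langle\nabla\Delta_p u,\nabla u\rangle$. The only differences are organizational---the paper factors $\mathcal{L}(f^\alpha)=\alpha\langle\nabla(f^{\alpha-1}),f^{p/2-1}A(\nabla f)\rangle+\alpha f^{\alpha-1}\mathcal{L}(f)$ before expanding, and applies the $\Delta_p$ identity before Bochner rather than after---but the substance and the coefficient bookkeeping are identical.
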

	
\begin{proof}
By the definition of $A$ in (\ref{defofA}), we have
$$A\Big(\nabla (f^{\alpha})\Big) = \alpha f^{\alpha-1}\nabla f + \alpha(p-2)f^{\alpha-2}\langle\nabla f,\nabla u\rangle \nabla u = \alpha f^{\alpha-1}A(\nabla f).$$
Hence
\begin{align*}
\mathcal{L} (f^{\alpha}) &= \alpha \text{div}\Big( f^{\alpha-1}f^{\frac{p}{2}-1}A(\nabla f)\Big) = \alpha \Big\langle\nabla(f^{\alpha-1}),f^{\frac{p}{2}-1}A(\nabla f)\Big\rangle + \alpha f^{\alpha-1}\mathcal{L} (f).
\end{align*}
		
Direction computation shows that
\begin{align}\label{equ2.6}
\alpha \Big\langle\nabla( f^{\alpha-1}),f^{\frac{p}{2}-1}A(\nabla f)\Big\rangle=&\Big\langle \alpha(\alpha-1)f^{\alpha-2}\nabla f, f^{\frac{p}{2}-1}\nabla f + (p-2)f^{\frac{p}{2}-2}\langle\nabla f,\nabla u\rangle\nabla u\Big\rangle,
\end{align}
and
\begin{align}\label{equ2.7}
\begin{split}
\alpha f^{\alpha-1}\mathcal{L}(f) = & \alpha f^{\alpha-1}\Big(\left(\frac{p}{2}-1\right)f^{\frac{p}{2}-2}|\nabla f|^2 + f^{\frac{p}{2}-1}\Delta f + (p-2)\left(\frac{p}{2}-2\right)f^{\frac{p}{2}-3}\langle\nabla f,\nabla u\rangle^2\\
				&+ (p-2)f^{\frac{p}{2}-2}\langle\nabla\langle\nabla f,\nabla u\rangle,\nabla u\rangle + (p-2)f^{\frac{p}{2}-2}\langle\nabla f,\nabla u\rangle\Delta u\Big).
\end{split}
\end{align}
Combining (\ref{equ2.6}) and (\ref{equ2.7}) yields
		\begin{align}\label{equ2.9}
			\begin{split}
				\Lie(f^\alpha)= &\alpha\left(\alpha+\frac{p}{2}-2\right)f^{\alpha+\frac{p}{2}-3}|\nabla f|^2
				+\alpha f^{\alpha+\frac{p}{2}-2}\Delta f\\
				& + \alpha(p-2)\left(\alpha+\frac{p}{2}-3\right)f^{\alpha+\frac{p}{2}-4}\langle\nabla f,\nabla u\rangle^2\\
				& + \alpha(p-2)f^{\alpha+\frac{p}{2}-3}\langle\nabla\langle\nabla f,\nabla u\rangle,\nabla u\rangle
				+\alpha(p-2)f^{\alpha+\frac{p}{2}-3}\langle\nabla f,\nabla u\rangle\Delta u.
			\end{split}
		\end{align}
		
On the other hand, by the definition of the $p$-Laplacian we have
\begin{align*}
\langle\nabla\Delta_p u,\nabla u\rangle &= \left(\frac{p}{2}-1\right)\left(\frac{p}{2}-2\right)f^{\frac{p}{2}-3}\langle\nabla f,\nabla u\rangle^2 + \left(\frac{p}{2}-1\right)f^{\frac{p}{2}-2}\langle\nabla\langle\nabla f,\nabla u\rangle,\nabla u\rangle\\
			&\quad+ \left(\frac{p}{2}-1\right)f^{\frac{p}{2}-2}\langle\nabla f,\nabla u\rangle\Delta u + f^{\frac{p}{2}-1}\langle\nabla\Delta u,\nabla u\rangle.
\end{align*}
Thus, the last term of (\ref{equ2.9}) can be rewritten as
\begin{align}\label{last}
\begin{split}
\alpha(p-2)f^{\alpha+\frac{p}{2}-3}\langle\nabla f,\nabla u\rangle\Delta u
&= 2\alpha f^{\alpha-1}\langle\nabla\Delta_p u,\nabla u\rangle - 2\alpha f^{\alpha+\frac{p}{2}-2}\langle\nabla\Delta u,\nabla u\rangle\\
&\quad- \alpha(p-2)\left(\frac{p}{2}-2\right)f^{\alpha+\frac{p}{2}-4}\langle\nabla f,\nabla u\rangle^2\\
&\quad- \alpha(p-2)f^{\alpha+\frac{p}{2}-3}\langle\nabla\langle\nabla f,\nabla u\rangle,\nabla u\rangle.
\end{split}
\end{align}
By (\ref{last}) and the following Bochner formula $$\frac{1}{2}\Delta f = |\nabla\nabla u|^2 + \ric(\nabla u,\nabla u) + \langle\nabla\Delta u,\nabla u\rangle,$$
we have
		\begin{align*}
			\mathcal{L}(f^{\alpha}) = &\alpha\left(\alpha+\frac{p}{2}-2\right)f^{\alpha+\frac{p}{2}-3}|\nabla f|^2
			+ 2\alpha f^{\alpha+\frac{p}{2}-2} \left(|\nabla\nabla u|^2 + \ric(\nabla u,\nabla u) \right)\\
			&+\alpha(p-2)(\alpha-1)f^{\alpha+\frac{p}{2}-4}\langle\nabla f,\nabla u\rangle^2 +2\alpha f^{\alpha-1}\langle\nabla\Delta_p u,\nabla u\rangle.
		\end{align*}
	\end{proof}
	
\begin{rem}
In the above lemma, we have used the following two facts:
\begin{enumerate}
\item By elliptic regularity theory, one know that $u$ is smooth away from $\{x:f(x)=0\}$. So the above equality holds true in the smooth sense.
			
\item The parameter $\alpha$ is employed to get a larger range of $(q, r)$ where the Cheng-Yau type gradient estimate holds for \eqref{equa1}. Obviously, the larger $\alpha$ is, the better the estimate of $\mathcal{L} (f^{\alpha})/\alpha$ is. Taking $\alpha\to\infty$ leads to the critical range of $(q,r)$ where the Liouville theorem is establishes by our method. When $\alpha=1$ and $\alpha=p/2$, \lemref{lem2.3} has been established in \cite{MR2518892} and \cite{MR4211885}.
\end{enumerate}
\end{rem}

\section{Proof of \thmref{t4}}
In this section, we generalized \cite[Theorem 15.1]{MR1879326} in $\R^n$ to non-compact Riemannian manifold with non-negative Ricci curvature.
\begin{thm}[{\thmref{t4}}]
Let $(M,g)$ be complete non-compact manifold with non-negative Ricci curvature. Suppose that $1<p<n$, $r>0, \, q\geq0,\, q+r>p-1$, and $a>0$. If
		\begin{align*}
			r+\frac{n-1}{n-p}q<\frac{n(p-1)}{n-p},
		\end{align*}
then the problem
		\begin{align}\label{veron}
			\begin{cases}
				\Delta_pv + av^r|\nabla v|^q\leq 0, &\quad\text{in}\,\, M;\\
				v> 0, & \quad\text{in}\,\, M
			\end{cases}
		\end{align}
admits no solution $v\in C^1(M)$.
\end{thm}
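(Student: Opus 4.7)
The plan is to adapt the classical Mitidieri--Pohozaev test function method to the Riemannian setting; the non-negativity of $\ric_g$ enters only through the Bishop--Gromov volume bound $\vol(B_R(o))\leq C(n) R^n$, so apart from this substitution the argument proceeds in parallel to the Euclidean proof in \cite{MR1879326}.

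First I would test the inequality $-\Delta_p v\geq a v^r|\nabla v|^q$ against the admissible (by $v>0$) function $\psi=\phi^m v^{-s}$, where $\phi\in C_c^\infty(M)$ is a non-negative cutoff, $s>0$ is a small parameter, and $m$ is a large integer. Computing $\nabla\psi$ and integrating by parts in the weak form yields the fundamental estimate
\begin{equation*}
s\int v^{-s-1}|\nabla v|^p \phi^m + a\int v^{r-s}|\nabla v|^q \phi^m \leq m\int v^{-s}|\nabla v|^{p-1}\phi^{m-1}|\nabla\phi|.
\end{equation*}
The key step is a weighted three-factor Young inequality with positive exponents $\alpha,\beta,\gamma$ summing to $1$, designed so that the right-hand integrand is factored into one power matching $v^{-s-1}|\nabla v|^p\phi^m$, one power matching $v^{r-s}|\nabla v|^q\phi^m$, and a remaining pure cutoff piece $\phi^{\gamma m-1}|\nabla\phi|$. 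Matching exponents forces the two algebraic constraints
\begin{equation*}
\alpha p+\beta q = p-1, \qquad \gamma s = \alpha-\beta r.
\end{equation*}
Choosing the weights so that the first two pieces carry small coefficients, absorbing them into the left side, and taking a standard cutoff with $\phi\equiv 1$ on $B_R(o)$, support in $B_{2R}(o)$, and $|\nabla\phi|\leq C/R$, Bishop--Gromov then gives
\begin{equation*}
\int v^{-s-1}|\nabla v|^p \phi^m + \int v^{r-s}|\nabla v|^q\phi^m \leq CR^{-1/\gamma}\vol(B_{2R}(o)) \leq CR^{n-1/\gamma}.
\end{equation*}

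The main obstacle, and the heart of the proof, is the feasibility analysis for the triple $(\alpha,\beta,s)$: I must exhibit parameters satisfying the two algebraic constraints together with $\alpha,\beta,\gamma,s>0$ and the volume-decay condition $\gamma<1/n$. Minimizing $\gamma=1-\alpha-\beta$ subject to $\alpha p+\beta q=p-1$ and the sign constraint $\alpha\geq\beta r$ (forced by $s\geq 0$) shows the infimum is attained on the boundary $\alpha=\beta r$, giving $\beta=(p-1)/(rp+q)$ and
\begin{equation*}
\gamma = \frac{q+r+1-p}{rp+q};
\end{equation*}
the inequality $\gamma<1/n$ then rearranges exactly to $(n-1)q+(n-p)r<n(p-1)$, which is the theorem's hypothesis. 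Since this hypothesis is strict, by continuity an admissible triple with $s>0$ still achieves $\gamma<1/n$. Sending $R\to\infty$ forces both integrals on the left to vanish, so $|\nabla v|\equiv 0$ and $v$ must be a constant; combined with $a>0$ and $r>0$ this contradicts the original differential inequality, completing the proof.
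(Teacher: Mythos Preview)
Your approach is essentially the paper's: both test against $\varphi v^{\alpha}$ with $\alpha<0$ (your $-s$), derive the same fundamental inequality, and absorb the right-hand side via Young/H\"older, leaving a pure cutoff integral controlled by Bishop--Gromov; the only difference is cosmetic---the paper uses a two-factor Young inequality followed by H\"older where you apply a single three-factor Young, and your feasibility condition $\gamma<1/n$ unwinds to exactly their $n-\frac{s}{s-1}\frac{\mu}{\mu-1}<0$. One caveat shared by both arguments: when $q>0$ the vanishing of the left-hand integrals only forces $\nabla v\equiv 0$, i.e.\ $v$ is a positive constant, which \emph{does} satisfy $\Delta_p v + av^r|\nabla v|^q=0\le 0$; so the method really proves that every solution is constant (the original Mitidieri--Pohozaev conclusion), and the nonexistence as literally stated follows only for $q=0$.
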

	
\begin{proof}
We show this theorem by almost the same arguments as in the proof of Theorem 15.1 in \cite{MR1879326} due to Mitidieri and Pohozaev. Let $\varphi\in C_0^1(M)$ be a non-negative cut-off function and $\alpha<0$ be small. Multiplying \eqref{veron} by $\varphi v^\alpha$ and integrating the inequality obtained over $M$, we arrive at
		\begin{align}\label{equa5}
			\int_Mav^{r+\alpha}|\nabla v|^q\varphi+|\alpha|\int_M|\nabla v|^pv^{\alpha-1}\varphi
			\leq \int_M |\nabla v|^{p-1}v^\alpha|\nabla \varphi|.
		\end{align}
It follows from \eqref{equa5} that
\begin{align*}
\int_Mav^{r+\alpha}|\nabla v|^q\varphi+|\alpha|\int_M|\nabla v|^pv^{\alpha-1}\varphi
\leq &\int_M |\nabla v|^{\frac{p}{s}}v^{\frac{\alpha-1}{s}}	\varphi^{\frac{1}{s}} v^{\frac{\alpha s+1-\alpha}{s}}
|\nabla \varphi||\nabla v|^{\frac{s(p-1)-p}{s}}\varphi^{-\frac{1}{s}},
\end{align*}
where $s>1$ is a constant which will be determined later. Now, applying Young inequality with exponents $s,\, t, \, \frac{1}{s}+\frac{1}{t}=1$ and $\epsilon>0$ for the integral function on the right hand side of the above inequality we have
\begin{align}\label{boho2}
\begin{split}
&\int_Mav^{r+\alpha}|\nabla v|^q\varphi+|\alpha|\int_M|\nabla v|^pv^{\alpha-1}\varphi\\
\leq &\frac{\epsilon}{s}\int_M|\nabla v|^pv^{\alpha-1}\varphi +\frac{\epsilon^{-\frac{1}{s-1}}}{t}\int_Mv^{\frac{\alpha s+1-\alpha}{s-1}}
|\nabla \varphi|^{\frac{s}{s-1}}|\nabla v|^{\frac{s(p-1)-p}{s-1}}\varphi^{-\frac{1}{s-1}},
\end{split}
\end{align}
where $\epsilon$ is small enough such that $|\alpha|\geq\frac{\epsilon}{s}$.
		
Hence, by H\"older inequality we can infer from \eqref{boho2} the following
\begin{align}\label{boho3}
\begin{split}
a\int_M v^{r+\alpha}|\nabla v|^q\varphi\leq &\frac{\epsilon^{-\frac{1}{s-1}}}{t}\int_Mv^{\frac{\alpha s+1-\alpha}{s-1}}
|\nabla \varphi|^{\frac{s}{s-1}}|\nabla v|^{\frac{s(p-1)-p}{s-1}}\varphi^{-\frac{1}{s-1}}\\
\leq &\frac{\epsilon^{-\frac{1}{s-1}}}{t}\left(\int_Mv^{\frac{\alpha s+1-\alpha}{s-1}\mu}|\nabla v|^{\frac{s(p-1)-p}{s-1}\mu} \varphi\right)^\frac{1}{\mu} \left(\int_M |\nabla \varphi|^{\frac{s\nu}{s-1}}\varphi^{1-\frac{s\nu}{s-1}}\right)^\frac{1}{\nu}.
\end{split}
\end{align}
Now we set $s$ and $\mu$ by
$$
\begin{cases}
\frac{\alpha s+1-\alpha}{s-1}\mu=r+\alpha,\\
\frac{s(p-1)-p}{s-1}\mu=q.
\end{cases}
$$
It is easy to check that
\begin{align}
s=\frac{pr+\alpha(p-q)+q}{r(p-1)+\alpha(p-1-q)}.
\end{align}
For $\alpha>1-p$, we can verify that
$$
\begin{cases}
pr+\alpha(p-q)+q>0,\\
r(p-1)+\alpha(p-1-q)>0
\end{cases}\quad\text{and}\quad s>\frac{p}{p-1}>1.
$$
It follows from \eqref{boho3} that
\begin{align}\label{boho4}
\int_M v^{r+\alpha}|\nabla v|^q\varphi\leq\left( \frac{\epsilon^{-\frac{1}{s-1}}}{at} \right)^\nu
\int_M|\nabla \varphi|^{\frac{s\nu}{s-1}}\varphi^{1-\frac{s\nu}{s-1}}.
\end{align}
		
Now we can choose $\gamma\in C^{\infty}_0(B_{2R}(o))$ such that
$$
\begin{cases}
\gamma(x)\equiv1, & \quad\text{in}\quad B_R(0),\\
1\geq \gamma(x)\geq 0,\quad |\gamma(x)|\leq\frac{C}{R}, &\quad\text{in}\quad B_{2R}(0)
\end{cases}
$$
and let $\varphi = \gamma^l$ with $l$ large enough, then we have
$$|\nabla \varphi|^{\frac{s\nu}{s-1}}\varphi^{1-\frac{s\nu}{s-1}}=l^{\frac{s\nu}{s-1}}\gamma^{l-\frac{s\nu}{s-1}}|\nabla \gamma|^{\frac{s\nu}{s-1}}\leq CR^{-\frac{s\nu}{s-1}}.$$
Since $\mathrm{Ric}_g\geq 0$, by the volume comparison theorem we conclude from \eqref{boho4} and the last inequality that
		\begin{align*}
			\int_Mv^{r+\alpha}|\nabla v|^q\varphi\leq CR^{n-\frac{s\nu}{s-1}}=CR^{n-\frac{s}{s-1}\frac{\mu}{\mu-1}}.
		\end{align*}
Direct computation shows that we can always choose $\alpha<0$ with $|\alpha|$ small enough such that
$$n-\frac{s}{s-1}\frac{\mu}{\mu-1}<0,$$
if
$$r(n-p)+q(n-1)<n(p-1).$$
Taking $R\to\infty$ leads to that
$$\int_Mv^{r+\alpha}|\nabla v|^q\varphi=0$$
which implies $v\equiv0$. Thus, we complete the proof of the theorem.
\end{proof}
	
Let $r>0, \, q\geq0$, and $p>1$. For small enough $\alpha<0$ we define
$$S_\alpha=\{u:M\to\mathbb{R}: u\geq0, \, u^{\alpha+r}|\nabla u|^q,\, u^{\alpha-1}|\nabla u|^p,\, u^{\alpha}|\nabla u|^{p-1}\in L^1_{loc}(M)\},$$
where $\nabla u$ is understood in the sense of distribution. Actually, we can also prove the following more general conclusions:
	
\begin{thm}
Let $(M,g)$ be complete non-compact manifold with non-negative Ricci curvature. Suppose that $1<p<n$, $r>0, \, q\geq0,\, q+r>p-1$, and $a>0$. If
\begin{align*}
r+\frac{n-1}{n-p}q<\frac{n(p-1)}{n-p},
\end{align*}
then the problem
\begin{align*}
\begin{cases}
\Delta_pv+av^r|\nabla v|^q\leq 0, &\quad\text{in}\,\, M;\\
v\geq 0, & \quad\text{in}\,\, M
\end{cases}
\end{align*}
admits no solution in $S_\alpha$ unless $v\equiv 0$.
\end{thm}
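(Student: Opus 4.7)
The strategy is to adapt almost verbatim the Mitidieri--Pohozaev type argument used in the proof of \thmref{t4}, making two modifications to handle (i) the possible vanishing of $v$ and (ii) the weaker regularity inherent in $S_\alpha$. Since $v^\alpha$ is singular where $v=0$ for $\alpha<0$, I would replace $v$ by $v_\epsilon := v+\epsilon$ throughout and take the test function $\psi = \varphi\, v_\epsilon^{\alpha}$, where $\varphi=\gamma^l\ge 0$ is the cutoff used before. Because $v_\epsilon\ge\epsilon>0$ and $\varphi$ is compactly supported, $\psi$ is bounded, nonnegative, and lies in $W^{1,p}_{\mathrm{loc}}$ after a standard approximation argument; the density conditions packaged into $S_\alpha$ are precisely what permits using $\psi$ (or its mollifications) in the distributional formulation of the differential inequality.

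Plugging $\psi$ into the weak form of $\Delta_p v + a v^r|\nabla v|^q\le 0$ and using $\nabla\psi = v_\epsilon^{\alpha}\nabla\varphi + \alpha\varphi v_\epsilon^{\alpha-1}\nabla v$, one obtains
\begin{align*}
\int_M a\, v^r|\nabla v|^q\,\varphi\, v_\epsilon^{\alpha} + |\alpha|\int_M |\nabla v|^p\, v_\epsilon^{\alpha-1}\varphi \le \int_M |\nabla v|^{p-1}\, v_\epsilon^{\alpha}\,|\nabla\varphi|,
\end{align*}
which is exactly the $\epsilon$-perturbed analogue of \eqref{equa5}. The three integrands are monotonically controlled (as $\epsilon\downarrow 0$) by $v^{\alpha+r}|\nabla v|^q$, $v^{\alpha-1}|\nabla v|^p$, and $v^{\alpha}|\nabla v|^{p-1}$, and these are precisely the three quantities required to be in $L^1_{\mathrm{loc}}(M)$ by the very definition of $S_\alpha$. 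Hence dominated/monotone convergence lets me pass to the limit $\epsilon\to 0$ and recover inequality \eqref{equa5} verbatim, where the set $\{v=0\}$ contributes nothing since $\nabla v=0$ a.e.\ there in the weak sense.

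From this point on, the argument is identical to the proof of \thmref{t4}: apply Young's inequality with exponents $s,t$ (with $s=\frac{pr+\alpha(p-q)+q}{r(p-1)+\alpha(p-1-q)}>1$ for $|\alpha|$ small) to absorb the $\int|\nabla v|^p v^{\alpha-1}\varphi$ term, then use H\"older to obtain \eqref{boho4}, insert $\varphi=\gamma^l$ with $\gamma$ a standard cutoff on $B_{2R}(o)$, and bound $\mathrm{vol}(B_{2R}(o))\le CR^n$ via Bishop--Gromov (which needs only $\ric_g\ge 0$). The hypothesis $r(n-p)+q(n-1)<n(p-1)$ forces $n-\tfrac{s}{s-1}\tfrac{\mu}{\mu-1}<0$ for $\alpha<0$ with $|\alpha|$ small enough, so letting $R\to\infty$ yields $\int_M v^{r+\alpha}|\nabla v|^q\varphi=0$ for every compactly supported $\varphi\ge 0$. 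Since $r+\alpha>0$ and $q\ge 0$, this forces $\nabla v\equiv 0$ a.e.\ on $\{v>0\}$; combining with the original inequality $-\Delta_p v \ge a v^r|\nabla v|^q$ and $a>0$, $r>0$ yields $v\equiv 0$.

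\textbf{Main obstacle.} The only genuinely new difficulty compared to \thmref{t4} is the justification of the integration by parts: that $\varphi v_\epsilon^\alpha$ (or a mollification thereof) is an admissible test function and that the limit $\epsilon\downarrow 0$ commutes with all three integrals. The definition of $S_\alpha$ is engineered for exactly this purpose, but one must be careful to approximate $v$ (for instance by $\min(v,k)$ or by convolution in local charts) in order to stay within the distributional framework where the inequality $\Delta_p v + av^r|\nabla v|^q\le 0$ is actually postulated. Once this technical step is in place, the remainder is a routine re-run of the Mitidieri--Pohozaev scheme already carried out for \thmref{t4}.
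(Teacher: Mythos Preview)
Your proposal is correct and follows exactly the route the paper indicates: the paper omits the proof entirely, stating only that it ``goes almost the same as the proof of Theorem 15.1 in \cite{MR1879326} except for we need to use the volume comparison theorem,'' and your $v_\epsilon=v+\epsilon$ regularization together with the $S_\alpha$ integrability to pass to the limit is precisely the standard Mitidieri--Pohozaev device for handling $v\ge 0$ rather than $v>0$. The only point to be slightly careful about is the very last line: from $\int_M v^{r+\alpha}|\nabla v|^q=0$ you get $|\nabla v|=0$ a.e.\ on $\{v>0\}$, which for $q>0$ allows positive constants (these satisfy the differential inequality trivially), so strictly speaking the conclusion should be ``$v$ is constant'' rather than ``$v\equiv 0$''---but this imprecision is already present in the paper's statement.
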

	
The proof of this theorem goes almost the same as the proof of Theorem 15.1 in \cite{MR1879326} except for we need to use the volume comparison theorem. So, we omit it.

\section{Gradient estimates and applications}
We divide the proof of \thmref{thm1} into three parts. In the first part, we derive a fundamental integral inequality on $f=|\nabla u|^2$, which will be used in the second and third parts. In the second part, we give an $L^{\beta}$-estimate of $f$ on a geodesic ball with radius $3R/4$, where $L^{\beta}$ norm of $f$ determines the initial state of the Nash-Moser iteration. Finally, we give a complete proof of our theorem by an intensive use of Nash-Moser iteration method.
	
\subsection{Integral inequality}\label{sect:3.1}
In order to establish the main theorem, we need to show some lemmas on integral estimates. We first introduce the following point-wise estimate for $\Lie(f^\alpha)$.
	\begin{lem}\label{lem41}
		Let $(M,g)$ be an $n$-dim complete Riemannian manifold with $\ric_g\geq -(n-1)\kappa g$ where $\kappa$ is a non-negative constant. Assume that $v$ is a positive solution to the equation (\ref{equa1}), $u = -(p-1)\log v$ and $f=|\nabla u|^2$. If $n,\, p,\, q,\, r$ and $a$ satisfy
		\begin{align*}
			a\left(\frac{n+1}{n-1}-\frac{q+r}{p-1}\right)\geq 0.
		\end{align*}
		or
		\begin{align*}
			1<\frac{q+r}{p-1}<\frac{n+3}{n-1},\quad  \forall a\in \R.
		\end{align*}
		Then there exists constant $\alpha_0=\alpha_0(n,p,q,r)>0$ such that the following inequality
		\begin{align*}
			\begin{split}
				\mathcal{L}(f^{\alpha_0 })\geq& 2\alpha_0 \beta_{n,p,q,r} f^{\alpha_0 +\frac{p}{2}} -2\alpha_0 (n-1)\kappa f^{\alpha_0 +\frac{p}{2}-1}- \alpha_0  a_1|\nabla f|f^{\alpha_0 +\frac{p-3}{2}}
			\end{split}
		\end{align*}
		holds point-wisely in $\{x:f(x)>0\}$, where $a_1 = \left|p -\frac{2(p-1)}{n-1}\right|$.
	\end{lem}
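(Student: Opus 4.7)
The plan is to start from the identity for $\mathcal{L}(f^{\alpha})$ in \lemref{lem2.3} and reorganize its right-hand side into a list of canonical terms whose signs and scalings we control. First I would use equation \eqref{equa2.3} to eliminate $\Delta u$ and $\langle\nabla\Delta_p u,\nabla u\rangle$ in favor of $f$, $u_{11}:=\langle\nabla f,\nabla u\rangle/(2f)$, and $w:=be^{cu}f^{(q-p+2)/2}$. From $\Delta_p u = f^{p/2} + be^{cu}f^{q/2}$ one computes
\[
\Delta u = f + w - (p-2)u_{11}, \qquad \Delta u - u_{11} = f + w - (p-1)u_{11},
\]
and differentiating \eqref{equa2.3} along $\nabla u$ yields an explicit formula for the last term in \eqref{bochner1}.

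Next I would apply the refined Bochner inequality
\[
|\nabla\nabla u|^2 \geq -u_{11}^2 + \frac{|\nabla f|^2}{2f} + \frac{(\Delta u - u_{11})^2}{n-1},
\]
which combines the Cauchy--Schwarz bound on the $(n-1)\times(n-1)$ block of $\nabla\nabla u$ with the identity $\sum_j u_{1j}^2 = |\nabla f|^2/(4f)$. Substituting into \eqref{bochner1}, using $\ric(\nabla u,\nabla u)\geq -(n-1)\kappa f$, and expanding $(\Delta u - u_{11})^2 = (f+w)^2 - 2(p-1)u_{11}(f+w) + (p-1)^2 u_{11}^2$ sorts everything into the following canonical pieces: a positive $|\nabla f|^2 f^{\alpha+p/2-3}$ term with coefficient $\alpha(\alpha+p/2-1)$; a $u_{11}^2 f^{\alpha+p/2-2}$ term; the principal positive term $T_3 := \tfrac{2\alpha}{n-1}f^{\alpha+p/2}$ (from $f^2$); a cross term $T_4 := 2\alpha b\tilde c\, e^{cu}f^{\alpha+q/2}$ with $\tilde c := \tfrac{n+1}{n-1} - \tfrac{q+r}{p-1}$; a nonnegative quadratic $T_5 := \tfrac{2\alpha b^2}{n-1}e^{2cu}f^{\alpha+q-p/2}$; a drift $T_6 := 2\alpha\bigl(p-\tfrac{2(p-1)}{n-1}\bigr)u_{11}f^{\alpha+p/2-1}$; a mixed $T_7 := 2\alpha b\bigl(q-\tfrac{2(p-1)}{n-1}\bigr)u_{11}e^{cu}f^{\alpha+q/2-1}$; and the Ricci term $-2\alpha(n-1)\kappa f^{\alpha+p/2-1}$.

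The absorption steps are then mechanical. I would apply $|u_{11}|\leq |\nabla f|/(2\sqrt{f})$ to $T_6$, which immediately produces the $-\alpha a_1 |\nabla f|f^{\alpha+(p-3)/2}$ drift appearing in the conclusion. For $T_7$ a weighted AM--GM splits it into a $u_{11}^2 f^{\alpha+p/2-2}$ piece and an $e^{2cu}f^{\alpha+q-p/2}$ piece; the latter is absorbed into a small fraction $\eta$ of $T_5$, while the former is combined with the pre-existing $u_{11}^2$ terms and then bounded by $|\nabla f|^2/(4f)$ to be absorbed into the $|\nabla f|^2$ coefficient. A straightforward bookkeeping shows that this leftover coefficient is of the form $\alpha[\alpha(p-1) + O(1)]$ and hence becomes positive for all $p>1$ provided $\alpha_0=\alpha_0(n,p,q,r)$ is chosen sufficiently large.

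The heart of the argument, and the main obstacle, is handling $T_3 + T_4 + (1-\eta)T_5$ pointwise under condition \eqref{cond2a}. The decisive observation is that the powers of $f$ and of $e^{cu}$ in $T_4^2$ match exactly those in $T_3 T_5$, so
\[
\frac{T_4^{\,2}}{T_3\, T_5} = (n-1)^2 \tilde c^{\,2}
\]
is a pure constant. A weighted AM--GM therefore gives
\[
T_3 + T_4 + (1-\eta)T_5 \geq \left[1 - \frac{(n-1)^2 \tilde c^{\,2}}{4(1-\eta)}\right] T_3,
\]
which is a strictly positive multiple of $T_3$ precisely when $(n-1)|\tilde c|<2$; an elementary algebraic manipulation shows this is equivalent to \eqref{cond2a}. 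Under \eqref{cond1a} one has $ab\tilde c\geq 0$, so $T_4\geq 0$ and is simply dropped, yielding $\beta_{n,p,q,r} = 1/(n-1)$ up to the $\eta$-loss from $T_7$ absorption. Choosing $\eta$ small enough in both cases yields a strictly positive constant $\beta_{n,p,q,r}$, and assembling all the bounds produces exactly the stated inequality.
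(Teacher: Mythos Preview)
Your proposal is correct and follows essentially the same route as the paper: both start from the identity in \lemref{lem2.3}, use the equation to express $\langle\nabla\Delta_p u,\nabla u\rangle$ and $\Delta u-u_{11}$ in terms of $f$, $u_{11}$, and $w=be^{cu}f^{(q-p+2)/2}$, apply Cauchy--Schwarz on the Hessian, and then observe that the quadratic $\tfrac{1}{n-1}(f^2+w^2)+\tilde c\,fw$ is positive definite exactly under \eqref{cond2a} (and trivially handled under \eqref{cond1a}). The only organizational difference is that the paper converts the $|\nabla f|^2$ term of \lemref{lem2.3} into $u_{11}^2$ via $|\nabla f|^2/f\geq 4u_{11}^2$ and absorbs your $T_7$ directly into the resulting $(2\alpha-1)(p-1)u_{11}^2$ coefficient (producing the $\alpha$-dependent constant $B_{n,p,q,\alpha}$), whereas you keep the $|\nabla f|^2$ budget, split $T_7$ by AM--GM into $u_{11}^2$ and $\eta T_5$, and then push $u_{11}^2$ back into $|\nabla f|^2$; both bookkeepings yield the same conclusion for $\alpha$ large.
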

	
	\begin{proof}
		Let $\{e_1,e_2,\ldots, e_n\}$ be  an orthonormal frame of $TM$ on a domain with $f\neq 0$ such that $e_1=\frac{\nabla u}{|\nabla u|}$. We have $u_1 = f^{1/2}$. Direct computation shows
		\begin{align}\label{equ2.8}
			u_{11} = \frac{1}{2}f^{-1/2}f_1 = \frac{1}{2}f^{-1}\la\nabla u,\nabla f\ra,
		\end{align}
		and
		\begin{align}
			\label{equa2.9}
			|\nabla f|^2/f=4\sum_{i=1}^n u_{1i}^2 \geq 4u^2_{11}.
		\end{align}
		It follows from (\ref{equa2.3}) and (\ref{equ2.8}) that
		\begin{align}\label{equ2.10}
			\begin{split}
				\langle\nabla\Delta_p u,\nabla u\rangle=
				&\ \frac{p}{2}f^{\frac{p}{2}-1}\la\nabla f,\nabla u\ra+be^{cu}\frac{q}{2}f^{\frac{q}{2}-1}\la\nabla f,\nabla u\ra+bce^{cu}f^{\frac{q}{2}+1}\\
				=&\ pf^{\frac{p}{2} }u_{11}+bqe^{cu} f^{\frac{q}{2} }u_{11}+bce^{cu}f^{\frac{q}{2}+1}.
			\end{split}
		\end{align}
		By Cauchy inequality, we arrive at
		\begin{align}\label{equ2.11}
			|\nabla\nabla u|^2 \geq& u_{11}^2 + \sum_{i=2}u_{ii}^2 \geq u_{11}^2 +\frac{1}{n-1}\left(\sum_{i=2}u_{ii}\right)^2.
		\end{align}
		From now only, we always assume that $\alpha\geq 3/2$, which can guarantee $\alpha+p/2-1>0$. Substituting (\ref{equ2.8}), (\ref{equa2.9}), (\ref{equ2.10}) and (\ref{equ2.11}) into (\ref{bochner1}) and collecting terms about $u_{11}^2$, we obtained
		\begin{align}\label{equa3.5}
			\begin{split}
				\frac{f^{2-\alpha-\frac{p}{2}}}{2\alpha}
				\mathcal{L} (f^{\alpha}) \geq &\ (2\alpha-1)(p-1) u_{11}^2 + \ric(\nabla u,\nabla u)+\frac{1}{n-1}\left(\sum_{i=2}u_{ii}\right)^2\\
				&+ pf u_{11}+bqe^{cu} f^{1+\frac{p-q}{2} }u_{11}+bce^{cu}f^{\frac{q-p}{2}+2}.
			\end{split}
		\end{align}
		
		If we express the $p$-Lapalce in terms of $f$, we have
		\begin{align}\label{equ:2.12}
			\Delta_p u
			=&f^{\frac{p}{2}-1}\left((p-1)u_{11}+\sum_{i=2}^nu_{ii}\right).
		\end{align}
		Substituting (\ref{equ:2.12}) into equation (\ref{equa2.3}), we obtain
		\begin{align}\label{eq:2.13}
			(p-1)u_{11}+\sum_{i=2}^nu_{ii}^2
			=f +be^{cu}f^{\frac{q-p}{2}+1}.
		\end{align}
		It follows from (\ref{eq:2.13}) that
		\begin{align}\label{equ:2.13}
			\begin{split}
				\frac{1}{n-1}(\sum_{i=2}^nu_{ii}^2)=\frac{1}{n-1}&\left(f+be^{cu}f^{\frac{q-p}{2}+1}-(p-1)u_{11}\right)^2\\
				=\frac{1}{n-1}&\Big(f^2+b^2e^{2cu}f^{ q-p+2}+(p-1)^2u^2_{11}+2be^{cu}f^{\frac{q-p}{2}+2}\\
				&\ -2(p-1)fu_{11}-2b(p-1)e^{cu}f^{\frac{q-p}{2}+1}u_{11}\Big).
			\end{split}
		\end{align}
		
		It follows from (\ref{equa3.5}) and (\ref{equ:2.13}) that
		\begin{align}\label{equa2.15}
			\begin{split}
				\frac{f^{2-\alpha-\frac{p}{2}}}{2\alpha}
				\mathcal{L} (f^{\alpha})
				\geq
				&
				\left((2\alpha-1)(p-1)+\frac{(p-1)^2}{n-1}\right) u_{11}^2
				+
				\frac{f^2}{n-1}
				+
				\frac{b^2e^{2cu}f^{q-p+2}}{n-1}
				\\
				&
				+\left(p -\frac{2(p-1)}{n-1}\right)f u_{11}
				+ \ric(\nabla u,\nabla u)
				\\
				&
				+b\left(q-\frac{2(p-1)}{n-1}\right)e^{cu} f^{1+\frac{q-p}{2} }u_{11}
				+b\left(c+\frac{2}{n-1}\right)e^{cu}f^{\frac{q-p}{2}+2}.
			\end{split}
		\end{align}
		Making use of $a^2+2ab\geq -b^2$ , we have
		\begin{align}
			\label{equa2.16}
			\begin{split}
				&\left((2\alpha-1)(p-1)+\frac{(p-1)^2}{n-1}\right) u_{11}^2
				+
				b\left(q-\frac{2(p-1)}{n-1}\right)e^{cu} f^{1+\frac{q-p}{2} }u_{11}
				\\
				\geq
				&-
				\frac{\left(q(n-1)- 2(p-1) \right)^2}{ 4(n-1)(p-1)((2\alpha-1)(n-1)+ p-1)  }
				b^2e^{2cu}f^{q-p+2}.
			\end{split}
		\end{align}
		Substituting (\ref{equa2.16}) into (\ref{equa2.15}) yields
		\begin{align}
			\label{equa:3.11}
			\begin{split}
				\frac{f^{2-\alpha-\frac{p}{2}}}{2\alpha}
				\mathcal{L} (f^{\alpha}) \geq&\
				\frac{f^2}{n-1}
				+
				B_{n,p,q,\alpha} b^2e^{2cu}f^{q-p+2}
				+ \ric(\nabla u,\nabla u)
				\\
				&
				+\left(p -\frac{2(p-1)}{n-1}\right)f u_{11}
				+b\left(c+\frac{2}{n-1}\right)e^{cu}f^{\frac{q-p}{2}+2},
			\end{split}
		\end{align}
		where
		\begin{align}
			\label{equa:3.12}
			B_{n,p,q,\alpha} = \frac{1}{n-1}-
			\frac{\left(q(n-1)- 2(p-1) \right)^2}{ (n-1)(p-1)((2\alpha-1)(n-1)+ p-1)  }.
		\end{align}
		For any $n,p,q$ with $n>1, p>1$, we can always choose $\alpha$ large enough such that $B_{n,p,q,\alpha}$ is positive.
		
		\textbf{Case 1.}
		$$
		a\left(\frac{n+1}{n-1}-\frac{q+r}{p-1}\right)\geq 0.
		$$
		In this case, by the definition of $b, c$ in \eqref{defofbc} we have
		$$
		b\left(c+\frac{2}{n-1}\right)=(p-1)^{p-q-1}a\left(\frac{n+1}{n-1}-\frac{q+r}{p-1}\right)\geq 0.
		$$
		Hence,
		\begin{align*}
			\frac{f^{2-\alpha-\frac{p}{2}}}{2\alpha}
			\mathcal{L} (f^{\alpha}) \geq&\
			\frac{f^2}{n-1}
			+
			B_{n,p,q,\alpha} b^2e^{2cu}f^{q-p+2}
			+ \ric(\nabla u,\nabla u)
			\\
			&
			+\left(p -\frac{2(p-1)}{n-1}\right)f u_{11}.
		\end{align*}
		By \eqref{equa:3.12}, we have
		$$
		\lim_{\alpha\to\infty}B_{n,p,q,\alpha}=\frac{1}{n-1}>0.
		$$
		So there exists a constant $\alpha_1=\alpha_1(n,p,q)\geq 3/2$, for any  $\alpha\geq \alpha_1$, we have $B_{n,p,q,\alpha}>0$ and
		\begin{align}\label{eq412}
			\frac{f^{2-\alpha-\frac{p}{2}}}{2\alpha}
			\mathcal{L} (f^{\alpha}) \geq&
			\frac{f^2}{n-1}
			+ \ric(\nabla u,\nabla u)
			+\left(p -\frac{2(p-1)}{n-1}\right)f u_{11}.
		\end{align}
		
		\textbf{Case 2.}
		$$
		1<\frac{q+r}{p-1}<\frac{n+3}{n-1}.
		$$
		This condition is equivalent to
		\begin{align}
			\label{equ2.20}
			\begin{split}
				\frac{1}{n-1}-\frac{(n-1)}{4}\left(\frac{q+r}{p-1}-\frac{n+1}{n-1}\right)^2>0.
			\end{split}
		\end{align}
		Since
		$$
		B_{n,p,q} b^2e^{2cu}f^{q-p+2}+
		b\left(c+\frac{2}{n-1}\right)e^{cu}f^{\frac{q-p}{2}+2}\geq -\frac{1}{4B_{n,p,q,\alpha}}\left(c+\frac{2}{n-1}\right)^2f^2,
		$$
		we can conclude from (\ref{equa:3.11}) that
		\begin{align*}
			\frac{f^{2-\alpha-\frac{p}{2}}}{2\alpha}
			\mathcal{L} (f^{\alpha}) \geq&
			\left(\frac{1}{n-1}-\frac{1}{4B_{n,p,q,\alpha}}\left(c+\frac{2}{n-1}\right)^2\right)f^2
			\\
			&
			+ \ric\left(\nabla u,\nabla u\right)
			+\left(p -\frac{2(p-1)}{n-1}\right)f u_{11}.
		\end{align*}
		It follows from (\ref{equa:3.12}) and (\ref{equ2.20}) that
		$$
		\lim_{\alpha\to\infty}\frac{1}{n-1}-\frac{1}{4B_{n,p,q,\alpha}}\left(c+\frac{2}{n-1}\right)^2
		=\frac{1}{n-1}-\frac{(n-1)}{4}\left(\frac{q+r}{p-1}-\frac{n+1}{n-1}\right)^2>0.
		$$
		Then there exists a constant $\alpha_2=\alpha_2(n,p,q,r)\geq 3/2$, if we choose $\alpha\geq \alpha_2$, we have
		$$
		\beta_{n,p,q,r,\alpha}:=\frac{1}{n-1}-\frac{1}{4B_{n,p,q,\alpha}}\left(c+\frac{2}{n-1}\right)^2>0,
		$$
		then we have

		\begin{align}
			\label{equa:3.15a}
			\begin{split}
				\frac{f^{2-\alpha-\frac{p}{2}}}{2\alpha}
				\mathcal{L} (f^{\alpha}) \geq&\
				\beta_{n,p,q,r,\alpha} f^2
				+ \ric(\nabla u,\nabla u)
				+\left(p -\frac{2(p-1)}{n-1}\right)f u_{11}
				\\
				\geq&\
				\beta_{n,p,q,r,\alpha} f^2
				-(n-1)\kappa f
				-\frac{a_1}{2}f^{\frac{1}{2}} |\nabla f|,
			\end{split}
		\end{align}
		where
		$$
		a_1=\left|p -\frac{2(p-1)}{n-1}\right|.
		$$
		
		From now on, we choose $\alpha_0 = \max\{\alpha_1,\alpha_2\}$ and denote $\beta_{n,p,q,r}=\beta_{n,p,q,r,\alpha_0}$. We summarize the above two cases to obtain the following point-wise estimate
		\begin{align}\label{equa:3.15}
			\begin{split}
				\mathcal{L} (f^{\alpha_0 })
				\geq&
				2\alpha_0 \beta_{n,p,q,r} f^{\alpha_0 +\frac{p}{2} }
				-2\alpha_0 (n-1)\kappa  f^{\alpha_0 +\frac{p}{2}-1 }
				- \alpha_0  a_1   |\nabla f|f^{\alpha_0 +\frac{p-3}{2} },
			\end{split}
		\end{align}
		since
		$$
		\beta_{n,p,q,r}\leq \frac{1}{n-1}.
		$$
		Thus, we complete the proof of this lemma.
	\end{proof}
	
	\begin{lem}\label{lem3.2}
		Let $(M,g)$ be an $n$-dim complete Riemannian manifold with $\ric_g\geq -(n-1)\kappa g$ where $\kappa$ is a non-negative constant and $\Omega = B_R(o)\subset M$ be a geodesic ball. Assume that $v$ is a positive solution to the equation (\ref{equa1}), $u = -(p-1)\ln v$ and $f=|\nabla u|^2$. Then there exist constants $\beta_{n,p,q,r}$, $a_3,\, a_4$ and $a_5$ depending only on $n,p,q$ and $r$ such that for any $t\geq t_0 $, where $t_0$ is defined in \eqref{equa2.8}, and we have
		\begin{align*}
			\begin{split}
				& \beta_{n,p,q,r}\int_\Omega f^{\alpha_0+\frac{p}{2}+t}\eta^2 +
				\frac{a_3}{ t }e^{-t_0}V^{\frac{2}{n}}R^{-2}\left\|f^{\frac{\alpha_0+t-1}{2}+\frac{p}{4} }\eta\right\|_{L^{\frac{2n}{n-2}}}^2\\
				\leq\  & a_5t_0^2R^{-2} \int_\Omega f^{\alpha_0+\frac{p}{2}+t-1}\eta^2+\frac{a_4}{t }\int_\Omega f^{\alpha_0+\frac{p}{2}+t-1}|\nabla\eta|^2,
			\end{split}
		\end{align*}
		where $\eta\in C^{\infty}_0(\Omega,\R)$.
	\end{lem}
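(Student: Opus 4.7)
The plan is to upgrade the pointwise estimate from \lemref{lem41} to an integral inequality by multiplying against $f^t\eta^2$ and integrating by parts, then to convert the resulting gradient-squared term into the target $L^{\frac{2n}{n-2}}$-norm via Saloff-Coste's inequality (\lemref{salof}).

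\textbf{Testing and integration by parts.} I multiply the pointwise lower bound for $\mathcal{L}(f^{\alpha_0})$ from \lemref{lem41} by the non-negative weight $f^t\eta^2$ and integrate over $\Omega$. Because $A(\nabla f^{\alpha_0})=\alpha_0 f^{\alpha_0-1}A(\nabla f)$, the divergence structure of $\mathcal{L}$ produces, after integration by parts, with $m:=\alpha_0+\tfrac{p}{2}+t-3$,
\begin{align*}
\int_\Omega\mathcal{L}(f^{\alpha_0})f^t\eta^2=-\alpha_0 t\int_\Omega f^m\la A(\nabla f),\nabla f\ra\eta^2-2\alpha_0\int_\Omega f^{m+1}\eta\la A(\nabla f),\nabla\eta\ra.
\end{align*}
Combining with the ellipticity controls $\min(1,p-1)|\nabla f|^2\leq\la A(\nabla f),\nabla f\ra$ and $|\la A(\nabla f),\nabla\eta\ra|\leq\max(1,p-1)|\nabla f||\nabla\eta|$, and the pointwise lower bound of \lemref{lem41}, I obtain a base inequality whose left-hand side carries the two good terms $2\alpha_0\beta_{n,p,q,r}\int f^{m+3}\eta^2$ and $\alpha_0 t\min(1,p-1)\int f^m|\nabla f|^2\eta^2$, while the right-hand side collects a cross term $\int f^{m+1}|\nabla f|\eta|\nabla\eta|$, a Ricci term $\kappa\int f^{m+2}\eta^2$, and the $a_1$-term $\int f^{m+3/2}|\nabla f|\eta^2$.

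\textbf{Absorbing the gradient cross-terms.} Each $|\nabla f|$-integral on the right is split by Young's inequality with parameter of order $t$, e.g.\ $f^{m+3/2}|\nabla f|\eta^2\leq\delta f^m|\nabla f|^2\eta^2+\tfrac{1}{4\delta}f^{m+3}\eta^2$ with $\delta\sim t$. Half of the gradient-squared term $\alpha_0 t\min(1,p-1)\int f^m|\nabla f|^2\eta^2$ then absorbs all of these pieces, and the residual $f^{m+3}$-contribution is of order $1/t$; for $t\geq t_0$ with $t_0$ sufficiently large depending only on $n,p,q,r$, the residual is swallowed into $\alpha_0\beta_{n,p,q,r}\int f^{m+3}\eta^2$. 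The cross-term $f^{m+1}|\nabla f|\eta|\nabla\eta|$ produces, by the same splitting, a contribution of the form $\frac{C}{t}\int f^{\alpha_0+p/2+t-1}|\nabla\eta|^2$, which is exactly the $a_4$-shape on the right-hand side of the claim.

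\textbf{From gradient square to Sobolev norm.} Set $w:=f^{(\alpha_0+t-1)/2+p/4}$, so that $|\nabla w|^2=s^2 f^m|\nabla f|^2$ with $s=(\alpha_0+t-1)/2+p/4\sim t/2$. The surviving gradient-squared term on the left is then $\frac{\alpha_0 t\min(1,p-1)}{2s^2}\int|\nabla w|^2\eta^2\sim\frac{c}{t}\int|\nabla w|^2\eta^2$. Using $|\nabla(w\eta)|^2\leq 2|\nabla w|^2\eta^2+2w^2|\nabla\eta|^2$ and applying \lemref{salof} to $w\eta$,
\begin{align*}
\int|\nabla w|^2\eta^2\geq\tfrac{1}{2}e^{-c_0(1+\sqrt\kappa R)}V^{\frac{2}{n}}R^{-2}\|w\eta\|_{L^{\frac{2n}{n-2}}}^2-\int w^2|\nabla\eta|^2-\tfrac12 R^{-2}\int w^2\eta^2.
\end{align*}
Since $w^2=f^{\alpha_0+p/2+t-1}$, substitution produces the Sobolev term $\frac{a_3}{t}e^{-t_0}V^{2/n}R^{-2}\|w\eta\|_{L^{2n/(n-2)}}^2$ on the left (with the identification $t_0=c_0(1+\sqrt\kappa R)$), while the two correction integrals feed into the $\frac{a_4}{t}\int f^{\alpha_0+p/2+t-1}|\nabla\eta|^2$ and $a_5 t_0^2 R^{-2}\int f^{\alpha_0+p/2+t-1}\eta^2$ pieces. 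The Ricci term from Step~1 fits into the latter via $\kappa\leq t_0^2/(c_0^2R^2)$.

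\textbf{Main obstacle.} The principal technical difficulty is the simultaneous, balanced use of Young's inequality on the three $|\nabla f|$-type integrals with slightly different $f$-powers and $\eta$-weights, ensuring that (i) the absorption into the LHS gradient-squared term leaves exactly the coefficient $\beta_{n,p,q,r}$ in front of $\int f^{\alpha_0+p/2+t}\eta^2$, (ii) the by-products are of order $1/t$ with constants depending only on $n,p,q,r$, and (iii) the choice of $t_0$ simultaneously dominates the Ricci contribution, the exponent of Saloff-Coste, and the ``bootstrap'' absorption of the $1/t$-residual, so that the resulting inequality is in a form suitable for Nash-Moser iteration in the next subsection.
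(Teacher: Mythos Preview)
Your proposal is correct and follows essentially the same route as the paper: test the pointwise bound of \lemref{lem41} against $f^{t}\eta^{2}$, integrate by parts using the ellipticity of $A$, absorb the $|\nabla f|$-cross terms via Young with parameters of order $t$, rewrite the surviving gradient-squared integral as $\sim t^{-1}\int|\nabla(f^{(\alpha_0+t-1)/2+p/4}\eta)|^{2}$, and then invoke Saloff--Coste to produce the $L^{2n/(n-2)}$-term; finally choose $t_0\sim(1+\sqrt\kappa R)$ so that it simultaneously dominates the Sobolev-constant exponent, the Ricci term, and the $1/t$-residual needed for the absorption into $\beta_{n,p,q,r}\int f^{\alpha_0+p/2+t}\eta^2$. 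The only point the paper adds that you omit is a regularization step: it tests against $f_{\epsilon}^{t}\eta^{2}$ with $f_{\epsilon}=(f-\epsilon)^{+}$ and lets $\epsilon\downarrow 0$, to keep the computation inside $\{f>0\}$ where $u$ is smooth and the weight $f^{p/2-1}$ is nondegenerate; this is a technicality but worth including.
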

	
	\begin{proof}
		By the regularity theory on elliptic equations, $u$ is smooth away from $\{f=0\}$. So the both sides of \eqref{equa:3.15} are in fact smooth. Let $\epsilon>0$ and $\psi = f_\epsilon^{\alpha}\eta^2 $, where $f_\epsilon = (f-\epsilon)^+$, $\eta\in C^{\infty}_0(B_R(o))$ is non-negative and $\alpha>1$ which will be determined later. Now, we multiply the both sides of \eqref{equa:3.15} by $\psi$, integrate then the obtained inequality over $\Omega$ and take a direct computation to get
		\begin{align}
			\label{equa:3.17}
			\begin{split}
				&-\int_\Omega \alpha_0 tf^{\alpha_0+\frac{p}{2}-2}f_{\epsilon}^{t-1}|\nabla f|^2\eta^2
				+
				t\alpha_0(p-2)f^{\alpha_0+\frac{p}{2}-3}f_{\epsilon}^{t-1}\la\nabla f,\nabla u\ra^2\eta^2
				\\
				&-\int_\Omega2\eta\alpha_0 f^{\alpha+\frac{p}{2} -2}f_{\epsilon}^t\la\nabla f,\nabla\eta\ra+2\alpha_0\eta(p-2)f^{\alpha+\frac{p}{2}-3}f_{\epsilon}^t\la\nabla f,\nabla u\ra\la\nabla u,  \nabla\eta\ra
				\\
				\geq &
				2\beta_{n,p,q,r}\alpha_0 \int_\Omega f^{\alpha_0+\frac{p}{2}}f_{\epsilon}^t\eta^2  -2(n-1)\alpha_0\kappa\int_\Omega f^{\alpha_0+\frac{p}{2}-1}f_{\epsilon}^t\eta^2
				- a_1\alpha_0\int_\Omega f^{\alpha_0+\frac{p-3}{2}}f_{\epsilon}^t|\nabla f|\eta^2.
			\end{split}
		\end{align}
		It is easy to see that
		\begin{align}\label{2.24}
			f_{\epsilon}^{t-1}|\nabla f|^2 +(p-2)f_{\epsilon}^{t-1}f^{-1}\la\nabla f,\nabla u\ra^2\geq a_2 f_{\epsilon}^{t-1}|\nabla f|^2,
		\end{align}
		where $a_2 = \min\{1, p-1\}$, and
		\begin{align}\label{2.25}
			f_{\epsilon}^t\la\nabla f,\nabla\eta\ra+ (p-2)f_{\epsilon}^tf^{-1}\la\nabla f,\nabla u\ra\la\nabla u,  \nabla\eta\ra\geq -(p+1)f_{\epsilon}^t |\nabla f||\nabla\eta|.
		\end{align}
		Substituting \eqref{2.24}) and \eqref{2.25} into \eqref{equa:3.17} and letting $\epsilon\to 0$, we obtain
		\begin{align}
			\label{2.26}
			\begin{split}
				&2\beta_{n,p,q,r}\int_\Omega f^{\alpha_0+\frac{p}{2}+t}\eta^2
				+
				\int_\Omega a_2  tf^{\alpha_0+\frac{p}{2}+t-3}|\nabla f|^2\eta^2\\
				\leq &~
				2(n-1) \kappa\int_\Omega f^{\alpha_0+\frac{p}{2}+t-1}\eta^2
				+ a_1 \int_\Omega f^{\alpha_0+\frac{p-3}{2}+t }|\nabla f|\eta^2
				\\
				&
				+2 (p-1)\int_\Omega  f^{\alpha_0+\frac{p}{2}+t-2}|\nabla f||\nabla\eta|\eta,
		\end{split}\end{align}
		where we have divided the both sides of the inequality by $\alpha_0$.
		
		Using Cauchy-inequality, we have
		\begin{align}\label{2.27}
			\begin{split}
				a_1 f^{\alpha_0+\frac{p-3}{2}+t }|\nabla f|\eta^2\leq &\
				\frac{a_2t}{4}    f^{\alpha_0+\frac{p}{2}+t-3}|\nabla f|^2\eta^2
				+\frac{ a_1^2}{a_2t} f^{\alpha_0+\frac{p}{2}+t}\eta^2,\\
			\end{split}
		\end{align}
		and
		\begin{align}\label{2.27*}
			\begin{split}
				2(p+1)f^{\alpha_0+\frac{p}{2}+t-2}|\nabla f||\nabla\eta|\eta\leq &\
				\frac{a_2t}{4}    f^{\alpha_0+\frac{p}{2}+t-3}|\nabla f|^2\eta^2
				+\frac{4(p+1)^2 }{a_2t} f^{\alpha_0+\frac{p}{2}+t-1}|\nabla \eta|^2.
			\end{split}
		\end{align}
		Now we choose $t$ large enough such that
		\begin{align}\label{2.28}
			\frac{a_1^2}{a_2t}\leq  \beta_{n,p,q,r}.
		\end{align}
		It follows from (\ref{2.26}), (\ref{2.27}), \eqref{2.27*} and (\ref{2.28}) that
		\begin{align}
			\label{2.29}
			\begin{split}
				& \beta_{n,p,q,r}\int_\Omega f^{\alpha_0+\frac{p}{2}+t}\eta^2
				+
				\frac{a_2  t}{2}\int_\Omega f^{\alpha_0+\frac{p}{2}+t-3}|\nabla f|^2\eta^2 \\
				\leq  &\
				2(n-1) \kappa\int_\Omega f^{\alpha_0+\frac{p}{2}+t-1}\eta^2
				+\frac{4(p+1)^2 }{a_2t}  \int_\Omega f^{\alpha_0+\frac{p}{2}+t-1}|\nabla \eta|^2.
			\end{split}
		\end{align}
		
		On the other hand, we have
		\begin{align}\label{2.30}
			\begin{split}
				\left|\nabla \left(f^{\frac{\alpha_0+t-1}{2}+\frac{p}{4} }\eta \right)\right|^2\leq &\ 2\left|\nabla f^{\frac{\alpha_0+t-1}{2}+\frac{p}{4} }\right|^2\eta^2 +2f^{\alpha_0+t-1+\frac{p}{2}}|\nabla\eta|^2\\
				=&\ \frac{(\alpha_0+t+\frac{p}{2}-1)^2}{2}f^{\alpha_0+t+\frac{p}{2}-3}|\nabla f |^2\eta^2 +2f^{\alpha_0+t-1+\frac{p}{2}}|\nabla\eta|^2  .
			\end{split}
		\end{align}
		Now substituting \eqref{2.30} into \eqref{2.29} leads to
		\begin{align}\label{2.31}
			\begin{split}
				& \beta_{n,p,q,r} \int_\Omega f^{\alpha_0+\frac{p}{2}+t}\eta^2 + \frac{4a_2t}{(2\alpha_0+2t+p-2)^2}\int_\Omega \left|\nabla \left(f^{\frac{\alpha_0+t-1}{2}+\frac{p}{4} }\eta\right)\right|^2 \\
				\leq  &\ 2(n-1)\kappa  \int_\Omega f^{\alpha_0+t+\frac{p}{2}-1}\eta^2
				+ \frac{4(p-1)^2 }{a_2t} \int_\Omega f^{\alpha_0+\frac{p}{2}+t-1}|\nabla\eta|^2\\
				&\ +\frac{8a_2t}{(2\alpha_0+2t+p-2)^2}\int_\Omega f^{\alpha_0+t+\frac{p}{2}-1}|\nabla\eta|^2 .
			\end{split}
		\end{align}
		
		From now on we use $a_1,\, a_2,\, a_3\, \cdots$ to denote constants depending only on $n,\, p,\, q$ and $r$. We choose $a_3$ and $a_4$ such that
		\begin{align}\label{2.32}
			\frac{a_3}{t}\leq  \frac{4a_2t}{(2\alpha_0+2t+p-2)^2}\quad\text{and}\quad\frac{8a_2t}{(2\alpha_0+2t+p-2)^2}+\frac{4(p-1)^2}{a_2t} \leq\frac{a_4}{t},
		\end{align}
		since $t$ satisfies (\ref{2.28}).
		
		It follows from (\ref{2.31}) and (\ref{2.32}) that
		\begin{align}
			\label{2.33}
			\begin{split}
				& \beta_{n,p,q,r}\int_\Omega f^{\alpha_0+\frac{p}{2}+t}\eta^2
				+
				\frac{a_3}{t}\int_\Omega   \left|\nabla \left(f^{\frac{\alpha_0+t-1}{2}+\frac{p}{4} }\eta\right)\right|^2 \\
				\leq  &\
				2(n-1)\kappa  \int_\Omega f^{\alpha_0+t+\frac{p}{2}-1}\eta^2
				+
				\frac{a_4 }{t} \int_\Omega f^{\alpha_0+\frac{p}{2}+t-1}\left|\nabla\eta\right|^2 .
			\end{split}
		\end{align}
		Noting that Saloff-Coste's Sobolev inequality implies
		$$e^{-c_0(1+\sqrt{\kappa}R)}V^{\frac{2}{n}}R^{-2}\left\|f^{\frac{\alpha_0+t-1}{2}+\frac{p}{4} }\eta\right\|_{L^{\frac{2n}{n-2}}(\Omega)}^2\leq \int_{\Omega}\left|\nabla \left(f^{\frac{\alpha_0+t-1}{2}+\frac{p}{4} }\eta\right)\right|^2+R^{-2}\int_\Omega f^{ \alpha_0+t +\frac{p}{2} -1 }\eta ^2,$$
		furthermore we can infer from \eqref{2.33} that
		\begin{align}\label{3.32}
			\begin{split}
				& \beta_{n,p,q,r}\int_\Omega f^{\alpha_0+\frac{p}{2}+t}\eta^2
				+
				\frac{a_3}{t }e^{-c_0(1+\sqrt{\kappa}R)}V^{\frac{2}{n}}R^{-2}\left\|f^{\frac{\alpha_0+t-1}{2}+\frac{p}{4} }\eta\right\|_{L^{\frac{2n}{n-2}}}^2\\
				\leq  &\
				2(n-1)\kappa  \int_\Omega f^{\alpha_0+t+\frac{p}{2}-1}\eta^2+\frac{a_4}{ t }\int_\Omega f^{\alpha_0+t+\frac{p}{2}-1}|\nabla\eta|^2
				+\frac{a_3}{ t }\int_\Omega R^{-2}f^{ \alpha_0 +\frac{p}{2}+t-1 }\eta ^2.
			\end{split}
		\end{align}

		Now we choose
		\begin{align}\label{equa2.8}
			t_0 = c_{n,p,q,r}\left(1+\sqrt{\kappa} R\right) \quad\text{and}\quad c_{n,p,q,r}=\max\left\{c_0+1, \frac{a_1^2}{a_2\beta_{n,p,q,r}} \right\},
		\end{align}
		and pick $t$ such that $t\geq t_0$. Since
		\begin{align*}
			2(n-1)\kappa  R^2\leq\frac{ 2(n-1)}{c_{n,p,q,r}^2}t_0^2\quad \text{ and}\quad \frac{a_3}{t}\leq \frac{a_3}{c_{n,p,q,r}},
		\end{align*}
		there exists $a_5 = a_5(n,p,q,r)>0$ such that
		\begin{align}\label{a5}
			2(n-1)\kappa  R^2+\frac{a_3}{t}\leq a_5t_0^2 = a_5c^2_{n,p,q,r}\left(1+\sqrt{\kappa} R\right)^2.
		\end{align}
		Immediately, it follows from \eqref{3.32} and \eqref{a5} that
		\begin{align}\label{2.34}
			\begin{split}
				& \beta_{n,p,q,r}\int_\Omega f^{\alpha_0+\frac{p}{2}+t}\eta^2 + \frac{a_3}{ t }e^{-t_0}V^{\frac{2}{n}}R^{-2}\left\|f^{\frac{\alpha_0+t-1}{2}+\frac{p}{4} }\eta\right\|_{L^{\frac{2n}{n-2}}}^2\\
				\leq\  & a_5t_0^2R^{-2} \int_\Omega f^{\alpha_0+\frac{p}{2}+t-1}\eta^2+\frac{a_4}{t }\int_\Omega f^{\alpha_0+\frac{p}{2}+t-1}|\nabla\eta|^2.
			\end{split}
		\end{align}
		Thus, we finish the proof.
	\end{proof}

	\subsection{\texorpdfstring{$L^{\beta}$}\, bound of gradient in a geodesic ball with radius $3R/4$}\label{sect3.2}
	
	\begin{lem}\label{lpbound}
		Let $(M,g)$ be an $n$-dimensional complete Riemannian manifold with Ricci curvature $\ric_g\geq -(n-1)\kappa g$ where $\kappa$ is a non-negative constant and $\Omega = B_R(o)\subset M$ is a geodesic ball. Assume that $f$ is the same as in the above lemma. For a given positive number $$\beta = \left(\alpha_0+t_0+\frac{p}{2}-1\right)\frac{n}{n-2},$$
		there exists a constant $a_8 = a_8(n,p, q)>0$ such that
		\begin{align}\label{lpbpund}
			\|f \|_{L^{\beta}(B_{3R/4}(o))}\leq a_8V^{\frac{1}{\beta}} \frac{t_0^2}{ R^2},
		\end{align}
		where $V$ is the volume of geodesic ball $B_R(o)$.
	\end{lem}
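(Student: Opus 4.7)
The plan is to apply \lemref{lem3.2} with $t=t_0$ and convert the resulting Caccioppoli-type inequality into the desired $L^\beta$ bound via a combination of Young's inequality, H\"older's inequality, and a Moser-type iteration on nested balls. Set $s := \alpha_0 + t_0 + p/2$, so that $\beta = (s-1)n/(n-2)$. This value of $\beta$ is exactly the exponent of $f$ that appears when one expands the Sobolev term $\|f^{(\alpha_0+t_0-1)/2+p/4}\eta\|^2_{L^{2n/(n-2)}}$ on the left-hand side of the integral inequality in \lemref{lem3.2}; consequently, for any cutoff $\eta$ with $\eta\equiv 1$ on $B_{3R/4}$, this Sobolev term bounds $\tfrac{a_3 e^{-t_0}V^{2/n}}{t_0 R^2}\|f\|^{s-1}_{L^\beta(B_{3R/4})}$ from below.

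First I would introduce a sequence of nested radii $\rho_k = 3R/4 + R/(4\cdot 2^k)$ together with cutoffs $\eta_k\in C_0^\infty(B_{\rho_k})$ satisfying $\eta_k\equiv 1$ on $B_{\rho_{k+1}}$, $0\leq \eta_k\leq 1$, and $|\nabla\eta_k|\leq c\cdot 2^k/R$. Applying \lemref{lem3.2} at $t=t_0$ with $\eta=\eta_k$ and using the pointwise Young inequality $f^{s-1}\leq \lambda f^s + C\lambda^{-(s-1)}$ with a parameter $\lambda=\lambda_k$ of order $R^2/(t_0^2 + 4^k/t_0)$, the $\lambda f^s$ contribution on the right is absorbed into $\beta_{n,p,q,r}\int f^s \eta_k^2$ on the left, while the $\lambda^{-(s-1)}$ contribution supplies an explicit constant of order $(t_0^2/R^2)^s V$. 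Setting $A_k := \int_{B_{\rho_k}} f^s$ one obtains a recursion of the shape $A_{k+1}\leq \theta A_k + C_k V$ with $\theta<1$; summing the resulting geometric series produces an auxiliary $L^s$ estimate $\int_{B_{3R/4}} f^s \leq C(n,p,q,r)(t_0^2/R^2)^s V$.

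To upgrade the $L^s$ bound to the $L^\beta$ bound, apply \lemref{lem3.2} one more time with a new cutoff $\eta\equiv 1$ on $B_{3R/4}$ supported in the slightly larger ball $B_{\rho_1}$. The Sobolev term on the left then controls $\tfrac{a_3 e^{-t_0}V^{2/n}}{t_0 R^2}\|f\|_{L^\beta(B_{3R/4})}^{s-1}$, while the right-hand side is estimated by H\"older's inequality with the $L^s$ bound just obtained, using $\int f^{s-1}\eta^2\leq (\int f^s\eta^2)^{(s-1)/s}V^{1/s}$ and an analogous estimate for $\int f^{s-1}|\nabla\eta|^2$ (together with $|\nabla\eta|^2\leq C/R^2$). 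Rearranging and taking the $(s-1)$-th root yields the advertised bound $\|f\|_{L^\beta(B_{3R/4})}\leq a_8 V^{1/\beta}t_0^2/R^2$, once the $e^{t_0}$ factor produced by inverting the Saloff-Coste exponential in the Sobolev constant is reconciled with the analogous $e^{(n-1)\sqrt{\kappa}R}$ factor built into $V$ by Bishop-Gromov.

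The main obstacle is the careful balancing of constants in the nested-ball iteration of the second paragraph: the factor $4^k$ from $|\nabla\eta_k|^2$ competes with the absorbing coefficient $\theta$, and the parameters $\lambda_k$ must be chosen so that the cumulative constants arising from the Young step sum to the target $(t_0^2/R^2)^s V$ rather than growing geometrically with $k$. A subsidiary point is that the final constant $a_8$ must depend only on $n,p,q,r$, which forces the $e^{\pm t_0}$ factors coming from Saloff-Coste and from the volume growth to cancel cleanly.
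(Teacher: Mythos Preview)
Your nested-ball iteration in the second paragraph does not close. With $|\nabla\eta_k|^2 \leq c\,4^k/R^2$, the gradient term on the right of \lemref{lem3.2} forces $\lambda_k$ of the order you state, and then the Young remainder scales like $\lambda_k^{-(s-1)}$ with $s = \alpha_0 + t_0 + p/2$. The constants $C_k$ in the recursion $A_{k+1} \leq \theta A_k + C_k V$ therefore grow like $4^{ks}$; since $4^s \gg 1/\theta$, the partial sums $\sum_{j<m}\theta^{m-1-j}C_j$ diverge as $m\to\infty$. You cannot stop at finite $m$ either, because the surviving term $\theta^m A_0 = \theta^m \int_{B_R} f^s$ depends on the unknown solution and is not universally bounded. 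Replacing Young by H\"older on the gradient term does not help: the resulting logarithmic recursion still accumulates a contribution of order $m\cdot s$. The root difficulty is that $s$ is large (of order $t_0$), so any penalty carrying the exponent $s-1$ is unaffordable inside an infinite iteration over shrinking annuli.

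The paper avoids any iteration at this stage and obtains the $L^\beta$ bound in a single step. The term $a_5 t_0^2 R^{-2}\int f^{s-1}\eta^2$ is handled by splitting the domain at the level $f = 2a_5 t_0^2/(\beta_{n,p,q,r}R^2)$: on the super-level set it is absorbed pointwise into $\tfrac12\beta_{n,p,q,r}\int f^s\eta^2$, while on the sub-level set it is bounded by $C(t_0^2/R^2)^{s}V$ directly. For the gradient term the key device is the choice $\eta = \gamma^{s}$ with $\gamma$ a standard cutoff, so that $|\nabla\eta|^2 \leq (Cs/R)^2 \eta^{2(s-1)/s}$; a single H\"older--Young step then gives $\tfrac{a_4}{t_0}\int f^{s-1}|\nabla\eta|^2 \leq \tfrac12\beta_{n,p,q,r}\int f^s\eta^2 + C(t_0^2/R^2)^{s}V$, and the Sobolev term on the left yields \eqref{lpbpund} directly. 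Finally, the factor $e^{t_0}$ from Saloff--Coste is not cancelled against Bishop--Gromov volume growth as you suggest; it disappears because the whole inequality is raised to the power $1/(s-1)$ and $t_0/(s-1)\leq 1$, so $(e^{t_0})^{1/(s-1)}$ is bounded by a universal constant.
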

	
	\begin{proof}
		We set $t=t_0$ in \eqref{2.34} and obtain
		\begin{align}\label{equation:3.31}
			\begin{split}
				& \beta_{n,p,q,r}\int_\Omega f^{\alpha_0+\frac{p}{2}+t_0}\eta^2 + \frac{a_3}{ t_0 } e^{-t_0}V^{\frac{2}{n}}R^{-2}\left\|f^{\frac{\alpha_0+t_0-1}{2}+\frac{p}{4}}\eta\right\|_{L^{\frac{2n}{n-2}}}^2\\
				\leq  &\ a_5t_0^2R^{-2} \int_\Omega f^{\alpha_0+\frac{p}{2}+t_0-1}\eta^2+\frac{a_4}{t_0 }\int_\Omega f^{\alpha_0+\frac{p}{2}+t_0-1}|\nabla\eta|^2.
			\end{split}
		\end{align}
		From (\ref{equation:3.31}) we know that, if
		$$
		f\geq  \frac{2a_{5}t_0^2}{\beta_{n,p,q,r}R^2},
		$$
		then there holds true
		$$
		a_5t_0^2R^{-2} \int_\Omega f^{\alpha+\frac{p}{2}+t_0-1}\eta^2\leq\frac{\beta_{n,p,q,r}}{2}\int_\Omega f^{\alpha_0+\frac{p}{2}+t_0}\eta^2.
		$$
		Now we denote $$\Omega_1 = \left\{f<  \frac{2a_{5}t_0^2}{\beta_{n,p,q,r}R^2} \right\},$$
		then $\Omega$ can be decomposed into two parts $\Omega_1$ and $\Omega_2$, i.e., $\Omega=\Omega_1\cup\Omega_2$ where $\Omega_2$ is the complement of $\Omega_1$. Hence, we have
		\begin{align}\label{2.36}
			\begin{split}
				a_5t_0^2R^{-2} \int_\Omega f^{\alpha_0+\frac{p}{2}+t_0-1}\eta^2
				=&\ a_5t_0^2R^{-2} \int_{\Omega_1} f^{\alpha_0+\frac{p}{2}+t_0-1}\eta^2
				+a_5t_0^2R^{-2} \int_{\Omega_2} f^{\alpha_0+\frac{p}{2}+t_0-1}\eta^2
				\\
				\leq&\
				\frac{a_5t_0^2}{R^2} \left(\frac{2a_{5}t_0^2}{\beta_{n,p,q,r}R^2}\right)^{\alpha_0+\frac{p}{2} +t_0-1 }V
				+
				\frac{\beta_{n,p,q,r} }{2}\int_\Omega f^{\alpha_0+\frac{p}{2}+t_0}\eta^2,
			\end{split}
		\end{align}
		where $V$ is the volume of $B_R(o)$. It follows from (\ref{2.34}) and (\ref{2.36})
		\begin{align}
			\label{2.37}
			\begin{split}
				&\frac{\beta_{n,p,q,r}}{2}\int_\Omega f^{\alpha_0+\frac{p}{2}+t_0}\eta^2
				+
				\frac{a_3}{ t_0 }e^{-t_0}V^{\frac{2}{n}}R^{-2}\left\|f^{\frac{\alpha_0+t_0-1}{2}+\frac{p}{4} }\eta\right\|_{L^{\frac{2n}{n-2}}}^2\\
				\leq \ &
				\frac{a_5t_0^2}{R^2} \left(\frac{2a_{5}t_0^2}{\beta_{n,p,q,r}R^2}\right)^{\alpha_0+\frac{p}{2} +t_0-1 }V
				+\frac{a_4}{ t_0 }\int_\Omega f^{\alpha_0+\frac{p}{2}+t_0-1}|\nabla\eta|^2.
			\end{split}
		\end{align}
		
		Now we choose $\gamma\in C^{\infty}_0(B_R(o))$ satisfying
		$$
		\begin{cases}
			0\leq\gamma(x)\leq 1,\quad |\nabla\gamma(x)|\leq\frac{C }{R}, &\forall x\in B_R(o);\\
			\gamma(x)\equiv 1, & \forall x\in B_{ {3R}/{4}}(o),
		\end{cases}
		$$
		and let $\eta = \gamma^{ \alpha_0 + \frac{p}{2}+t_0}$. Then, we have
		\begin{align}\label{314}
			a_4R^2 |\nabla\eta|^2
			\leq
			a_4 C^2 \left( t_0+ \frac{p}{2}+\alpha_0\right )^2\eta ^{\frac{2\alpha_0+2t_0+p-2}{\alpha_0+p/2+t_0}}
			\leq
			a_{6}t^2_0\eta^{\frac{2\alpha_0+2t_0+p-2}{\alpha_0+p/2+t_0}}.
		\end{align}
		By H\"older inequality and Young inequality, we have
		\begin{align}
			\label{2.39}
			\begin{split}
				\frac{a_4}{t_0}\int_{\Omega}f^{\frac{p}{2}+\alpha_0+t_0-1}|\nabla\eta|^2
				\leq\  &\frac{a_6t_0}{R^2} \int_{\Omega}f^{\frac{p}{2}+\alpha_0+t_0-1}
				\eta^{\frac{2\alpha_0+p+2t_0-2}{\alpha_0+p/2+t_0}}
				\\
				\leq\
				&
				\frac{a_6t_0}{R^2}  \left(\int_{\Omega}f^{\alpha_0+t_0+ \frac{p}{2} }\eta^2\right)^{\frac{\alpha_0+p/2+t_0-1}{\alpha+p/2+t_0}}V^{\frac{ 1}{\alpha_0+t_0+ p/2 }}\\
				\leq \ &
				\frac{\beta_{n,p,q,r}}{2}\left[\int_{\Omega}f^{ \alpha_0+t_0+\frac{p}{2} }\eta^2
				+
				\left(\frac{2a_{6}t_0 }{\beta_{n,p,q,r}R^2}\right)^{ \alpha_0+t_0+\frac{p}{2} }V\right].
			\end{split}
		\end{align}
		We conclude from (\ref{2.37}) and (\ref{2.39}) that
		\begin{align}
			\label{2.40}
			\begin{split}
				&\left(\int_{\Omega}f^{\frac{n(p/2+\alpha_0+t_0-1)}{n-2}}\eta^{\frac{2n}{n-2}}\right)^{\frac{n-2}{n}}
				\\
				\leq\
				&
				\frac{t_0}{a_3} e^{t_0}V^{1-\frac{2}{n}}R^2
				\left[\frac{2a_5t_0^2}{R^2} \left(\frac{2a_{5}t_0^2}{\beta_{n,p,q,r}R^2}\right)^{\alpha_0+t_0+\frac{p}{2}-1 }
				+
				\frac{a_{6}t^2_0}{R^2} \left(\frac{2a_{6}t_0 }{\beta_{n,p,q,r}R^2}\right)^{\alpha_0+t_0+\frac{p}{2} -1 }\right]
				\\
				\leq\
				&
				a_7^{t_0}e^{t_0}V^{1-\frac{2}{n}}t_0^3
				\left( \frac{t_0^2}{ R^2}\right)^{\alpha_0+t_0+\frac{p}{2} -1},
			\end{split}
		\end{align}
		where the constant $a_7$ depends only on $n,\,p,\, q$ and $r$, and satisfies
		$$
		a_7^{t_0} \geq \frac{2a_5}{a_3}\left(\frac{4a_5}{\beta_{n,p,q,r}}\right)^{\alpha_0+t_0+\frac{p}{2}-1}
		+\frac{a_6}{a_3}\left(\frac{4a_6}{\beta_{n,p,q,r}t_0}\right)^{\alpha_0+t_0+\frac{p}{2}-1}.
		$$
		Here we have used the fact $t_0\geq1$.
		
		Taking $\frac{1}{\alpha_0+t_0+p/2-1}$ power of the both sides of (\ref{2.40}) gives
		\begin{align}
			\left\|f\eta^{\frac{2}{\alpha_0+t_0+p/2-1}}\right\|_{L^{\beta}(\Omega)}\leq a_7^{\frac{t_0}{\alpha_0+t_0+p/2-1}}V^{1/\beta}
			t_0^{\frac{3}{\alpha_0+t_0+p/2-1}}\frac{t_0^2}{ R^2}\leq a_8V^{\frac{1}{\beta}} \frac{t_0^2}{ R^2},
		\end{align}
		where $a_8$ depends only on $n,\ p,\ q,\ r$ and satisfies
		$$
		a_8 \geq a_7^{\frac{t_0}{\alpha_0+t_0+p/2-1}}t_0^{\frac{3}{\alpha_0+t_0+p/2-1}}.
		$$
		Since $\eta\equiv1$ in $B_{3R/4}$, we obtain that
		$$
		\|f \|_{L^{\beta}(B_{3R/4}(o))}\leq a_8V^{\frac{1}{\beta}} \frac{t_0^2}{ R^2}.
		$$
		Thus, the proof of this lemma is finished.
	\end{proof}

	\subsection{Moser iteration}\label{sec3.3}
	Now we turn to providing the proof of \thmref{thm1}.
	\begin{proof}
		We discard the first term in (\ref{2.34}) to obtain
		\begin{align}\label{2.42}
			\frac{a_3}{ t }e^{-t_0}V^{\frac{2}{n}}R^{-2}\left\|f^{\frac{\alpha_0+t-1}{2}+\frac{p}{4} }\eta\right\|_{L^{\frac{2n}{n-2}}}^2\leq
			\frac{a_5\alpha_0^2}{R^2}  \int_\Omega f^{\alpha_0+\frac{p}{2}+t-1}\eta^2+\frac{a_4}{ t }\int_\Omega f^{\alpha_0+\frac{p}{2}+t-1}|\nabla\eta|^2.
		\end{align}
		For $k=1,\, 2,\, \cdots$, let $$r_k = \frac{R}{2}+\frac{R}{4^k}$$ and $\Omega_k = B_{r_k}(o)$. Choosing a sequence of cut-off function $\eta_k\in C^{\infty}(\Omega_k)$ which satisfy
		\begin{align}
			\begin{cases}
				0\leq \eta_k(x)\leq1, \quad |\nabla\eta_k(x)|\leq \frac{C4^k}{R}, & \forall x\in B_{r_{k+1}}(o);\\
				\eta_k(x)\equiv 1, &\forall x\in B_{r_{k+1}}(o),
			\end{cases}
		\end{align}
		where $C$ is a constant that does not depend on $(M,g)$ and equation \eqref{equa1}, and substituting $\eta_k$ into (\ref{2.42}) instead of $\eta$, we arrive at
		\begin{align*}
			a_3e^{-t_0}V^{\frac{2}{n}} \left\|f^{\frac{\alpha_0+t-1}{2}+\frac{p}{4} }\eta_k\right\|_{L^{\frac{2n}{n-2}}(\Omega_k)}^2
			\leq \ &
			a_5t_0^2t\int_{\Omega_k} f^{\alpha_0+\frac{p}{2}+t-1}\eta_k^2+ a_4R^2\int_{\Omega_k} f^{\alpha_0+\frac{p}{2}+t-1}\left|\nabla\eta_k\right|^2\\
			\leq\ & \left(a_5t_0^2t + C^216^k\right)\int_{\Omega_k} f^{\alpha_0+\frac{p}{2}+t-1}.
		\end{align*}
		Now we choose $\beta_1=\beta$, $\beta_{k+1}=n\beta_k/(n-2)$, $k=1,\,2,\, \cdots$, and let $t=t_k$ such that
		$$t_k+\frac{p}{2}+\alpha_0-1=\beta_k,$$
		then we have
		\begin{align*}
			a_3 \left(\int_{\Omega_k}f^{\beta_{k+1}}\eta_k^\frac{2n}{n-2}\right)^{\frac{n-2}{n}} \leq\
			e^{t_0}V^{-\frac{2}{n}}\left(a_5t_0^2\left(t_0+\alpha_0+\frac{p}{2}-1\right)\left(\frac{n}{n-2}\right)^k + C^216^k\right)\int_{\Omega_k} f^{\beta_k}.
		\end{align*}
		Since $n/(n-2)<16, ~\forall n>2$, there exists some constant $a_9 $ such that
		\begin{align}
			\label{eq:2.45}
			\left(\int_{\Omega_k}f^{\beta_{k+1}}\eta_k^\frac{2n}{n-2}\right)^{\frac{n-2}{n}}
			\leq \ a_9t_0^316^k e^{ t_0}V^{-\frac{2}{n}}  \int_{\Omega_k} f^{\beta_k}.
		\end{align}
		Taking $1/\beta_k$ power of the both sides of (\ref{eq:2.45}) respectively and using the fact $\eta_k\equiv1\in\Omega_{k+1}$, we obtain
		\begin{align}\label{2.46}
			\begin{split}
				\|f\|_{L^{\beta_{k+1}}(\Omega_{k+1})}
				\leq & \left(a_9t_0^316^k e^{ t_0}V^{-\frac{2}{n}}\right)^{\frac{1}{\beta_k}} 16 ^{\frac{k}{\beta_k}}\|f\|_{L^{\beta_k}(\Omega_k)}.
			\end{split}
		\end{align}
		Noting
		$$
		\sum_{k=1}^{\infty}\frac{1}{\beta_k} =\frac{n}{2\beta_1} \quad\text{and}\quad \sum_{k=1}^{\infty}\frac{k}{\beta_k}=\frac{n^2}{4\beta_1},
		$$
		we have
		\begin{align}
			\label{2.47}
			\|f\|_{L^{\infty}(B_{R/2}(o))}
			\leq \
			a_{10}  V^{-\frac{1}{\beta}} \|f\|_{L^{\beta_1}(B_{3R/4}(o))},
		\end{align}
		where
		$$
		a_{10} \geq \left(a_9t_0^316^k e^{ t_0}\right)^{\frac{n}{2\beta_1 }} 16 ^{\frac{n^2}{4\beta_1}}.
		$$
		By (\ref{lpbpund}), we obtain
		\begin{align}
			\label{3.44}
			\|f\|_{L^{\infty}(B_{R/2}(o))}
			\leq\ a_{11}\frac{(1+\sqrt{\kappa}R)^2}{R^2},
		\end{align}
where $a_{11} = a_{10}a_8c_{n,p,q,r}$. Reminding $f=|\nabla u|^2$ and $u=-(p-1)\log v$, we obtain the desired estimate. Thus, we complete the proof of \thmref{thm1}.
\end{proof}

\subsection{Proof of \thmref{thm1.4}}
In this case, we have $\kappa=0$. By \corref{cor1.3}, we have
\begin{align}\label{3.45}
\frac{|\nabla v(x_0)|}{v(x_0)}\leq \sup_{B(x_0,\frac{R}{2})}\frac{|\nabla v|}{v}\leq \frac{C(n,p,q,r)}{R}, \quad \forall x_0\in M.
\end{align}
Letting $R\to\infty$ in \eqref{3.45}, we obtain
$$
\nabla v(x_0)=0,\quad\forall x_0\in M.
$$
thus $v$ is a constant.
	\qed

\subsection{Proof of \thmref{thm1.5} }
For any $x\in B_{R/2}(o)\subset M$,  by \corref{cor1.3}, we have
\begin{align}\label{3.48}
|\nabla \ln v(x)|\leq c(n,p,q)\frac{ 1+\sqrt{\kappa}R }{R }.
\end{align}
Choosing a minimizing geodesic $\gamma(t)$ with arc length parameter connecting $o$ and $x$:
	$$
	\gamma:[0,d]\to M,\quad\gamma(0)=o, \quad \gamma(d)=x.
	$$
where $d=d(x, o)$ is the geodesic distance from $o$ to $x$, we have
	\begin{align}
		\ln v(x)-\ln v(o)=\int_0^d\frac{d}{dt}\ln v\circ\gamma(t)dt.
	\end{align}
Since $|\gamma'|=1$, we have
\begin{align}
\left|\frac{d}{dt}\ln v\circ\gamma(t)\right|\leq |\nabla \ln v||\gamma'(t)| \leq c(n,p,q)\frac{ 1+\sqrt{\kappa}R }{R }.
\end{align}	
Thus it follows from $d\leq R/2$ and the above inequality that
$$
e^{-c(n,p,q)(1+\sqrt{\kappa}R)/2}\leq v(x)/v(o)\leq e^{c(n,p,q)(1+\sqrt{\kappa}R)/2}.
$$
So, for any $x, y\in B_{R/2}(o)$ we have
$$
v(x)/v(y)\leq e^{c(n,p,q)(1+\sqrt{\kappa}R)}.
$$

Now suppose $v$ is a global positive solution of equation \eqref{equa1} on $M$. Letting $R\to\infty$ in \eqref{3.48}, we obtain that
	$$
	|\nabla \ln v(x)|\leq c(n,p,q)\sqrt{\kappa}, \quad \forall x\in M.
	$$
For any $y\in M$, choose a minimizing geodesic $\gamma(t)$ parameterized by arc length which connects $x$ and $y$:
	$$
	\gamma:[0,d]\to M,\quad\gamma(0)=x, \quad \gamma(d)=y.
	$$
where $d=d(x,y)$ is the distance from $x $ to $y$. Then, we have
	\begin{align}\label{3.49}
		\ln v(y)-\ln v(x)=\int_0^d\frac{d}{dt}\ln v\circ\gamma(t)dt.
	\end{align}
Since $|\gamma'(t)|=1$, we have
	\begin{align}\label{3.50}
		\left|\frac{d}{dt}\ln v\circ\gamma(t)\right|\leq |\nabla \ln v||\gamma'(t)| = c(n,p,q)\sqrt{\kappa}.
	\end{align}
It follows from (\ref{3.49}) and (\ref{3.50}) that
	\begin{align}\label{3.51}
		v(y)/v(x)\leq e^{c(n,p,q)\sqrt{\kappa}d(x, y)}.
	\end{align}
Thus we finish the proof by (\ref{3.51}).

\begin{cor}\label{cor34}
Let $(M, g)$ be a complete Riemannian manifold and $\Omega\subset M$ be a domain. Assume that $v\in C^2(\Omega)$ is a positive solution of
	\begin{align}\label{eq3.53}
		\Delta_pv+av^r|\nabla v|^q=0\quad\text{in}\quad\Omega,
	\end{align}
If
	\begin{align}
		a\geq0 \quad\text{ and }\quad q+r <\frac{n+3}{n-1}(p-1),
	\end{align}
	or
	\begin{align}
		a\leq 0 \quad\text{ and }\quad q+r>p-1,
	\end{align}
then for any $x\in\Omega$, we have
	\begin{align}\label{eq3.54}
		|\nabla\ln v(x)|\leq C_{n,p,q}\left(d(x,\partial\Omega)^{-1}+\sqrt{\kappa}\right)
	\end{align}
\end{cor}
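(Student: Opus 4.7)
The plan is to derive this localized logarithmic gradient bound as an immediate consequence of the interior gradient estimate already established in \thmref{thm1} (equivalently \corref{cor1.3}), by applying it on the largest possible ball centered at the chosen point that still lies inside $\Omega$.

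First, fix an arbitrary $x\in\Omega$ and set $R := d(x,\partial\Omega)$; by the definition of the distance to the boundary, the open geodesic ball $B_R(x)$ is contained in $\Omega$, so the restriction of $v$ to $B_R(x)$ is a positive $C^1$ (in fact $C^2$) solution of \eqref{equa1} on $B_R(x)$. Since the hypotheses \eqref{cond3} or \eqref{cond4} imposed here are exactly the hypotheses of \corref{cor1.3}, I can apply that corollary on the ball $B_R(x)\subset M$ with center $x$ to obtain
\begin{align*}
\sup_{B_{R/2}(x)}\frac{|\nabla v|}{v}\leq C(n,p,q,r)\,\frac{1+\sqrt{\kappa}\,R}{R}.
\end{align*}

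Second, because $x$ is the center of $B_{R/2}(x)$, in particular $x\in B_{R/2}(x)$, so evaluating the pointwise bound at $x$ and substituting $R=d(x,\partial\Omega)$ yields
\begin{align*}
|\nabla\ln v(x)|=\frac{|\nabla v(x)|}{v(x)}\leq C(n,p,q,r)\left(\frac{1}{d(x,\partial\Omega)}+\sqrt{\kappa}\right),
\end{align*}
which is precisely \eqref{eq3.54}. In the degenerate case $\Omega=M$ (so $\partial\Omega=\emptyset$ and $d(x,\partial\Omega)=\infty$), I would instead let $R\to\infty$ in the same inequality on $B_R(x)$ to recover the global bound $|\nabla\ln v(x)|\leq C\sqrt{\kappa}$ already contained in \thmref{thm1.5}.

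Conceptually there is no real obstacle here: all the analytic work—the refined Bochner-type identity of \lemref{lem2.3}, the integral inequality of \lemref{lem3.2}, the $L^\beta$ seed estimate of \lemref{lpbound}, and the Nash--Moser iteration—has already been done in the proof of \thmref{thm1}. The only point that needs a little care is the choice of $R$: one must take $R=d(x,\partial\Omega)$ (or any $R'<d(x,\partial\Omega)$ followed by passage to the supremum) to ensure $B_R(x)\subset\Omega$, so that $v$ genuinely qualifies as a solution on the ball to which \corref{cor1.3} is applied. Once that is set, the claim follows in a single line.
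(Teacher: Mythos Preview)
Your proof is correct and follows essentially the same approach as the paper: set $R=d(x,\partial\Omega)$, apply the interior Cheng--Yau type estimate from \thmref{thm1}/\corref{cor1.3} on $B_R(x)\subset\Omega$, and evaluate the resulting bound at the center $x$. The paper's proof is the same one-line argument, and your additional remarks on why $B_R(x)\subset\Omega$ and on the degenerate case $\Omega=M$ only add clarity.
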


\begin{proof}
	Denote $R = d(x,\partial\Omega)$. By \thmref{thm1}, we have
	$$
	|\nabla \ln v(x)|\leq \sup_{y\in B_{\frac{R}{2}}(x)}|\nabla \ln v(y)|\leq c_{n,p,q}\left(d(x,\partial\Omega)^{-1}+\sqrt{\kappa}\right).
	$$
\end{proof}

\begin{cor}
Let $\Omega$ is a domain containing $o$ and let $R>0$ be such that $d(o,\partial\Omega) \geq2R$. Assume $q+r <\frac{n+3}{n-1}(p-1)$. Then, for any positive solution $u\in C^2(\Omega\setminus\{o\})$ of \eqref{equa1.5} in $\Omega\setminus\{o\}$ there holds true
\begin{align}\label{355}
v(x)\leq \sup_{|y|=R} v(y)\cdot e^{c_{n,p,q}\sqrt{\kappa}|R-d(x,o)|}\left(\frac{R}{d(x,o)}\right)^{c_{n,p,q}}, \quad\forall x\in B_{R}\setminus \{o\}.
\end{align}
\end{cor}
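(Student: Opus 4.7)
The plan is to combine the punctured-domain gradient estimate of Corollary~\ref{cor34} with integration of $|\nabla \ln v|$ along a radial minimizing geodesic emanating from $o$. First, I would apply Corollary~\ref{cor34} with domain $\Omega' := \Omega \setminus \{o\}$, whose boundary is $\partial\Omega \cup \{o\}$. Because $d(o,\partial\Omega) \geq 2R$ and $d(x,o) < R$, one has $d(x,\partial\Omega) \geq 2R - d(x,o) > R > d(x,o)$, so the distance from $x$ to $\partial\Omega'$ equals $d(x,o)$. Corollary~\ref{cor34} then delivers
\begin{align*}
|\nabla \ln v(x)| \leq c_{n,p,q}\left(d(x,o)^{-1} + \sqrt{\kappa}\right), \qquad \forall x \in B_R \setminus \{o\}.
\end{align*}

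For fixed $x \in B_R \setminus \{o\}$, I would choose a unit-speed minimizing geodesic $\gamma:[0,R]\to M$ with $\gamma(0)=o$ and $\gamma(d(x,o))=x$, and set $y := \gamma(R)$, so that $|y|=R$; in the Euclidean setting this is simply the ray from $o$ through $x$ extended to length $R$. Along $\gamma$ one has $d(\gamma(t),o)=t$, and the fundamental theorem of calculus combined with the pointwise bound above yields
\begin{align*}
\ln v(x) - \ln v(y)
&\leq \int_{d(x,o)}^{R} |\nabla \ln v(\gamma(t))|\,dt \\
&\leq c_{n,p,q} \ln\!\left(\frac{R}{d(x,o)}\right) + c_{n,p,q}\sqrt{\kappa}\,(R-d(x,o)).
\end{align*}
Exponentiating this inequality and replacing $v(y)$ by $\sup_{|y|=R} v(y)$ on the right-hand side produces the asserted estimate~(\ref{355}), after noting that $R - d(x,o) = |R - d(x,o)|$ since $d(x,o) < R$.

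The one genuinely delicate point is ensuring that the chosen radial curve stays inside $\Omega' \cap B_R$ and that $\gamma|_{[0,R]}$ is the minimizing geodesic to each of its points (so that $d(\gamma(t),o)=t$). In $\mathbb{R}^n$ the straight segment from $o$ through $x$ extended to length $R$ handles this instantly, while on a general Riemannian manifold one needs that $o$ has no cut point on $\gamma|_{[0,R]}$, which is automatic for $R$ below the injectivity radius and is implicit in the radial formulation using $|y|$. Once this curve is fixed, the remainder is mechanical: the $t^{-1}$ term integrates to $\ln(R/d(x,o))$, giving the polynomial factor $(R/d(x,o))^{c_{n,p,q}}$, and the constant $\sqrt{\kappa}$ term integrates to produce the exponential factor $e^{c_{n,p,q}\sqrt{\kappa}(R-d(x,o))}$, matching~(\ref{355}) precisely.
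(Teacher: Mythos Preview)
Your proposal is correct and follows essentially the same route as the paper: apply the pointwise bound of Corollary~\ref{cor34} and integrate $|\nabla\ln v|$ along the radial geodesic from $x$ out to the point $y$ at distance $R$. The only cosmetic difference is that the paper applies Corollary~\ref{cor34} on the ball $B_R(y)$ (whose boundary passes through $o$, so that $d(\gamma(s),\partial B_R(y))=s$ along the ray), whereas you apply it directly on the punctured domain $\Omega\setminus\{o\}$; both choices yield the identical integrand $c_{n,p,q}(s^{-1}+\sqrt{\kappa})$ and hence the same estimate.
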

\begin{proof}
	Let $\gamma(t)$ be the geodesic ray starting from $o$ to $x$ and $\gamma(r)=x$.
	Let $y$ lies in the geodesic ray such that $d(o,y) = R$, i.e. $\gamma(R)=y$, then $B_R(y)\subset\Omega$ and $d(\gamma(tR+(1-t)r), \partial B_R(y))=tR+(1-t)r$ for any $0\leq t\leq 1$. Since $v$ is a solution of \eqref{eq3.53} in $B_{R}(y)$, by \eqref{eq3.54} we have
	\begin{align*}
		|\ln v(x)-\ln v(y)|=&\ \left|\int^1_0\frac{d}{dt}\ln v\circ\gamma(tR+(1-t)r)dt\right|
		\\
		\leq &\ |R-r|\int^1_0|\nabla \ln v\circ\gamma(tR+(1-t)r) |dt\\
		\leq &\  c_{n,p,q}|R-r|\int^1_0\left((tR+(1-t)r)^{-1}+\sqrt{\kappa}\right)dt
		\\
		=&\  c_{n,p,q}(\ln R-\ln r+\sqrt{\kappa}|R-r|),
	\end{align*}
	i.e.
	\begin{align*}
		\ln v(x)
		\leq &\ c_{n,p,q}(\ln R-\ln r+\sqrt{\kappa}|R-r|)+\sup_{|y|=R}\ln v(y).
	\end{align*}
	\eqref{355} follows from the above inequality.
\end{proof}

\subsection{The case $\dim(M)=2$.}
In the proof of \thmref{thm1} above, we have used Saloff-Coste's Sobolev inequality on the embedding $W^{1,2}(B)\hookrightarrow L^{\frac{2n}{n-2}} (B)$ on a manifold. Since the Sobolev exponent $2^*=2n/(n-2)$ requires $n>2$, we did not consider the case $n=2$ in \thmref{thm1}. In this subsection, we will explain briefly that \thmref{thm1} can also be established when $n=2$.
	
When $\dim M=n=2$, we need the special case of  Saloff-Coste's Sobolev theorem, i.e., \lemref{salof}. For any $n'> 2$, there holds
	$$
	\|f\|_{L^{\frac{2n'}{n'-2}}(B)}^2\leq e^{c_0(1+\sqrt{\kappa}R)}V^{-\frac{2}{n'}}R^2\left(\int_B|\nabla f|^2+R^{-2}f^2\right)
	$$
for any $f\in C^{\infty}_0(B)$.
	
For example, we can choose $n'=4$, then we have	
\begin{align*}
\left(\int_{\Omega}f^{2\alpha+p}\eta^{4}\right)^{2}\leq e^{c_0(1+\sqrt{\kappa} R)}V^{-\frac{1}{2}}\left(R^2\int_{\Omega}\left|\nabla \left(f^{\frac{p}{4}+\frac{\alpha}{2}}\eta\right)\right|^2+\int_{\Omega}f^{\frac{p}{2}+\alpha}\eta^2\right).
\end{align*}
By the above inequality and \eqref{equation:3.31}, we can deduce the following integral inequality by almost the same method as in \lemref{lem3.2}.
\begin{align} \label{equation3.19}
\begin{split}
& \beta_{n,p,q,r}\int_\Omega f^{\alpha_0+\frac{p}{2}+t}\eta^2 + \frac{a_3}{ t }e^{-t_0}V^{\frac{1}{2}}R^{-2}\left\|f^{\frac{\alpha_0+t-1}{2}+\frac{p}{4} }\eta\right\|_{L^{4}}^2\\
\leq\  & a_5t_0^2R^{-2} \int_\Omega f^{\alpha_0+\frac{p}{2}+t-1}\eta^2+\frac{a_4}{t }\int_\Omega f^{\alpha_0+\frac{p}{2}+t-1}|\nabla\eta|^2,
\end{split}
\end{align}
where $\alpha_0$ is the same as  $\alpha_0$ defined in Section \ref{sect:3.1},  but the constants $a_i$, $i=3, \, 4,\, 5$, may differ from those defined in Section \ref{sect:3.1}.
	
By repeating the same procedure as in Section \ref{sect3.2}, we can deduce from \eqref{equation3.19} the $L^{\beta}$-bound of $f $ in a geodesic ball with radius $3R/4$
	\begin{align}\label{equation:3.20}
		\|f \|_{L^{\beta}(B_{3R/4}(o))}\leq a_8V^{\frac{1}{\beta}} \frac{t_0^2}{ R^2},
	\end{align}
where $\beta=p+2\alpha_0+2t_0-2$.
	
For the Nash-Moser iteration, we set $$\beta_l = 2^l(\alpha_0+t_0+p/2-1)$$ and $\Omega_l$ by the similar way with that in Section \ref{sec3.3}, and can obtain the following inequality
	\begin{align}\label{equation:3.21}
		\|f\|_{L^{\infty}(B_{\frac{R}{2}}(o))}\leq\ &a_{11} V^{-\frac{1}{\beta}}\|f\|_{L^{\beta}(B_{\frac{3R}{4}}(o))}.
	\end{align}
Combining \eqref{equation:3.20} and \eqref{equation:3.21}, we finally obtain the Cheng-Yau type gradient estimate. Harnack inequality and Liouville type results follow from the Cheng-Yau type gradient estimate, here we omit the details.

\section{Some applications for the equations on $\Omega\subset\mathbb R^n$.}
\subsection{Some known results and notions on the equation \eqref{equa1} on $\R^n$}

Due to the special importance of the semilinear case, we give separate statements (and proofs). It is easy to see that in the case $p=2$ the equation \eqref{equa1} on $\R^n$ is invariant under the action of the transformations $T_\sigma$ defined for $\sigma>0$ by
	$$T_\sigma(v)(x)=\sigma^{\frac{2-q}{q+r-1}}v(\sigma x).$$
If we looks for radial positive solutions under the form $u(x)=\Lambda |x|^{-\gamma}$, we find that,
if $q<2$ and $q + r - 1>0$, then $\gamma=\gamma_{q,r}=\frac{2-q}{q+r-1}$ and
$$\Lambda=\Lambda_{n,q,r}=\gamma_{q,r}^{\frac{1-q}{q+r-1}}\left(n-\frac{q+2r}{q+r-1}\right)^{\frac{1}{q+r-1}}.$$
However, this last quantity exists if and only if the exponents belong to the supercritical range, that is, when
$$q(n-1) + r(n-2) > n.$$
In the subcritical range of exponents, that is, when
$$q(n-1) + r(n-2) < n,$$
we have mentioned in the previous section that Mitidieri and Pohozaev (Theorem 15.1 in \cite{MR1879326}) proved that if $r>0$ and $q\geq0$ satisfy $q+r>1$ and
\begin{align}\label{vercondi1}
r(n-2)+q(n-1)<n,
\end{align}
then all nonnegative functions $v\in C^1(\mathbb{R}^n)$ satisfying
$$-\Delta v\geq |\nabla v|^qv^r\geq 0$$
are constant.

In the subcritical range of exponents, Bidaut-V\'eron, Garcia-Huidobro and V\'eron \cite{MR3959864} proved recently that Serrin’s classical results (see Theorem 1 of \cite{Ser1}, \cite{Ser2}) can be applied and obtain a local Harnack inequality and an a priori estimate for positive solution $v$ in $B_R \setminus \{0\}$ (see Theorem A in \cite{MR3959864}). They also adopted the well-known Bernstein method and the Keller-Osserman comparison method to discuss this equation in the supercritical range of exponents. The a priori gradient estimates, which are one of the main results of their paper (see Theorem B, page 1489 in \cite{MR3959864}), were derived. Concretly, they considered the following equation
\begin{align}\label{equa1.5}
\begin{cases}
\Delta  v + |\nabla v|^qv^r = 0, &\text{in }\, \R^n;\\
v>0, & \text{in }\,\R^n,
\end{cases}
\end{align}
where $0\leq q <2$ and $r\geq 0$. Their results can be summarized by the following:

\begin{theorem*}
(A). Let $\Omega\subset\mathbb{R}^n$ be a domain containing $0$, $n \geq 3$, $r \geq 0$, $0 \leq q < 2$, and assume that
$(n-2)r + (n-1)q < n$ holds. If $v\in C^2(\Omega\setminus\{0\})$ is a positive solution of \eqref{equa1.5} in $\Omega\setminus\{0\}$, then estimate
\begin{equation}\label{Ver}
v(x)+|x||\nabla v(x)| < c|x|^{2-n}
\end{equation}
holds in a neighborhood of $0$ with a constant $c$ depending on $v$.

(B). Let $n \geq 2$, $0\leq q<2$, and $r\geq 0$ be such that $r+q-1>0$. If $v$ is a positive solution of \eqref{equa1.5} in $B_R(0)\subset \mathbb{R}^n$ and either of the assumptions
\begin{enumerate}
\item $1\leq r< \frac{n+3}{n-1}\quad \text{and}\quad q + r - 1 < \frac{4}{n-1}$,
			
\item $0 \leq r<1 \quad \text{and} \quad q + r - 1 < \frac{(r+1)^2}{r(n-1)}$,
\end{enumerate}
is fulfilled, then there exist positive constants $a^*= a^*(n, q, r)$ and $c_1 = c_1(n, q, r)$ such that
		$$|\nabla v^{a^*}(0)|\leq c_1R^{-1-a^*\frac{2-q}{q+r-1}}.$$
\end{theorem*}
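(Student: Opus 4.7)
My plan is to derive both parts of the Theorem$^{*}$ as consequences of the three main tools already established in the paper: the universal logarithmic gradient estimate of \thmref{thm1} with $p=2$, the Harnack inequality of \thmref{thm1.5}, and the Liouville theorems \thmref{thm1.4} and \thmref{t4}. A direct computation shows that the subcritical condition $(n-2)r+(n-1)q<n$ in part~(A) is exactly the hypothesis of \thmref{t4} in the case $p=2$, $a=1$, while the first case of part~(B) is equivalent to $q+r<(n+3)/(n-1)$, so Corollary~\ref{cor1.3} applies; the second case of part~(B) lies only partly in that range and will require an auxiliary Bernstein-type argument on $v^{a^{*}}$.

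For part~(A), I would work in $\Omega\setminus\{0\}$. Applying \thmref{thm1} on the ball $B_{|x|/2}(x)$ yields $|\nabla\log v(x)|\le C/|x|$, so once the upper bound $v(x)\le c|x|^{2-n}$ is established, one obtains $|\nabla v(x)|\le C'|x|^{1-n}$ and the estimate of (A) follows. To prove the upper bound, I would use that $v$ is superharmonic on $\Omega\setminus\{0\}$ and invoke the classical B\^ocher representation $v=cE+h$, where $E$ is the fundamental solution of $-\Delta$ and $h$ is superharmonic across $0$. To control $c$, I would integrate the PDE over $B_r\setminus B_\varepsilon$ and use the gradient and Harnack estimates together with the subcritical assumption to show that $\int_{B_r\setminus\{0\}}v^{r}|\nabla v|^{q}<\infty$. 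Alternatively, a blow-up/Liouville reduction works: if $v(x_k)|x_k|^{n-2}\to\infty$ along a sequence $x_k\to 0$, rescale at $x_k$ and extract a nontrivial nonnegative entire solution of (\ref{equa1.5}) (or, after the lower-order coefficient collapses to zero in the limit, of $\Delta w=0$), contradicting \thmref{t4} in the first scenario and the positive Liouville theorem for harmonic functions in the second.

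For part~(B), I would exploit the scaling invariance $T_\sigma v(x)=\sigma^{\gamma}v(\sigma x)$ with $\gamma=(2-q)/(q+r-1)$; setting $\tilde v(y)=R^{\gamma}v(Ry)$ reduces the claim to the universal bound $|\nabla \tilde v^{a^{*}}(0)|\le c_1$ for any positive solution $\tilde v$ of (\ref{equa1.5}) on $B_1(0)$. Under the first case of part~(B), Corollary~\ref{cor1.3} gives $|\nabla \tilde v(0)|\le c\tilde v(0)$, hence $|\nabla \tilde v^{a^{*}}(0)|\le ca^{*}\tilde v^{a^{*}}(0)$, and it only remains to find $a^{*}=a^{*}(n,q,r)$ and $K=K(n,q,r)$ with $\tilde v^{a^{*}}(0)\le K$. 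I would obtain this by a Pol\'a\v{c}ik--Quittner--Souplet doubling/blow-up argument contradicting the Liouville theorem \thmref{thm1.4}, and then unscale to recover the stated inequality on $B_R$.

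The main obstacle is the second case of part~(B), which allows $q+r\ge(n+3)/(n-1)$ and is therefore outside the direct reach of \thmref{thm1}. To cover this regime, one has to rerun the \lemref{lem41}--\lemref{lpbound} Nash-Moser scheme with the auxiliary unknown $w=v^{a^{*}}$ in place of $v$; the exponent $a^{*}$ is then chosen precisely so that the Bochner-type identity corresponding to \eqref{bochner1} retains a positive coefficient on the leading nonlinear gradient terms, which translates into the algebraic constraint $q+r-1<(r+1)^{2}/(r(n-1))$ of the second case. Carrying out this reworked Bernstein computation is, in my view, the delicate part of the proof.
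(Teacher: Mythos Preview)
The statement you are trying to prove is not a theorem of this paper at all: it is a quotation of Theorems~A and~B of Bidaut-V\'eron, Garcia-Huidobro and V\'eron \cite{MR3959864}, recorded in Section~5.1 as background (``Their results can be summarized by the following''). The paper gives no proof of it and is in fact motivated by the difficulty of computing the exponent $a^{*}$ appearing in part~(B). Consequently there is no ``paper's own proof'' to compare your proposal against.

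What the paper \emph{does} prove is the closely related \thmref{t7} (=~\thmref{t51}), and the argument there matches your sketch for part~(A): one observes that the spherical average $\bar v(r)$ is superharmonic (since $a\geq0$), obtains $\bar v(r)\leq m r^{2-n}$, and then combines this with the Harnack inequality of \thmref{thm1.5} to upgrade to the pointwise bound $v(x)\leq C|x|^{2-n}$; the gradient bound $|x||\nabla v(x)|\leq C|x|^{2-n}$ then follows from the logarithmic estimate $|\nabla\log v(x)|\leq C/|x|$. Your Bôcher/blow-up alternatives are unnecessary overhead, but not wrong. Note, incidentally, that the subcritical hypothesis $(n-2)r+(n-1)q<n$ with $q,r\geq0$ and $n\geq3$ does imply $q+r<(n+3)/(n-1)$, so invoking Corollary~\ref{cor1.3} is legitimate here.

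For part~(B) you have correctly identified the obstruction: the second alternative $0\leq r<1$, $q+r-1<(r+1)^{2}/(r(n-1))$ allows $(q,r)$ outside the region $q+r<(n+3)/(n-1)$, so none of the present paper's estimates apply. Your suggestion---rerun a Bernstein computation on $w=v^{a^{*}}$ with $a^{*}$ chosen to make the Bochner-type coefficient positive---is exactly what \cite{MR3959864} does, and is not reproducible from the tools developed here. So your plan for~(B) amounts to reproving the cited result by its original method, which is fine, but it is not a consequence of this paper.
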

	
Note that $\frac{4}{n-1}\leq \frac{(r+1)^2}{r(n-1)}$ always holds. The authors of \cite{MR3959864} also pointed out that the value of the exponent $a^*$ is not easy to compute.
	
As a consequence of the above theorem, they also derived some Liouville properties for \eqref{equa1} on $\R^n$ or \eqref{equa1.5}, which can be stated as follows: under the assumptions on $n$, $q$ and $r$ of the above theorem, any positive solution of \eqref{equa1.5} in $\R^n$ is constant.

{\it It is well-known that from Cheng-Yau logarithmic gradient estimate, one can easily obtain the Harnack inequality which enables us to derive the local estimates of solutions. A natural problem remaining in \cite{MR3959864} is for us how to obtain the concise and universal Cheng-Yau logarithmic gradient estimates for the positive solutions to \eqref{equa1} on $\R^n$ or \eqref{equa1.5} with $p>1$ since it is hard to compute $a^*$ appeared in the above estimates obtained in \cite{MR3959864} and the constant in \eqref{Ver} depends on $u$ even in the semilinear case $p=2$.

On the other hand, one wants to know whether or not the conditions supposed in the above theorem can be weaken? For instance, whether or not the condition (1) in the above theorem
		$$0\leq q <2, \quad r\geq 0, \quad q+r>1,\quad 1\leq r< \frac{n+3}{n-1}\quad \text{and}\quad q + r < \frac{n+3}{n-1}$$
		can be weaken to
		\begin{align}\label{vercondi2}
			q+r<\frac{n+3}{n-1}
		\end{align}
where $q$ and $r$ are real constants.}

\subsection{Some new results on $\Omega\subset\mathbb{R}^n$: Proofs of \thmref{t7} and Corollaries \ref{tui1}-\ref{tui3}}\

As an application of our main results in the previous section, actually we have answered partially the above problems. See \thmref{thm1} and Corollary \ref{cor1.3}. Moreover, for the equation \eqref{equa1.5} we obtain the following:

\begin{thm}\label{t51}(Theorem \ref{t7})
(i). Let $\Omega\subset\mathbb{R}^n$ be a domain containing $0$ and $p=2$. Assume that $a$, $q$ and $r$ fulfill \eqref{cond3}, i.e.,
$$a\geq 0 \quad\text{ and }\quad q+r <\frac{n+3}{n-1}.$$
If $v\in C^2(\Omega\setminus\{0\})$ is a positive solution of \eqref{equa1} in $\Omega\setminus\{0\}$, then, in the case $n\geq3$ there holds true in a neighborhood of $0$
\begin{equation*}\label{Ver}
v(x)+|x||\nabla v(x)| \leq c|x|^{2-n},
\end{equation*}
where $c$ is a constant depending on $u$; and in the case $n=2$ there holds in a neighborhood of $0$
\begin{equation*}\label{Ver}
v(x)+|x||\nabla v(x)| \leq c\log\frac{1}{|x|},
\end{equation*}
where $c$ is a constant depending on $v$.

(ii). Let $\Omega\subset\mathbb{R}^n$ be a domain containing $0$ and $p=2$. Assume that $a$, $q$ and $r$ fulfill \eqref{cond4}, i.e.,
$$a\leq 0 \quad\text{ and }\quad q+r>p-1.$$
If $v\in C^2(\Omega\setminus\{0\})$ is a positive solution of \eqref{equa1} in $\Omega\setminus\{0\}$, then, in the case $n\geq3$ there exists  a constant $c$ depending on $v$ such that there holds in a neighborhood of $0$
\begin{equation*}
v(x) \geq c|x|^{2-n};
\end{equation*}
and in the case $n=2$ there exists  a constant $c$ depending on $v$ such that there holds in a neighborhood of $0$
\begin{equation*}\label{Ver}
v(x) \geq c\log\frac{1}{|x|}.
\end{equation*}
\end{thm}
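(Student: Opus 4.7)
The plan is to deduce Theorem~\ref{t7} from the logarithmic gradient estimate of Corollary~\ref{tui1} (specialized to $p=2$ and $\kappa=0$), the Harnack inequality of Theorem~\ref{thm1.5}, and the elementary monotonicity formula for spherical means of super- or subharmonic functions. The two parts share a common structure: on a punctured neighborhood $B_R(0)\setminus\{0\}\subset\Omega$, I work with the spherical mean $\bar v(r):=|\partial B_r(0)|^{-1}\int_{\partial B_r(0)}v\,dS$ and the associated flux $\phi(r):=r^{n-1}\bar v'(r)$. By the divergence theorem on an annulus, $\omega_{n-1}\bigl(\phi(r_2)-\phi(r_1)\bigr)=\int_{B_{r_2}\setminus B_{r_1}}\Delta v$, so the sign of $\Delta v$ directly determines the monotonicity of $\phi$. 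Once $\bar v$ is controlled by the desired rate, the Harnack inequality of Theorem~\ref{thm1.5} (with $\kappa=0$) transfers this to a pointwise estimate on $v$. For part~(i) the gradient part of the conclusion then follows for free from $|x||\nabla v(x)|\leq Cv(x)$, which is Corollary~\ref{tui1} combined with $d(x,\partial(\Omega\setminus\{0\}))=|x|$ near the origin.

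For part~(i), with $p=2$ and $a\geq 0$ the equation rewrites as $-\Delta v=a|\nabla v|^q v^r\geq 0$, so $v$ is superharmonic in $\Omega\setminus\{0\}$ and $\phi$ is non-increasing in $r$. In particular $\phi(s)\geq \phi(R)$ for all $s\leq R$, so $\bar v'(s)\geq \phi(R)s^{1-n}$. Integrating from $r$ to $R$ yields $\bar v(r)\leq \bar v(R)+\max\{-\phi(R),0\}(n-2)^{-1}(r^{2-n}-R^{2-n})\leq Cr^{2-n}$ for small $r$, with $C$ depending on $v$ through $\bar v(R)$ and $\phi(R)$. When $n=2$ the same integration with $\int s^{-1}\,ds$ produces $\bar v(r)\leq C\log(1/r)$. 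Harnack then gives the pointwise bound $v(x)\leq C'|x|^{2-n}$ (resp.\ $C'\log(1/|x|)$), and the gradient bound follows as explained above.

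For part~(ii), with $a\leq 0$ the equation gives $\Delta v=-a|\nabla v|^q v^r\geq 0$, so $v$ is subharmonic and $\phi$ is non-decreasing. If $v$ remains bounded near $0$ the stated lower bound is trivial with $c=0$, so we may assume $v$ is unbounded at the origin. I claim $\phi(r_0)<0$ at some $r_0>0$: otherwise $\bar v$ would be non-decreasing and therefore bounded by $\bar v(R)<\infty$, and since Harnack makes $\sup_{|x|=r}v$ comparable to $\bar v(r)$, $v$ itself would be bounded, a contradiction. The non-decreasing monotonicity of $\phi$ then forces $\phi(s)\leq \phi(r_0)=:-\alpha<0$ for every $s\leq r_0$, whence $\bar v'(s)\leq -\alpha s^{1-n}$; integrating from $r$ to $r_0$ gives $\bar v(r)\geq \alpha(n-2)^{-1}\bigl(r^{2-n}-r_0^{2-n}\bigr)+\bar v(r_0)\geq c\,r^{2-n}$ for small $r$, with the analogous $c\log(1/r)$ bound when $n=2$. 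Harnack again transfers this to $v(x)\geq c'|x|^{2-n}$.

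The main obstacle is to justify cleanly the ``no-extension'' step in part~(ii), namely that a positive subharmonic function on $B_R(0)\setminus\{0\}$ whose spherical mean stays bounded is pointwise bounded near $0$. This requires combining the Harnack inequality of Theorem~\ref{thm1.5} on each sphere to bound $\sup_{|x|=r}v$ by a multiple of $\bar v(r)$, with standard removable-singularity results for positive subharmonic functions on $B_R(0)$ to close the contradiction. Apart from this, and the elementary verification that the flux identity for $\phi$ survives the isolated singularity of $v$ (which is automatic once one works on annuli $\{r_1<|x|<r_2\}$ with $r_1>0$), all remaining steps are routine.
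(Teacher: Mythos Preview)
Your proposal is correct and follows essentially the same approach as the paper: use the sign of $a$ to conclude that $v$ is superharmonic (part~(i)) or subharmonic (part~(ii)), control the spherical mean $\bar v(r)$ by the corresponding monotonicity formula, and then transfer to pointwise bounds via the Harnack inequality of Theorem~\ref{thm1.5}. Your treatment is in fact more detailed than the paper's---you spell out the flux argument for $\phi(r)=r^{n-1}\bar v'(r)$ and, importantly, you explicitly derive the gradient part $|x||\nabla v(x)|\le c|x|^{2-n}$ in (i) from Corollary~\ref{tui1}, a step the paper's proof omits.
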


\begin{proof}
(i). Since $a\geq 0$ and $p=2$, we can see easily that the spherical average $\bar{v}$ of $v$ on $\{x: |x|=r\}$ is superharmonic, if $v$ is a positive solution to $$\Delta v + a|\nabla v|^q v^r=0.$$
Hence there exists some $m \geq 0$ such that in a neighborhood of $0$
$$\bar{v}(r)\leq m r^{2-n}\quad\mbox{for}\,\, n\geq 3\quad\mbox{and}\quad \bar{v}(r)\leq m \log\frac{1}{|x|}\quad\mbox{for}\,\, n=2.$$
On the other hand, $v$ verifies the Harnack inequality obtained in \thmref{thm1.5}
$$\sup_{|x|=r} v(x)\leq K\inf_{|x|=r}v(x)\quad\mbox{for}\,\, r\in (0,\, R],$$
where $K>0$ is some positive constant. Combined with the Harnack inequality, this yields that in a neighborhood of $0$
$$v(x)\leq C |x|^{2-n}\quad\mbox{for}\,\, n\geq 3\quad\mbox{and}\quad v(x)\leq C \log\frac{1}{|x|} \quad\mbox{for}\,\, n=2.$$

(ii). As $a\leq 0$ and $p=2$, we can see easily that the spherical average $\bar{v}$ of $v$ on $\{x: |x|=r\}$ is subharmonic, if $v$ is a positive solution to
$$\Delta v + a|\nabla v|^q v^r=0.$$
Hence there exists some $m \geq 0$ such that in a neighborhood of $0$
$$\bar{v}(r)\geq m r^{2-n}\quad\mbox{for}\,\, n\geq 3\quad\mbox{and}\quad \bar{v}(r)\geq m \log\frac{1}{|x|}\quad\mbox{for}\,\, n=2.$$

On the other hand, from \thmref{thm1.5} we know that $v$ verifies the following Harnack inequality
$$\sup_{|x|=r} v(x)\leq K\inf_{|x|=r}v(x)\quad\mbox{for}\,\, r\in (0,\, R],$$
where $K>0$ is some positive constant. Combined with the Harnack inequality, this yields that in a neighborhood of $0$
$$v(x)\geq C |x|^{2-n}\quad\mbox{for}\,\, n\geq 3\quad\mbox{and}\quad v(x)\geq C \log\frac{1}{|x|} \quad\mbox{for}\,\, n=2.$$
Thus, we finish the proof.
\end{proof}

\begin{cor}\label{cor35}(Corollary \ref{tui1})
Let $\Omega\subset \mathbb R^n$ be a domain. If $v\in C^2(\Omega)$ is a positive solution of
\begin{align}\label{eq3.58}
\begin{cases}
&\Delta_pv+av^r|\nabla v|^q=0\quad\text{in}\quad\Omega;\\
&a\geq0 \quad\text{ and }\quad q+r <\frac{n+3}{n-1}(p-1),\\
&\text{or} \quad a\leq 0 \quad\text{ and }\quad q+r>p-1 ,
\end{cases}
\end{align}
then for any $x\in\Omega$, we have
\begin{align}\label{eq3.57}
|\nabla\ln v(x)|\leq C_{n,p,q}d(x, \partial\Omega)^{-1}.
\end{align}
\end{cor}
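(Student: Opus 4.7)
The plan is to reduce this to the local Cheng--Yau type estimate already established in \thmref{thm1} (equivalently \corref{cor1.3}), specialized to the Euclidean setting where $\ric_g \equiv 0$, so one may take $\kappa = 0$. The assumption on $(a,p,q,r)$ in \eqref{eq3.58} is exactly \eqref{cond3} or \eqref{cond4}, which places us in the regime where \corref{cor1.3} applies.

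Fix an arbitrary point $x \in \Omega$ and set $R := d(x, \partial\Omega)$. Then $B_R(x) \subset \Omega$, so $v$ is a $C^2$ positive solution of the equation on the geodesic ball $B_R(x)$. Applying \corref{cor1.3} on this ball (with $\kappa = 0$ and centre $x$ playing the role of $o$), we obtain
\begin{equation*}
\sup_{B_{R/2}(x)} \frac{|\nabla v|}{v} \leq C(n,p,q,r)\,\frac{1 + 0}{R} = \frac{C(n,p,q,r)}{R}.
\end{equation*}
Since $x \in B_{R/2}(x)$, this supremum controls $|\nabla v(x)|/v(x)$, giving
\begin{equation*}
|\nabla \ln v(x)| = \frac{|\nabla v(x)|}{v(x)} \leq C(n,p,q,r)\,d(x, \partial\Omega)^{-1},
\end{equation*}
which is \eqref{eq3.57}. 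As $x$ was arbitrary, this completes the proof.

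There is no real obstacle here: the estimate \eqref{eq3.57} is essentially a restatement of the $\kappa = 0$ case of \corref{cor34}. Indeed, \corref{cor34} already yields $|\nabla \ln v(x)| \leq C_{n,p,q}\bigl(d(x,\partial\Omega)^{-1} + \sqrt{\kappa}\bigr)$ on any Riemannian manifold, and \corref{cor35} is simply the specialization to $\mathbb{R}^n$ where the $\sqrt{\kappa}$ term vanishes. The only thing to emphasize in writing up the proof is that the hypothesis on $(a,q,r)$ in \eqref{eq3.58} is precisely what is needed to invoke \corref{cor1.3}, and that the local ball $B_R(x)$ stays inside $\Omega$ by the choice $R = d(x,\partial\Omega)$.
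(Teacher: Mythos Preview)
Your proof is correct and follows essentially the same route as the paper: set $R=d(x,\partial\Omega)$, note $B_R(x)\subset\Omega$, and apply the local Cheng--Yau estimate (the paper cites \thmref{thm1}, you cite \corref{cor1.3}, but these agree under the hypotheses of \eqref{eq3.58}) with $\kappa=0$. Your additional remark connecting this to the $\kappa=0$ case of \corref{cor34} is accurate and not present in the paper's terse proof.
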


\begin{proof}
Denote $R = d(x,\partial\Omega)$. By \thmref{thm1}, we have
	$$
	|\nabla \ln v(x)|\leq \sup_{y\in B_{\frac{R}{2}}(0)}|\nabla \ln v(y)|\leq c_{n,p,q}\frac{1}{R}=c_{n,p,q}d(x,\partial\Omega)^{-1}.
	$$
\end{proof}

By \corref{cor35}, we can estimate the order of singularity of the solution in punctured domain.
\begin{cor}(Corollary \ref{tui2})
Let $\Omega$ be a domain containing $0$ and let $R>0$ be such that $d(0,\partial\Omega) \geq2R$. Then any $v\in C^2(\Omega\setminus\{0\})$ solution of equation \eqref{eq3.58} with the conditions in \corref{cor35} satisfies
\begin{align}\label{eq3.55}
v(x)\leq \sup_{|y|=R} v(y)\cdot \left(\frac{R}{|x|}\right)^{c_{n,p,q}},\quad \forall x\in B_{R}\setminus \{0\}.
\end{align}
\end{cor}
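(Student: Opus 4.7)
The plan is to integrate the logarithmic gradient bound from \corref{cor35} along a radial segment, in direct analogy with the earlier derivation of \eqref{355}. Fix $x\in B_R\setminus\{0\}$, set $r=|x|\in(0,R]$ and $y=(R/r)x$ so that $|y|=R$, and parametrize the segment joining them by the unit-speed curve $\gamma(t)=(t/r)x$ for $t\in[r,R]$, which satisfies $|\gamma(t)|=t$ throughout.

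The pivotal step, and the only place requiring real care, is to verify that at each point $\gamma(t)$ the distance to the boundary of the domain $\Omega\setminus\{0\}$ on which $v$ is defined equals $t$. Since $\partial(\Omega\setminus\{0\})=\partial\Omega\cup\{0\}$, the hypothesis $d(0,\partial\Omega)\geq 2R$ combined with the triangle inequality yields $d(\gamma(t),\partial\Omega)\geq 2R-t\geq R\geq t$, so the origin is the nearest boundary point and $d(\gamma(t),\partial(\Omega\setminus\{0\}))=t$. Applying \corref{cor35} on the domain $\Omega\setminus\{0\}$ then gives
\begin{align*}
|\nabla \ln v(\gamma(t))|\leq \frac{C_{n,p,q}}{t},\qquad t\in(r,R).
\end{align*}

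With this estimate in hand the conclusion follows immediately: the chain rule together with $|\gamma'|=1$ gives
\begin{align*}
\ln v(x)-\ln v(y)=-\int_r^R \frac{d}{dt}\ln v(\gamma(t))\,dt \leq \int_r^R \frac{C_{n,p,q}}{t}\,dt= C_{n,p,q}\ln\frac{R}{|x|},
\end{align*}
after which taking the supremum of $v(y)$ over the sphere $|y|=R$ and exponentiating delivers the asserted inequality (with $c_{n,p,q}=C_{n,p,q}$). The only real obstacle is the boundary-distance verification above; once it is in place, no further machinery beyond \corref{cor35} is required, and the same template will carry over to \corref{tui3} after replacing the radial segment by a segment normal to $\partial\Omega$.
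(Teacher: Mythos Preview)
Your proof is correct and follows essentially the same route as the paper: both integrate the logarithmic gradient bound from \corref{cor35} along the radial segment from $x$ to $y=(R/|x|)x$. The only cosmetic difference is that the paper applies \corref{cor35} on the ball $B_R(y)\subset\Omega\setminus\{0\}$ and uses $d(\cdot,\partial B_R(y))=t|x|+(1-t)R$, whereas you apply it directly on $\Omega\setminus\{0\}$ and verify $d(\gamma(t),\partial(\Omega\setminus\{0\}))=t$; since $|\gamma(t)|=t$ coincides with the paper's $t|x|+(1-t)R$ under the change of parameter, the two bounds and the resulting integrals are identical.
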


\begin{proof}
Let $y = \frac{R}{|x|}x$, then $B_R(y)\subset\Omega$ and for any $0\leq t\leq 1$,
$$d(tx+(1-t)y, \partial B_R(y))=t|x|+(1-t)R.$$
Since $v$ is a solution of \eqref{eq3.58} in $B_{R}(y)$, by \eqref{eq3.57} we have
	\begin{align*}
		|\ln v(x)-\ln v(y)|=&\ \left|\int^1_0\frac{d}{dt}\ln v(tx+(1-t)y)dt\right|
		\\
		\leq &\ |x-y|\int^1_0|\nabla \ln v(tx+(1-t)y) |dt\\
		\leq &\  c_{n,p,q}|x-y|\int^1_0(t|x|+(1-t)R)^{-1}dt
		\\
		=&\  c_{n,p,q}(\ln R-\ln|x|),
	\end{align*}
	i.e.
	\begin{align*}
		\ln v(x)
		\leq &\  c_{n,p,q}(\ln R-\ln|x|)+\sup_{|y|=R}\ln v(y).
	\end{align*}
Immediately, \eqref{eq3.55} follows from the above inequality.
\end{proof}

\begin{cor}(Corollary \ref{tui3})
	Assume $\Omega$ is a bounded domain with a $C^2$ boundary. Then there exists $\delta^*>0$ such that if we denote $\Omega_{\delta^*}:=\{z\in\Omega:d(z, \partial\Omega)\leq\delta^*\}$, any solution $v\in C^2(\Omega)$ \eqref{eq3.58} in $\Omega$ satisfies
	\begin{align*}
		v(x)\leq \sup_{d(z^*,\partial\Omega)=\delta^*}  v(z^*)\left(\frac{\delta^*}{d(x,\partial\Omega)} \right)^{c_{n,p,q}}.
	\end{align*}
\end{cor}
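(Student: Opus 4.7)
The plan is to mimic the argument of Corollary \ref{tui2} (the preceding corollary on isolated singularities) but to replace the radial structure emanating from $0$ by the normal structure along $\partial\Omega$ that is available for $C^2$ boundaries. The core input is the interior gradient bound \eqref{eq3.57} from Corollary \ref{cor35}, which controls $|\nabla\ln v(y)|$ by $c_{n,p,q}\,d(y,\partial\Omega)^{-1}$ at every interior point. The task is therefore to connect $x\in\Omega_{\delta^*}$ to a reference point $z^*$ with $d(z^*,\partial\Omega)=\delta^*$ along a path on which the distance-to-boundary function behaves like the parameter $t$, and then integrate.

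First I would fix $\delta^*$ by invoking the tubular neighborhood theorem for $C^2$ hypersurfaces: since $\partial\Omega\in C^2$, there is a reach $\delta_0>0$ such that for every $x\in\Omega$ with $d(x,\partial\Omega)<\delta_0$ the nearest boundary point $\pi(x)$ is unique, $x=\pi(x)+d(x,\partial\Omega)\,\nu(\pi(x))$ with $\nu$ the inward unit normal, and the map $(\pi,d(\cdot,\partial\Omega))$ is a $C^1$ diffeomorphism onto $\partial\Omega\times[0,\delta_0)$. In particular, for any $z\in\partial\Omega$ the normal segment $\gamma_z(t):=z+t\nu(z)$, $t\in[0,\delta_0]$, lies in $\Omega$ and satisfies $d(\gamma_z(t),\partial\Omega)=t$. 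Pick $\delta^*\in(0,\delta_0)$; this $\delta^*$ depends only on $\Omega$.

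Second, for $x\in\Omega_{\delta^*}$ set $r:=d(x,\partial\Omega)\leq\delta^*$, $z:=\pi(x)$, and $z^*:=\gamma_z(\delta^*)$. Then $d(z^*,\partial\Omega)=\delta^*$ and the segment $\gamma_z:[r,\delta^*]\to\Omega$ joins $x$ to $z^*$ with $|\gamma_z'(t)|=1$. Applying \eqref{eq3.57} at each $\gamma_z(t)$ gives $|\nabla\ln v(\gamma_z(t))|\leq c_{n,p,q}\,t^{-1}$, so
\begin{align*}
|\ln v(x)-\ln v(z^*)|
\leq\int_r^{\delta^*}|\nabla\ln v(\gamma_z(t))|\,dt
\leq c_{n,p,q}\int_r^{\delta^*}\frac{dt}{t}
=c_{n,p,q}\ln\frac{\delta^*}{r}.
\end{align*}
Exponentiating, $v(x)\leq v(z^*)(\delta^*/d(x,\partial\Omega))^{c_{n,p,q}}$, and taking the supremum over all admissible $z^*$ (those with $d(z^*,\partial\Omega)=\delta^*$) yields the stated inequality.

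I expect the only nontrivial step to be the choice of $\delta^*$: one needs the $C^2$ regularity of $\partial\Omega$ precisely to guarantee a uniform positive reach, hence the existence of the normal foliation on which $t\mapsto d(\gamma_z(t),\partial\Omega)=t$ holds. Once that geometric fact is in hand, the argument reduces to a one-dimensional integration of the scale-invariant estimate \eqref{eq3.57}, exactly parallel to the proof of the preceding corollary; in particular, the constant $c_{n,p,q}$ is the same one produced by Corollary \ref{cor35} and does not deteriorate near $\partial\Omega$.
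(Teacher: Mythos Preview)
Your proposal is correct and follows essentially the same route as the paper: both arguments choose $\delta^*$ from the uniform interior/exterior ball (equivalently, tubular neighborhood) structure of the $C^2$ boundary, project $x$ to its nearest boundary point, connect $x$ to $z^*$ along the inward normal segment on which $d(\cdot,\partial\Omega)$ equals the parameter, and then integrate the bound $|\nabla\ln v|\leq c_{n,p,q}\,d(\cdot,\partial\Omega)^{-1}$ from Corollary~\ref{cor35} to obtain $c_{n,p,q}\ln(\delta^*/d(x,\partial\Omega))$. The only cosmetic difference is the parametrization of the segment (you use arclength $t\in[r,\delta^*]$, the paper uses the convex combination $tx+(1-t)z^*$), which leads to the same integral.
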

\begin{proof}
We denote by $\delta^*$ the maximal $r>0$ such that any boundary point $a\in\partial\Omega$ belongs to the boundary of a ball $B_{r}(a_i)\subset\overline{\Omega}$ with radius $r$ and to simultaneously the boundary of another ball $B_r(a_s)\subset\overline{\Omega}^c$ with radius $r$. In other words,
$$\partial\Omega\cap\partial B_{r}(a_i)\cap \partial B_{r}(a_s)=\{a\}.$$
If $x\in\Omega_{\delta^*}$, we denote by $\sigma(x)$ its projection onto $\partial\Omega$ and $\mathbf{n}_{\sigma(x)}$ the outward normal unit vector to $\partial\Omega$ at $\sigma(x)$ and $z^* = \sigma(x)-\delta_*\mathbf{n}_{\sigma(x)}$, then we have
\begin{align*}
|\ln v(x)-\ln v(z^*)|\leq  &\left|\int^1_0\frac{d}{dt}\ln v(tx+(1-t)z^*)dt\right| \\
= &\left|\int^1_0\langle\nabla\ln v(tx+(1-t)z^*), x-z^*\rangle dt\right|\\
\leq &c_{n,p,q}|d(x)-\delta^*|\int_0^1(td(x,\partial\Omega)+(1-t)\delta^*)^{-1}dt\\
=& c_{n,p,q}(\ln \delta^*-\ln d(x,\partial\Omega))
\end{align*}
If follows that
\begin{align*}
	\ln v(x)
	\leq &\  c_{n,p,q}(\ln \delta^*-\ln d(x,\partial\Omega)+\sup_{d(z^*,\partial\Omega)=R}\ln v(z^*)
\end{align*}
and
\begin{align*}
	v(x)\leq \sup_{d(z^*,\partial\Omega)=\delta^*}  v(z^*)\left(\frac{\delta^*}{d(x,\partial\Omega)} \right)^{c_{n,p,q}}.
\end{align*}
Thus, we finish the proof.
\end{proof}

\section{Global gradient estimate}
By the local gradient estimate of positive solution $v$ to the equation \eqref{equa1}, we have
\begin{align*}
	\sup_{B_{R/2}(o)}\frac{|\nabla v|}{v}\leq C(n,p,q,r)\frac{1+\sqrt{\kappa}R}{R}.
\end{align*}
If $v$ is an entire solution, letting $R\to\infty$ in the above local gradient estimate, obviously we obtain
\begin{align*}
	\frac{|\nabla v|}{v}(x)\leq C(n,p,q,r)\sqrt{\kappa}, \quad\forall x\in M.
\end{align*}
Sung and Wang \cite{MR3275651} have ever studied the sharp bound of $C(n,p,q,r)$ for the case $p$-harmonic function.  Motivated by \cite{MR3275651} we would like to give an explicit expression of the above $C(n,p,q,r)$ in this section.
\begin{thm}[\thmref{t10}]
Let $(M,g)$ be a complete non-compact Riemannian manifold with $\mathrm{Ric}_g\geq-(n-1)\kappa$. Assume that $v$ is an entire positive solution of \eqref{equa1} in $M$. Then,
\begin{enumerate}
\item if
		\begin{align}\label{61}
			a\left(\frac{n+1}{n-1}-\frac{q+r}{p-1}\right)\geq 0,
		\end{align}
there holds
		\begin{align*}
			\frac{|\nabla v|}{v}(x)\leq \frac{(n-1)\sqrt{\kappa}}{p-1}, \quad\forall x\in M;
		\end{align*}
\item if
\begin{align}\label{62}
1<\frac{q+r}{p-1}<\frac{n+3}{n-1},\quad  \forall a\in \R,
\end{align}
there holds
\begin{align*}
\frac{|\nabla v|}{v}(x)\leq \frac{2\sqrt{\kappa}}{(p-1)\sqrt{\left(\frac{q+r}{p-1}-1\right)\left(\frac{n+3}{n-1}-\frac{q+r}{p-1}\right)}}, \quad\forall x\in M.
\end{align*}
\end{enumerate}
\end{thm}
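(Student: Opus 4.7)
The plan is to promote the pointwise Bochner-type inequality underlying Lemma~\ref{lem41} (which was previously fed into a Nash--Moser iteration) into an exact pointwise inequality at the global supremum of $f=|\nabla u|^2$, where $u=-(p-1)\log v$, and then read off the sharp constant via a maximum principle. From Theorem~\ref{thm1} applied on $B_R(o)$ with $R\to\infty$, $f$ is bounded on $M$; set $F:=\sup_M f$. If $F=0$ then $v$ is constant and there is nothing to prove, so assume $F>0$.

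The key observation is that the Cauchy/Young absorptions in the proof of Lemma~\ref{lem41} introduced slack that is not needed at an actual (or approximate) critical point of $f$. At any $x_0$ with $f(x_0)>0$ and $\nabla f(x_0)=0$, the adapted frame $e_1=\nabla u/|\nabla u|$ forces $u_{11}(x_0)=\tfrac12 f^{-1}\langle\nabla u,\nabla f\rangle(x_0)=0$, so the $fu_{11}$ term in \eqref{equa:3.11} drops with no absorption. Under hypothesis~(1), the sign condition \eqref{61} makes the cross term $b(c+\tfrac{2}{n-1})e^{cu}f^{(q-p)/2+2}$ non-negative and, for $\alpha$ large, so is $B_{n,p,q,\alpha}b^2e^{2cu}f^{q-p+2}$; discarding these yields
\begin{equation*}
\frac{f^{2-\alpha-p/2}}{2\alpha}\mathcal{L}(f^\alpha)(x_0)\;\ge\;\frac{f(x_0)^2}{n-1}-(n-1)\kappa\,f(x_0).
\end{equation*}
Under hypothesis~(2), the coefficient $\beta_{n,p,q,r,\alpha}$ in \eqref{equa:3.15a} is monotone increasing in $\alpha$ with limit
\begin{equation*}
\beta^{*}:=\frac{n-1}{4}\!\left(\frac{q+r}{p-1}-1\right)\!\left(\frac{n+3}{n-1}-\frac{q+r}{p-1}\right)
\end{equation*}
(the factorization $4(n-1)\beta^{*}=[(n-1)s-(n-1)]\,[(n+3)-(n-1)s]$ with $s=(q+r)/(p-1)$ is a short algebraic check starting from $\beta_{n,p,q,r,\infty}=\tfrac{1}{n-1}-\tfrac{n-1}{4}(\tfrac{n+1}{n-1}-s)^2$), so at $x_0$
\begin{equation*}
\liminf_{\alpha\to\infty}\frac{f^{2-\alpha-p/2}}{2\alpha}\mathcal{L}(f^\alpha)(x_0)\;\ge\;\beta^{*}f(x_0)^2-(n-1)\kappa\,f(x_0).
\end{equation*}

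To realize these inequalities at $F$, invoke the Omori--Yau maximum principle, which applies on $(M,g)$ since $\mathrm{Ric}_g\ge-(n-1)\kappa g$: there is a sequence $x_k\in\{f>0\}$ (eventually, since $F>0$) with $f(x_k)\to F$, $|\nabla f(x_k)|\to 0$, and $\mathrm{Hess}(f^\alpha)(x_k)\le\varepsilon_k I$, $\varepsilon_k\downarrow 0$. Writing $\mathcal{L}(f^\alpha)=\mathrm{tr}\bigl(B\cdot\mathrm{Hess}(f^\alpha)\bigr)+(\text{terms linear in }\nabla f)$ with $B=f^{p/2-1}\bigl(I+(p-2)f^{-1}\nabla u\otimes\nabla u\bigr)$ positive definite for $p>1$ and bounded near $x_k$, and noting $u_{11}(x_k)\to 0$, one obtains $\mathcal{L}(f^\alpha)(x_k)\to 0$. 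Passing to the limit in the displays above yields $F\le(n-1)^2\kappa$ in case~(1) and $F\le(n-1)\kappa/\beta^{*}=4\kappa/[(s-1)(\tfrac{n+3}{n-1}-s)]$ in case~(2); translating via $|\nabla v|/v=\sqrt{F}/(p-1)$ gives precisely the two stated inequalities. The main technical obstacle is the clean application of Omori--Yau to the degenerate quasilinear operator $\mathcal{L}$ together with the limit $\alpha\to\infty$; the cleanest way is to run the argument for each fixed $\alpha\ge\alpha_0$ (yielding the bound with $\beta_{n,p,q,r,\alpha}$ in place of $\beta^{*}$) and only at the end let $\alpha\to\infty$, using the monotone convergence $\beta_{n,p,q,r,\alpha}\uparrow\beta^{*}$.
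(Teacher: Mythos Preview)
Your route via the Omori--Yau maximum principle is genuinely different from the paper's. The paper sets $\omega=(f-y_i-\delta)^+$ (with $y_1=(n-1)^2\kappa$ in case~(1) and $y_2=4\kappa/[(s-1)(\tfrac{n+3}{n-1}-s)]$ in case~(2)), shows the pointwise inequality $\mathcal L(\omega^\alpha)\ge 2\alpha\omega^{\alpha-1}(k_1\omega-k_2|\nabla\omega|)$, and then in Lemma~\ref{lem42} tests against $\eta^2\omega^\gamma$ on a nested family of balls $B_k$; integration by parts for the divergence-form operator $\mathcal L$ together with Bishop--Gromov volume comparison forces geometric growth of $\int_{B_k}\omega^{\alpha+\gamma}$ that is impossible unless $\omega\equiv 0$. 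Everything used there---the weak form of $\mathcal L$ and volume comparison---is available under a Ricci lower bound alone.

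Your argument, however, has a genuine gap at the curvature hypothesis. You invoke a sequence $x_k$ with $\mathrm{Hess}(f^\alpha)(x_k)\le \varepsilon_k I$ in order to control $\mathrm{tr}\bigl(B\cdot\mathrm{Hess}(f^\alpha)\bigr)$ for the anisotropic principal symbol $B=f^{p/2-1}(I+(p-2)f^{-1}\nabla u\otimes\nabla u)$. That is the \emph{Hessian} (Omori) form of the maximum principle, which requires a lower bound on \emph{sectional} curvature; under $\mathrm{Ric}_g\ge -(n-1)\kappa g$ alone, Yau's principle yields only $\Delta(f^\alpha)(x_k)\le\varepsilon_k$, and for $p\neq 2$ this does not control the directional second derivative $\mathrm{Hess}(f^\alpha)(\nabla u,\nabla u)$ appearing in $\mathcal L$. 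I do not see how to close this within a pointwise maximum-principle framework without strengthening the curvature assumption; the paper's integral iteration is designed precisely to bypass this, which is what it buys. (The secondary issue that $f=|\nabla u|^2$ is only $C^{0,\alpha}$ across $\{f=0\}$ is harmless---one can smooth-truncate below level $F/2$---but the Hessian/Ricci mismatch is the real obstruction.)
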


We will prove \thmref{t10} by the following two lemmas
\begin{lem}
Let $(M,g)$ be a complete non-compact Riemannian manifold with $\mathrm{Ric}_g\geq-(n-1)\kappa$. Assume that $v$ is an entire positive solution of \eqref{equa1} in $M$.
\begin{enumerate}
\item If
\begin{align*}
a\left(\frac{n+1}{n-1}-\frac{q+r}{p-1}\right)\geq 0,
\end{align*}
denote $y_1=(n-1)^2\kappa$ and define $\omega = (f-y_1-\delta)^+$ for any $\delta>0$, then we have
		\begin{align*}
			\mathcal{L}(\omega^\alpha)\geq 	2\alpha\omega^{\alpha-1}  \left(k_1\omega-k_2|\nabla \omega|\right),
		\end{align*}
where $\alpha>2$, $k_1$ and $k_2$ are two positive constants depending on $n,p,q,r$ and $\kappa$.
\item If
\begin{align*}
1<\frac{q+r}{p-1}<\frac{n+3}{n-1},\quad  \forall a\in \R,
\end{align*}		
denote
\begin{align*}
y_2 = \frac{4\kappa}{\left(\frac{n+3}{n-1}-\frac{q+r}{p-1}\right)\left( \frac{q+r}{p-1}-1\right)}
\end{align*}
and define $\omega = (f-y_2-\delta)^+$ for any $\delta>0$, then, for $\alpha$ large enough we have
\begin{align*}
\mathcal{L}(\omega^\alpha)\geq 	2\alpha\omega^{\alpha-1}  \left(l_1\omega-l_2|\nabla \omega|\right),
\end{align*}
where $l_1$ and $l_2$ are two positive constants depending on $n,p,q,r$ and $\kappa$.
\end{enumerate}
\end{lem}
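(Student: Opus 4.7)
The strategy is to bootstrap the pointwise bound on $\mathcal{L}(f^\alpha)$ provided by \lemref{lem41} to an analogous one for $\mathcal{L}(\omega^\alpha)$, exploiting the fact that $\nabla\omega=\nabla f$ on the open set $\{\omega>0\}$ (where $u$ is smooth since $\nabla u\neq 0$ there). Writing $\omega^\alpha=\phi(f)$ with $\phi(t)=(t-y_i-\delta)^\alpha$ and applying the general chain-rule identity
$$\mathcal{L}(\phi(f))=\phi''(f)f^{p/2-1}\langle \nabla f,A(\nabla f)\rangle+\phi'(f)\mathcal{L}(f),$$
together with the same identity specialized to $\phi(t)=t^\alpha$ to solve for $\mathcal{L}(f)$ in terms of $\mathcal{L}(f^\alpha)$, a short calculation yields
$$\mathcal{L}(\omega^\alpha)=\alpha(\alpha-1)(y_i+\delta)\,\omega^{\alpha-2}f^{p/2-2}\langle \nabla f,A(\nabla f)\rangle+\omega^{\alpha-1}f^{1-\alpha}\mathcal{L}(f^\alpha).$$
For $\alpha>1$ the first term is non-negative (since $\langle \nabla f,A(\nabla f)\rangle\geq a_2|\nabla f|^2\geq 0$ and $y_i+\delta>0$); discarding it and invoking \lemref{lem41} produces
$$\mathcal{L}(\omega^\alpha)\geq 2\alpha\omega^{\alpha-1}f^{p/2}\bigl[\beta_{n,p,q,r}f-(n-1)\kappa\bigr]-\alpha a_1 f^{(p-1)/2}\omega^{\alpha-1}|\nabla\omega|.$$

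\textbf{Key cancellation.}
The crux of the argument is that the thresholds $y_1$ and $y_2$ are engineered exactly so that $\beta_{n,p,q,r}\,y_i=(n-1)\kappa$: in case~(1) the coefficient is $\beta_{n,p,q,r}=1/(n-1)$, for which $y_1=(n-1)^2\kappa$ fulfills the identity; in case~(2) a direct manipulation of the constant derived in the proof of \lemref{lem41} gives the factorization $\beta_{n,p,q,r}=\tfrac{n-1}{4}\bigl(\tfrac{n+3}{n-1}-\tfrac{q+r}{p-1}\bigr)\bigl(\tfrac{q+r}{p-1}-1\bigr)$, so the defining formula for $y_2$ is precisely $(n-1)\kappa/\beta_{n,p,q,r}$. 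Substituting $f=\omega+y_i+\delta$ then yields the clean cancellation $\beta_{n,p,q,r}f-(n-1)\kappa=\beta_{n,p,q,r}(\omega+\delta)\geq\beta_{n,p,q,r}\omega$, and hence
$$\mathcal{L}(\omega^\alpha)\geq 2\alpha\omega^{\alpha-1}\bigl[\beta_{n,p,q,r}f^{p/2}\omega-(a_1/2)f^{(p-1)/2}|\nabla\omega|\bigr].$$

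\textbf{Closing the estimate; main obstacle.}
It remains to replace the surviving factors of $f$ by genuine constants. On $\{\omega>0\}$ we have $f\geq y_i$, whence $f^{p/2}\geq y_i^{p/2}$ (using $p>1$); the delicate piece is the \emph{upper} bound $f\leq M$ with $M=M(n,p,q,r,\kappa)$ that is needed to control the gradient coefficient $f^{(p-1)/2}$. This is the main technical obstacle, but it is already in hand: applying the local gradient estimate of \thmref{thm1} to the entire positive solution $v$ on $M$ and letting $R\to\infty$ gives $f=|\nabla u|^2\leq M(n,p,q,r)\,\kappa$. Taking $k_1=\beta_{n,p,q,r}y_1^{p/2}$, $k_2=(a_1/2)M^{(p-1)/2}$, and the analogous $l_1,l_2$ in case~(2), delivers the stated inequality. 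The restrictions $\alpha>2$ in case~(1) and ``$\alpha$ large enough'' in case~(2) arise respectively from the $C^{1,1}$ regularity needed for $\omega^\alpha$ across $\{\omega=0\}$ so that $\mathcal{L}(\omega^\alpha)$ is meaningful pointwise a.e., and from the standing requirement $\alpha\geq\alpha_0$ in \lemref{lem41}; the small surplus $\beta_{n,p,q,r}\delta$ discarded above provides the cushion to be consumed in the subsequent $\delta\to 0$ limit within the Nash--Moser-type iteration that delivers the sharp global gradient bound of \thmref{t10}.
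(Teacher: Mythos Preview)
Your proposal is correct and follows essentially the same route as the paper: both derive the comparison $\mathcal{L}(\omega^\alpha)\geq\omega^{\alpha-1}f^{1-\alpha}\mathcal{L}(f^\alpha)$ on $\{\omega>0\}$, feed in the pointwise bound from \lemref{lem41} (or rather the intermediate inequalities \eqref{eq412} and \eqref{equa:3.15a} inside its proof), exploit the exact identity $\beta\, y_i=(n-1)\kappa$ to strip the $\kappa$-term, and then use the two-sided bound $y_i+\delta\leq f\leq C(n,p,q,r)\kappa$ (the upper bound coming from \thmref{thm1} with $R\to\infty$) to replace the residual powers of $f$ by constants. Your explicit identity $\mathcal{L}(\omega^\alpha)=\alpha(\alpha-1)(y_i+\delta)\omega^{\alpha-2}f^{p/2-2}\langle\nabla f,A(\nabla f)\rangle+\omega^{\alpha-1}f^{1-\alpha}\mathcal{L}(f^\alpha)$ is in fact a slightly cleaner packaging than the paper's, which reaches the same inequality via $\omega^{\alpha-2}\geq\omega^{\alpha-1}f^{-1}$.

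One small correction: in case~(2) the reason ``$\alpha$ large enough'' is needed is not merely the threshold $\alpha\geq\alpha_0$ from \lemref{lem41}. The inequality you invoke there carries the $\alpha$-dependent coefficient $\beta_{n,p,q,r,\alpha}$, which only \emph{tends} to the factored limit $\tfrac{n-1}{4}\bigl(\tfrac{n+3}{n-1}-\tfrac{q+r}{p-1}\bigr)\bigl(\tfrac{q+r}{p-1}-1\bigr)$ as $\alpha\to\infty$; the cancellation $\beta_{n,p,q,r,\alpha}(y_2+\delta)-(n-1)\kappa>0$ is therefore only approximate, and one must choose $\alpha$ large enough that this quantity exceeds, say, $\tfrac12\beta_{n,p,q,r}\delta$. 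The paper handles this explicitly; your cushion remark about the surplus $\beta_{n,p,q,r}\delta$ already contains the fix once you track the $\alpha$-dependence.
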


\begin{proof}
According to \lemref{lem41}, we know that, if \eqref{61} is satisfied, then there holds
\begin{align*}
		\frac{f^{2-\alpha-\frac{p}{2}}}{2\alpha}
		\mathcal{L} (f^{\alpha}) \geq&
		\frac{f^2}{n-1}
		-(n-1)\kappa f
		-\frac{a_1}{2} f^{\frac{1}{2}}|\nabla f|.
\end{align*}
On $\{f\geq y_1+\delta\}$ we have
\begin{align*}
\mathcal{L}(\omega^\alpha) = \di(\alpha\omega^{\alpha-1}A(\nabla f))=\alpha(\alpha-1)\omega^{\alpha-2}\langle\nabla\omega, A(\nabla f)\rangle+\alpha\omega^{\alpha-1}\mathcal{L}(f).
\end{align*}
In the above expression, it is easy to see that $\nabla\omega=\nabla f$ in $\{f>y_1+\delta\}$, but $\nabla\omega$ causes a distribution on $\{f=y_1+\delta\}$. Now, if we assume $\alpha>2$, then the distribution caused by $\nabla\omega$ on $\{f=y_1+\delta\}$ is eliminated by $\omega$ since $\omega=0$  on $\{f=y_1+\delta\}$. Since $f\geq \omega$ and $\langle\nabla f, A(\nabla f)\rangle\geq 0$, we can infer easily from the above identity
	\begin{align*}
		\mathcal{L}(\omega^\alpha)
		\geq &\ \omega^{\alpha-1}\left(\alpha(\alpha-1)f^{-1}\langle\nabla f, A(\nabla f)\rangle+\alpha \mathcal{L}(f)\right)=\ \frac{\omega^{\alpha-1}}{f^{\alpha-1}}\mathcal{L}(f^\alpha).
	\end{align*}
It follows from \eqref{eq412} and the above inequality that
	\begin{align*}
		\mathcal{L} (f^{\alpha})\geq &\ 2\alpha\omega^{\alpha-1}f^{\frac{p}{2}-1}  \left(\frac{f^2}{n-1}-(n-1)\kappa f-\frac{a_1}{2}f^{\frac{1}{2}}|\nabla \omega|\right).
	\end{align*}
Noting $f\geq y_1+\delta = (n-1)^2\kappa+\delta$, we have
\begin{align*}
\mathcal{L} (f^{\alpha})\geq &\ 2\alpha\omega^{\alpha-1}f^{\frac{p-1}{2}}  \left(f^{\frac{1}{2}}\left(\frac{f}{n-1}-(n-1)\kappa \right)-\frac{a_1}{2}|\nabla \omega|\right)\\
\geq& \ 2\alpha\omega^{\alpha-1}f^{\frac{p-1}{2}}  \left(f^{\frac{1}{2}} \frac{\delta}{n-1} -\frac{a_1}{2}|\nabla \omega|\right).
\end{align*}
Using the fact $$y_1+\delta\leq f\leq c(n,p,q,r)\sqrt{\kappa}$$ on $\{f\geq y_1+\delta\}$, we have
\begin{align*}
\mathcal{L}(f^\alpha)\geq 	2\alpha\omega^{\alpha-1}  \left(k_1\omega-k_2|\nabla \omega|\right), \quad\forall \alpha >2,
\end{align*}
where $k_1$ and $k_2$ are two positive constants depending on $n,\, p,\, q,\, r,\, \kappa$ and $\delta$.
	
If \eqref{62} is satisfied, by \eqref{equa:3.15a} we have
\begin{align*}
\frac{f^{2-\alpha-\frac{p}{2}}}{2\alpha}\mathcal{L} (f^{\alpha})\geq &\ \beta_{n,p,q,r,\alpha}f^2-(n-1)\kappa f -\frac{a_1}{2}f^{\frac{1}{2}}|\nabla f|,
\end{align*}
where
$$
\beta_{n,p,q,r} =\lim_{\alpha\to\infty}\beta_{n,p,q,r,\alpha}= \frac{1}{n-1}-\frac{n-1}{4}\left(\frac{q+r}{p-1}-\frac{n+1}{n-1}\right)^2>0.
$$
Then, by taking a computation we have on $\{f\geq y_2+\delta\}$
\begin{align*}
\mathcal L(\omega^\alpha) \geq & \ 2\alpha\omega^{\alpha-1}f^{\frac{p-1}{2}}  \left(f^{\frac{1}{2}}\left(\beta_{n, p, q, r,\alpha} f -(n-1)\kappa \right)-\frac{a_1}{2}|\nabla \omega|\right).
\end{align*}
Since
	$$
	\lim_{\alpha\to\infty}\beta_{n, p, q, r,\alpha} f  -(n-1)\kappa=\left(\frac{1}{n-1}-\frac{n-1}{4}\left(\frac{q+r}{p-1}-\frac{n+1}{n-1}\right)^2\right)\delta=\beta_{n,p,q,r}\delta,
	$$
	we can choose $\alpha$ large enough such that
	$$
	\beta_{n, p, q, r,\alpha} f  -(n-1)\kappa>\frac{1}{2}\beta_{n,p,q,r}\delta.
	$$
It follows
\begin{align*}
\mathcal L(\omega^\alpha) \geq & \ 2\alpha\omega^{\alpha-1}f^{\frac{p-1}{2}}  \left(f^{\frac{1}{2}}\frac{\beta_{n,p,q,r}\delta}{2}-\frac{a_1}{2}|\nabla \omega|\right)\\
		=& 2\alpha\omega^{\alpha-1} \left(f^{\frac{p}{2}} \frac{\beta_{n,p,q,r}\delta}{2}-\frac{a_1}{2}f^{\frac{p-1}{2}}|\nabla \omega|\right).
\end{align*}
Using the fact $$y_2+\delta\leq f\leq c(n,p,q,r)\sqrt{\kappa}\quad \text{and}\quad f\geq\omega$$ on $\{f\geq y_2+\delta\}$, we have
\begin{align*}
\mathcal{L}(\omega^\alpha)\geq& 2\alpha\omega^{\alpha-1} \left(f^{\frac{p}{2}-1} \frac{\beta_{n,p,q,r}\delta}{2}\omega-\frac{a_1}{2}f^{\frac{p-1}{2}}|\nabla \omega|\right)\\
\geq &2\alpha\omega^{\alpha-1}(l_1\omega-l_2|\nabla\omega|)
\end{align*}
where $l_1$ and $l_2$ are two positive constants depending on $n,\,p,\,q,\,r,\,\delta$ and $\kappa$.
\end{proof}

\begin{lem}\label{lem42}
Let $(M,g)$ be an $n$-dim($n\geq 2$) complete Riemannian manifold satisfying $\mathrm{Ric}_g\geq-(n-1)\kappa g$ for some constant $\kappa\geq0$. Assume that $v\in C^1(M)$ is an entire positive solution of \eqref{equa1}. If $\omega = (f-y)^+$ for some $y>0$ satisfies the following inequality
$$	 		
\mathcal L(\omega^\alpha)\geq \omega^{\alpha-1}(b_1\omega-b_2|\nabla\omega|)
$$
where $b_1$, $b_2$ and $\alpha$ are some positive constants, then $\omega\equiv 0$, i.e.,  $f\leq y$.
\end{lem}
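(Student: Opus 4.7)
The plan is to combine a Caccioppoli-type estimate, obtained by integration by parts and Young's inequality, with an iteration over concentric balls of bounded step size to force $\omega\equiv 0$.

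First, by the local gradient estimate of \thmref{thm1} applied to the entire solution $v$ and letting the radius tend to infinity, $f=|\nabla u|^2$ is globally bounded on $M$; hence $\omega\leq f$ is bounded, say $\omega\leq\bar\omega:=\|\omega\|_{L^\infty(M)}$. On $\{\omega>0\}=\{f>y\}$, $u$ is smooth (by elliptic regularity, since $\nabla u\neq 0$ there) and $f^{p/2-1}$ is pinched between two positive constants $C_3\leq f^{p/2-1}\leq C_5$. Since $\omega\equiv 0$ outside $\{f>y\}$ and $\omega^{\beta-1}$ vanishes there for $\beta>1$, boundary contributions in the integration by parts below vanish automatically.

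Next, I would test the hypothesis against $\omega^{\beta-1}\eta^2$ for a cutoff $\eta\in C_c^\infty(M)$ and a parameter $\beta\gg 1$ to be chosen. Using $A(\nabla\omega^\alpha)=\alpha\omega^{\alpha-1}A(\nabla\omega)$, the positivity $\langle\nabla\omega,A(\nabla\omega)\rangle\geq a_2|\nabla\omega|^2$ with $a_2=\min\{1,p-1\}$, and integration by parts, the LHS produces a good gradient term $-a_2\alpha(\beta-1)\int f^{p/2-1}\omega^{\gamma-2}|\nabla\omega|^2\eta^2$ (with $\gamma:=\alpha+\beta-1$) and a cutoff cross-term $\sim\alpha\int f^{p/2-1}\omega^{\gamma-1}\eta|\nabla\eta||\nabla\omega|$. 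Applying Young's inequality twice---to absorb both the $b_2\omega^{\gamma-1}|\nabla\omega|\eta^2$ coming from the hypothesis and the cutoff cross-term into the good gradient term---and choosing $\beta\geq 1+\tfrac{2b_2^2}{a_2\alpha C_3 b_1}$ so that the leftover $\tfrac{b_2^2}{a_2\alpha(\beta-1)C_3}\int\omega^\gamma\eta^2$ contribution is dominated by $\tfrac{b_1}{2}\int\omega^\gamma\eta^2$, yields the Caccioppoli estimate
$$\frac{b_1}{2}\int_M\omega^\gamma\eta^2\leq\frac{C_*}{\beta-1}\int_M\omega^\gamma|\nabla\eta|^2,$$
where $C_*=4\alpha(1+|p-2|)^2 C_5/a_2$ depends only on $\alpha,p$ and the bounds on $f$.

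The iteration step: fix $R_0>0$ and a step size $r_0>0$, set $R_k=R_0+kr_0$, and take $\eta_k\in C_c^\infty$ with $\eta_k\equiv 1$ on $B_{R_k}(o)$, $\operatorname{supp}\eta_k\subset B_{R_{k+1}}(o)$, $|\nabla\eta_k|\leq 2/r_0$. The Caccioppoli gives $\int_{B_{R_k}}\omega^\gamma\leq\lambda\int_{B_{R_{k+1}}}\omega^\gamma$ with $\lambda:=\tfrac{8C_*}{b_1(\beta-1)r_0^2}$. Iterating $K$ times, combining with Bishop--Gromov's bound $V(B_{R_K})\leq Ce^{(n-1)\sqrt{\kappa}R_K}$ and the $L^\infty$ bound on $\omega$, yields
$$\int_{B_{R_0}}\omega^\gamma\leq C\bar\omega^\gamma e^{(n-1)\sqrt{\kappa}R_0}\bigl(\lambda\, e^{(n-1)\sqrt{\kappa}r_0}\bigr)^K.$$
The main obstacle is to arrange $\lambda e^{(n-1)\sqrt{\kappa}r_0}<1$, which amounts to requiring $\beta-1>\tfrac{8C_*e^{(n-1)\sqrt{\kappa}r_0}}{b_1 r_0^2}$; since the Caccioppoli holds for all sufficiently large $\beta$ with $r_0$ fixed, such a choice is always possible. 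With this choice, sending $K\to\infty$ forces $\int_{B_{R_0}}\omega^\gamma=0$, so $\omega\equiv 0$ on $B_{R_0}(o)$; since $R_0>0$ is arbitrary, $\omega\equiv 0$ on $M$, i.e., $f\leq y$.
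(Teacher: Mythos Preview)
Your proposal is correct and follows essentially the same strategy as the paper: multiply by a test function $\omega^{\beta-1}\eta^2$, integrate by parts, absorb the $|\nabla\omega|$ terms via Young's inequality to obtain a Caccioppoli inequality, then iterate over concentric balls of fixed step and compare against the Bishop--Gromov exponential volume bound, choosing the exponent $\beta$ (the paper's $\gamma$) large enough to win. The only cosmetic differences are that you run the iteration as an upper bound tending to zero while the paper argues by contradiction via a lower bound outpacing volume, and you track the pinching constants $C_3\leq f^{p/2-1}\leq C_5$ explicitly whereas the paper simply notes $f$ is uniformly bounded on the support of $\omega$.
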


\begin{proof}
Let $\eta\in C^{\infty}_0(M,\mathbb R)$ be a cut-off function to be determine later. We choose $\eta^2\omega^\gamma$ as a test function, then we have
\begin{align*}
\int_M\mathcal L(\omega^\alpha)\omega^{\gamma}\eta^2\geq \int_M2\alpha\omega^{\alpha+\gamma-1}  \left(b_1\omega-b_2|\nabla \omega|\right)\eta^2.
\end{align*}
	
We omit the term $f^{\frac{p}{2}-1}$ since $f$ is uniform bounded on $M$. By integration by parts, we obtain
\begin{align*}
&\int_M2\alpha\omega^{\alpha+\gamma-1}  \left(b_1\omega-b_2|\nabla \omega|\right)\eta^2\\
\leq& -\int_M\alpha\gamma\omega^{\alpha+\gamma-2}\eta^2(|\nabla\omega|^2+(p-2)f^{-1}\langle\nabla\omega, \nabla u\rangle^2)\\
&-\int_M2\alpha\eta\omega^{\alpha+\gamma-1}\left(\langle\nabla\omega, \nabla \eta \rangle+(p-2)f^{-1}\langle\nabla\omega,\nabla\eta\rangle\langle \nabla u,\nabla\eta\rangle\right).
\end{align*}
It follows
\begin{align*}
&\int_M2 \omega^{\alpha+\gamma-1}  \left(b_1\omega-b_2|\nabla \omega|\right)\eta^2+a_1\int_M \gamma\omega^{\alpha+\gamma-2}\eta^2|\nabla\omega|^2\\
\leq &2(p+1)\int_M\omega^{\alpha+\gamma-1}|\nabla\omega||\nabla \eta|  \eta.
\end{align*}
Cauchy inequality implies
\begin{align*}
2b_2 \omega^{\alpha+\gamma-1}  |\nabla \omega| \eta^2\leq&\ b_2\eta^2\omega^{\alpha+\gamma-2}\left(\frac{|\nabla\omega|^2}{\epsilon}+\epsilon\omega^2\right) ;\\
2 \eta(p+1) \omega^{\alpha+\gamma-1}|\nabla\omega||\nabla \eta|
\leq& \ (p+1)  \omega^{\alpha+\gamma-2}\left(\frac{\eta^2|\nabla\omega|^2}{\epsilon}+\epsilon|\nabla\eta|^2\omega^2\right).
\end{align*}
If we choose $\epsilon$ such that
\begin{align*}
\frac{b_2+p+1}{\epsilon}=a_1\gamma,
\end{align*}
then we have
\begin{align*}
&\int_M2b_1 \omega^{\alpha+\gamma}\eta^2\leq\int_M  \epsilon(p+1)  \omega^{\alpha+\gamma }|\nabla\eta|^2 +\int_M\epsilon b_2 \omega^{\alpha+\gamma} \eta^2 .
\end{align*}
Picking $\gamma$ large enough such that $\epsilon b_2<b_1$, we obtain
\begin{align*}
&b_1 \int_M \omega^{\alpha+\gamma}\eta^2 \leq \int_M \epsilon(p+1)\omega^{\alpha+\gamma }|\nabla\eta|^2.
\end{align*}
Now, if $\omega\neq 0$, without loss of generality we may assume that $\omega|_{B_1}\neq 0$ for some geodesic ball $B_1$ with radius 1, it follows that there always holds true $$\int_{B_1}\omega^{\alpha+\gamma}>0.$$
By choosing $\eta\in C^{\infty}_0(B_{k+1})$, $\eta\equiv 1$ in $B_k$ and $|\nabla\eta|<4$, from the above argument we have
	\begin{align*}
		b_1\int_{B_k}\omega^{\alpha+\gamma}\leq 4\epsilon(p+1) \int_{B_{k+1}}\omega^{\alpha+\gamma}.
	\end{align*}
	By iteration on $k$, we have
	\begin{align*}
		\int_{B_k}\omega^{\alpha+\gamma}\geq \left(\frac{C}{\epsilon}\right)^k\int_{B_1}\omega^{\alpha+\gamma}.
	\end{align*}
	Since $\omega$ is uniformly bounded and $\vol(B_{k})\leq e^{(n-1)k\sqrt{\kappa}}$ by volume comparison theorem, we have
	\begin{align*}
		C_1^{\alpha+\gamma}e^{(n-1)k\sqrt{\kappa}}\geq e^{k\ln\frac{C}{\epsilon}}.
	\end{align*}
	We choose $\gamma$ such that $$\ln\frac{C}{\epsilon}>2(n-1)\sqrt{\kappa}+2$$ and then choose $k$ large enough, immediately we get a contradiction. This means $\omega\equiv 0$ and we finish the proof of this lemma.
\end{proof}
\thmref{t10} follows immediately from the above two lemmas.

Note $y_2<y_1$ and the two cases have some intersection. If we remove the intersection, we can get the following corollary.

\begin{cor}
Let $(M,g)$ be an $n$-dim($n\geq 2$) complete Riemannian manifold satisfying $\mathrm{Ric}_g\geq-(n-1)\kappa g$ for some constant $\kappa\geq0$. Assume that $v\in C^1(M)$ is an entire positive solution of \eqref{equa1} with $a>0$. Then,
\begin{enumerate}
\item if
\begin{align*}
\frac{q+r}{p-1} \leq \frac{n+1}{n-1},
\end{align*}
there holds
\begin{align*}
\frac{|\nabla v|}{v}(x)\leq \frac{(n-1)\sqrt{\kappa}}{p-1}, \quad\forall x\in M;
\end{align*}

\item if
\begin{align*}
\frac{n+1}{n-1}<\frac{q+r}{p-1}<\frac{n+3}{n-1},\quad  \forall a\in \R,
\end{align*}
there holds
\begin{align*}
\frac{|\nabla v|}{v}(x)\leq \frac{2\sqrt{\kappa}}{(p-1)\sqrt{\left(\frac{q+r}{p-1}-1\right)\left(\frac{n+3}{n-1}-\frac{q+r}{p-1}\right)}}, \quad\forall x\in M.
\end{align*}
\end{enumerate}
\end{cor}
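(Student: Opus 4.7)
The plan is to work with the log-transformation $u := -(p-1)\log v$, so that $f := |\nabla u|^2 = (p-1)^2|\nabla v|^2/v^2$, and translate the claimed estimates into the pointwise bounds $f \leq y_1 := (n-1)^2\kappa$ in Case (1) and $f \leq y_2 := 4\kappa/\bigl[(\tfrac{q+r}{p-1}-1)(\tfrac{n+3}{n-1}-\tfrac{q+r}{p-1})\bigr]$ in Case (2). Theorem \ref{thm1}, applied by sending $R\to\infty$, already gives the crude bound $f \leq C(n,p,q,r)\kappa$ on $M$; what is missing is the precise constant $y_i$. The strategy is to fix an arbitrary $\delta > 0$, set $\omega := (f - y_i - \delta)^+$, and show that $\omega \equiv 0$; sending $\delta \to 0$ then yields the theorem.

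The first step is to upgrade the pointwise Bochner-type identity from \lemref{lem41} into a differential inequality for $\omega^\alpha$ valid on all of $M$. Examining the proof of that lemma shows that on $\{f>0\}$ we have, in Case (1),
\begin{equation*}
\frac{f^{2-\alpha-p/2}}{2\alpha}\mathcal{L}(f^\alpha) \geq \frac{f^2}{n-1} - (n-1)\kappa f - \frac{a_1}{2}f^{1/2}|\nabla f|,
\end{equation*}
and an analogous bound with leading coefficient $\beta_{n,p,q,r,\alpha} f^2$ (tending to $\beta_{n,p,q,r}f^2 > 0$ as $\alpha\to\infty$) in Case (2). By the very definition of $y_i$, the Bochner quadratic in $f$ dominates the linear term $-(n-1)\kappa f$ by an amount proportional to $\delta$ on $\{f \geq y_i + \delta\}$, provided in Case (2) we take $\alpha$ large enough that $\beta_{n,p,q,r,\alpha} > \beta_{n,p,q,r}/2$. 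Since $\alpha > 2$, the factor $\omega^{\alpha-1}$ kills the distributional jump of $\nabla\omega$ across $\{f = y_i + \delta\}$, and using $\langle A(\nabla f),\nabla f\rangle \geq 0$ together with $f \geq \omega$ yields the clean inequality
\begin{equation*}
\mathcal{L}(\omega^\alpha) \geq 2\alpha\, \omega^{\alpha-1}\bigl(k_1 \omega - k_2 |\nabla\omega|\bigr) \quad\text{on } M,
\end{equation*}
with positive constants $k_1, k_2$ depending on $n,p,q,r,\kappa,\delta$ (uniform positivity uses the a priori $L^\infty$ bound on $f$).

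Next, the plan is to run a test-function / Saint-Venant iteration to force $\omega \equiv 0$. Multiplying the inequality above by $\eta^2 \omega^\gamma$ for a nonnegative cutoff $\eta \in C^\infty_0(M)$ and an exponent $\gamma$ to be fixed, integrating over $M$ and using integration by parts on the left-hand side, the ellipticity $\langle A(\nabla\omega),\nabla\omega\rangle \geq \min\{1,p-1\}|\nabla\omega|^2$, and Young's inequality to absorb the cross terms $|\nabla\omega|\,|\nabla\eta|$, one obtains
\begin{equation*}
b_1 \int_M \omega^{\alpha+\gamma}\eta^2 \leq C \int_M \omega^{\alpha+\gamma}|\nabla\eta|^2
\end{equation*}
for some $b_1 > 0$ once $\gamma$ is chosen large enough. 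Suppose for contradiction that $\omega \not\equiv 0$; after recentring, $\int_{B_1(o)}\omega^{\alpha+\gamma} > 0$. Choosing $\eta$ identically $1$ on $B_k(o)$, supported in $B_{k+1}(o)$, with $|\nabla\eta|\leq 4$, gives $\int_{B_k}\omega^{\alpha+\gamma} \geq (b_1/4C)^{k}\int_{B_1}\omega^{\alpha+\gamma}$ by iteration. Combined with the uniform bound on $\omega$ and the volume growth estimate $\vol(B_k) \leq C e^{(n-1)k\sqrt{\kappa}}$ from Bishop--Gromov, this forces a contradiction once $\gamma$ is so large that $\ln(b_1/4C) > (n-1)\sqrt{\kappa}$, proving $\omega \equiv 0$.

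The main obstacle is pinning down the strict positivity of the ``Bochner quadratic minus linear correction'' exactly on $\{f \geq y_i + \delta\}$: this is where the precise thresholds $(n-1)\sqrt{\kappa}/(p-1)$ and $2\sqrt{\kappa}/\bigl[(p-1)\sqrt{D}\bigr]$ enter, since $y_1$ (resp.\ $y_2$) is by construction the unique value at which the quadratic coefficient just cancels $(n-1)\kappa f$ on the boundary of the excluded region. In Case (2) the extra subtlety is that the optimal coefficient $\beta_{n,p,q,r}$ is only achieved in the limit $\alpha\to\infty$, so we must take $\alpha$ large while keeping the iteration constants $k_1, k_2, b_1$ uniformly bounded away from zero; this is precisely where the sharp constant in the second bound emerges.
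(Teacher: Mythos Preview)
Your proposal is correct and follows exactly the paper's method for proving \thmref{t10} (the two lemmas in Section~6: the Bochner-type lower bound for $\mathcal{L}(\omega^\alpha)$ with $\omega=(f-y_i-\delta)^+$, followed by the test-function/volume-growth contradiction argument of \lemref{lem42}). The only difference is organizational: in the paper this corollary is not re-proved from scratch but is obtained in one line from the already-established \thmref{t10} by specializing to $a>0$ (so that condition~\eqref{61} becomes $\tfrac{q+r}{p-1}\leq\tfrac{n+1}{n-1}$) and then partitioning the admissible range of $\tfrac{q+r}{p-1}$ into the two non-overlapping intervals; you have instead re-run the full argument of \thmref{t10}, which is correct but more than the corollary itself requires.
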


\begin{figure}[ht]\label{fig3}
\centering
	\subfigure[$a>0$]{
		\begin{minipage}[t]{0.35\linewidth}
			\begin{tikzpicture}
				\draw[help lines, color=red!5, dashed] (-0.5,-0.5) grid (4,4);
				\path[fill=green](-0.5, 3.5)--(-0.5,-0.5)--(3.5, -0.5);
				\path[fill=yellow](-0.5, 3.5)--(-0.5,4.5)--(4.5, -0.5)--(3.5, -0.5);
				\draw[dotted]
				(-0.5, 4.5)--(0.1,3.9) node[anchor=north west]{\rotatebox{315}{$\frac{q}{p-1}+\frac{r}{p-1}=\frac{n+3}{n-1}$}} --(4.5, -0.5);
				\draw[dotted]
				(-0.5, 3.5)--(0.5,2.5) node[anchor=north west]{\rotatebox{315}{$\frac{q}{p-1}+\frac{r}{p-1}=\frac{n+1}{n-1}$} }--(3.5, -0.5);
				\filldraw[black] (0,0) circle (1pt) node[anchor=north]{$(0,0)$};
				\draw[->,ultra thick] (-0.5,0)--(4.5,0) node[right]{$r$};
				\draw[->,ultra thick] (0,-0.5)--(0,4.5) node[above]{$q$};
			\end{tikzpicture}			
		\end{minipage}
	}%
	\subfigure[$a<0$]{
		\begin{minipage}[t]{0.35\linewidth}
			\begin{tikzpicture}
				\draw[help lines, color=red!5, dashed] (-0.5,-0.5) grid (4,4);
				\path[fill=green](-0.5, 3.5)--(-0.5,4.5)--(4.5, 4.5)--(4.5, -0.5)--(3.5, -0.5);
				\path[fill=yellow](-0.5, 3.5)--(-0.5,2.5)--(2.5, -0.5)--(3.5, -0.5);
				\draw[dotted]
				(-0.5, 2.5)--(-0.2, 2.2) node[anchor=north west]{\rotatebox{315}{$\frac{q}{p-1}+\frac{r}{p-1}=1$}} --(2.5, -0.5);
				\draw[dotted]
				(-0.5, 3.5)--(0.5,2.5) node[anchor=north west]{\rotatebox{315}{$\frac{q}{p-1}+\frac{r}{p-1}=\frac{n+1}{n-1}$} }--(3.5, -0.5);
				\filldraw[black] (0,0) circle (1pt) node[anchor=north]{$(0,0)$};
				\draw[->,ultra thick] (-0.5,0)--(4.5,0) node[right]{$r$};
				\draw[->,ultra thick] (0,-0.5)--(0,4.5) node[above]{$q$};
			\end{tikzpicture}			
		\end{minipage}
	}%
	\begin{tikzpicture}			
		\path[fill=green](2, 5)--(3,5)--(3,5.2) node[anchor=west]{$|\nabla u|^2<y_1$}--(3,5.5)--(2,5.5);
		\path[fill = yellow](2, 6)--(3,6)--(3,6.2) node[anchor=west]{$|\nabla u|^2<y_2$}--(3,6.5)--(2,6.5);
		\path[fill=white](2, 3)--(3,3)--(3,3.2)  --(3,3.5)--(2,3.5);
	\end{tikzpicture}
	\centering
	\caption{ The green region is the $(q,r)$ where $\frac{|\nabla v|}{v}$ is bounded by $\frac{(n-1)\sqrt{\kappa}}{p-1}$;The yellow region  is the $(q,r)$ where $\frac{|\nabla v|}{v}$ is bounded by $$\frac{2\sqrt{\kappa}}{(p-1)\sqrt{\left(\frac{q+r}{p-1}-1\right)\left(\frac{n+3}{n-1}-\frac{q+r}{p-1}\right)}}.$$}
\end{figure}

\begin{cor}
Let $(M,g)$ be an $n$-dim($n\geq 2$) complete Riemannian manifold satisfying $\mathrm{Ric}_g\geq-(n-1)\kappa g$ for some constant $\kappa\geq0$. Assume that $v\in C^1(M)$ is an entire positive solution of \eqref{equa1} with $a<0$.
	\begin{enumerate}
		\item If
		\begin{align*}
			\frac{q+r}{p-1} \geq \frac{n+1}{n-1},
		\end{align*}
		there holds
		\begin{align*}
			\frac{|\nabla v|}{v}(x)\leq \frac{(n-1)\sqrt{\kappa}}{p-1}, \quad\forall x\in M.
		\end{align*}
		\item If
		\begin{align*}
			1<\frac{q+r}{p-1}<\frac{n+1}{n-1},\quad  \forall a\in \R,
		\end{align*}
there holds
\begin{align*}
\frac{|\nabla v|}{v}(x)\leq \frac{2\sqrt{\kappa}}{(p-1)\sqrt{\left(\frac{q+r}{p-1}-1\right)\left(\frac{n+3}{n-1}-\frac{q+r}{p-1}\right)}}, \quad\forall x\in M.
\end{align*}
\end{enumerate}
\end{cor}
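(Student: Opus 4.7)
The plan is to observe that this corollary is an immediate case-split rewriting of \thmref{t10} specialized to the sign $a<0$, so the proof requires no new analysis—only checking which hypothesis of \thmref{t10} is met in each of the two regions of the $(q,r)$-plane.

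First I would handle part (1). Under the assumption $\frac{q+r}{p-1}\geq \frac{n+1}{n-1}$, the factor $\frac{n+1}{n-1}-\frac{q+r}{p-1}$ is non-positive. Multiplying by $a<0$ flips the sign, so
\[
a\left(\frac{n+1}{n-1}-\frac{q+r}{p-1}\right)\geq 0,
\]
which is precisely condition \eqref{61}. Therefore the first conclusion of \thmref{t10} applies verbatim and yields $\frac{|\nabla v|}{v}\leq \frac{(n-1)\sqrt{\kappa}}{p-1}$ on $M$.

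Next I would handle part (2). The hypothesis $1<\frac{q+r}{p-1}<\frac{n+1}{n-1}$ together with $\frac{n+1}{n-1}<\frac{n+3}{n-1}$ immediately gives $1<\frac{q+r}{p-1}<\frac{n+3}{n-1}$, which is condition \eqref{62}. (Note that in this range $a\left(\frac{n+1}{n-1}-\frac{q+r}{p-1}\right)<0$, so condition \eqref{61} is \emph{not} available; this is why the second, sharper-in-range-but-larger-in-bound case of \thmref{t10} is the right one to invoke.) Applying the second conclusion of \thmref{t10} directly gives the stated bound
\[
\frac{|\nabla v|}{v}(x)\leq \frac{2\sqrt{\kappa}}{(p-1)\sqrt{\left(\frac{q+r}{p-1}-1\right)\left(\frac{n+3}{n-1}-\frac{q+r}{p-1}\right)}}.
\]

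Since both parts follow by direct appeal to \thmref{t10}, there is essentially no obstacle: the only substantive content is the sign analysis of $a\bigl(\tfrac{n+1}{n-1}-\tfrac{q+r}{p-1}\bigr)$. The corollary should really be read as a geometric picture of where in the $(q,r)$-plane each bound dominates when $a<0$, exactly mirroring the previous corollary for $a>0$, and the companion figure makes this partition explicit.
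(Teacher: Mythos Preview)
Your proposal is correct and matches the paper's treatment exactly: the paper gives no separate proof of this corollary, presenting it (together with the companion $a>0$ corollary) as the immediate case-split of \thmref{t10} obtained by ``removing the intersection'' of the two regimes. Your sign analysis of $a\bigl(\tfrac{n+1}{n-1}-\tfrac{q+r}{p-1}\bigr)$ is precisely the verification the paper leaves implicit.
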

\medskip
	
\noindent {\it\bf{Acknowledgements}}: The author Y. Wang is supported partially by National Key Research and Development projects of China (Grant No. 2020YFA0712500).
The author J. Hu is supported by National Natural Science Foundation of China(Grant No. 12288201) and the Project of Stable Support for Youth Team in Basic Research Fields, CAS(Grant No. YSBR-001).
\medskip\medskip\medskip
	
\bibliographystyle{plain}

\begin{thebibliography}{10}

\bibitem{MR1004713}
M.-F. Bidaut-V\'{e}ron.
\newblock Local and global behavior of solutions of quasilinear equations of {E}mden-{F}owler type.
\newblock {\em Arch. Rational Mech. Anal.}, 107(4):293--324, 1989.

\bibitem{BP}
M.-F. Bidaut-V\'{e}ron and S. Pohozaev.
\newblock Nonexistence results and estimates for some nonlinear elliptic problems.
\newblock{\em J. Anal. Math.}, 84 (2001), 1--49.
		
\bibitem{MR1134481}
M.-F. Bidaut-V\'{e}ron and L. V\'{e}ron.
\newblock Nonlinear elliptic equations on compact {R}iemannian manifolds and asymptotics of {E}mden equations.
\newblock {\em Invent. Math.}, 106(3):489--539, 1991.
		
		
\bibitem{MR3261111}
M.-F. Bidaut-V\'{e}ron, M. Garcia-Huidobro, and L. V\'{e}ron.
\newblock Local and global properties of solutions of quasilinear {H}amilton-{J}acobi equations.
\newblock {\em J. Funct. Anal.}, 267(9):3294--3331, 2014.
		
\bibitem{MR3959864}
M.-F. Bidaut-V\'{e}ron, M. Garc\'{\i}a-Huidobro, and L. V\'{e}ron.
\newblock Estimates of solutions of elliptic equations with a source reaction term involving the product of the function and its gradient.
\newblock {\em Duke Math. J.}, 168(8):1487--1537, 2019.
		
\bibitem{MR4234084}
M.-F. Bidaut-V\'{e}ron.
\newblock Liouville results and asymptotics of solutions of a quasilinear elliptic equation with supercritical source gradient term.
\newblock {\em Adv. Nonlinear Stud.}, 21(1):57--76, 2021.

\bibitem{BL}
H. Br\'ezis and P. L. Lions.
\newblock ``A note on isolated singularities for linear elliptc equations" in Mathematical Analysis and Applications, Part A,
\newblock Academic Press, New York, 1981, 263-266.

\bibitem{CGS}L. Caffarelli, N. Garofalo and F. Segala.
\newblock A gradient bound for entire solutions of quasi-linear equations and its consequences.
\newblock{\em Comm. Pure Appl. Math.}, 47 (1994), 1457-1473.

\bibitem{MR982351}
L.~A. Caffarelli, B. Gidas, and J. Spruck.
\newblock Asymptotic symmetry and local behavior of semilinear elliptic equations with critical {S}obolev growth.
\newblock {\em Comm. Pure Appl. Math.}, 42(3):271--297, 1989.
		
\bibitem{MR4400071}
C.-H. Chang, B. Hu, and Z.-C. Zhang.
\newblock Liouville-type theorems and existence of solutions for quasilinear elliptic equations with nonlinear gradient terms.
\newblock {\em Nonlinear Anal.}, 220:Paper No. 112873, 29, 2022.
		
\bibitem{MR1121147}
W.-X. Chen and C.-M. Li.
\newblock Classification of solutions of some nonlinear elliptic equations.
\newblock {\em Duke Math. J.}, 63(3):615--622, 1991.

\bibitem{CY}
S. Y. Cheng and  S. T. Yau.
\newblock Differential equations on Riemannian manifolds and their geometric applications.
\newblock{\em Comm. Pure Appl. Math.} 28 (1975), no. 3, 333--354.


\bibitem{CM}
T. H. Colding and W. P. Minicozzi II.
\newblock Liouville properties.
\newblock {\em ICCM Not. 7 }(2019), no. 1, 16-26.

\bibitem{CM1} T. H. Colding and W. P. Minicozzi II.
\newblock Harmonic functions on manifolds.
\newblock {\em Ann. of Math.} (2) 146 (1997), no. 3, 725-747.

\bibitem{CM2} T. H. Colding and W. P. Minicozzi II,
\newblock Weyl type bounds for harmonic functions.
\newblock {\em Inventiones Math.} 131 (1998), 257-298.

\bibitem{CM3}
T. H. Colding and W. P. Minicozzi II,
\newblock Liouville theorems for harmonic sections and applications.
\newblock {\em Comm. Pure Appl. Math.} 52 (1998), 113-138.

\bibitem{Dancer} E. N. Dancer.
\newblock Superlinear problems on domains with holes of asymptotic shape and exterior problems.
\newblock{Math. Z.} 229 (1998), no. 3, 475–491.
		
\bibitem{MR0709038}
E. Di Benedetto.
\newblock {$C\sp{1+\alpha }$} local regularity of weak solutions of degenerate elliptic equations.
\newblock {\em Nonlinear Anal.}, 7(8):827--850, 1983.
		
\bibitem{MR2509378}
R. Filippucci.
\newblock Nonexistence of positive weak solutions of elliptic inequalities.
\newblock {\em Nonlinear Anal.}, 70(8):2903--2916, 2009.
		
\bibitem{MR2737854}
R. Filippucci.
\newblock Nonexistence of nonnegative solutions of elliptic systems of divergence type.
\newblock {\em J. Differential Equations}, 250(1):572--595, 2011.
		
\bibitem{MR3072673}
R. Filippucci.
\newblock Quasilinear elliptic systems in {$\Bbb{R}^N$} with multipower forcing terms depending on the gradient.
\newblock {\em J. Differential Equations}, 255(7):1839--1866, 2013.
		
\bibitem{MR4095468}
R. Filippucci, P. Pucci, and P. Souplet.
\newblock A {L}iouville-type theorem for an elliptic equation with superquadratic growth in the gradient.
\newblock {\em Adv. Nonlinear Stud.}, 20(2):245--251, 2020.
		
\bibitem{MR615628}
B. Gidas and J. Spruck.
\newblock Global and local behavior of positive solutions of nonlinear elliptic equations.
\newblock {\em Comm. Pure Appl. Math.}, 34(4):525--598, 1981.
		
\bibitem{han2023gradient}
D. Han, J. He, and Y.-D. Wang.
\newblock Gradient estimates for $\Delta_pu-|\nabla u|^q+b(x)|u|^{r-1}u=0$ on a complete Riemannian manifold and Liouville type theorems, preprint, 2023.
		
\bibitem{he2023gradient}
Jie He, Youde Wang, and Guodong Wei.
\newblock Gradient estimate for solutions of the equation $\Delta_pv +av^q=0$
on a complete riemannian manifold.
\newblock {\em Math. Z.}, 306(3):Paper No. 46, 19, 2024.
		
\bibitem{MR2518892}
B. Kotschwar and L. Ni.
\newblock Local gradient estimates of {$p$}-harmonic functions, {$1/H$}-flow, and an entropy formula.
\newblock {\em Ann. Sci. \'{E}c. Norm. Sup\'{e}r. (4)}, 42(1):1--36, 2009.
		
\bibitem{MR0834612}
P. Li and S.-T. Yau.
\newblock On the parabolic kernel of the {S}chr\"{o}dinger operator.
\newblock {\em Acta Math.}, 156(3-4):153--201, 1986.

\bibitem{LZ}
Y. Li and L. Zhang.
\newblock  Liouville-type theorems and Harnack-type inequalities for
semilinear elliptic equations,
\newblock {\em J. Anal. Math.}, 90(2003), 27-87.
		
\bibitem{MR833413}
P.-L. Lions.
\newblock Quelques remarques sur les probl\`emes elliptiques quasilin\'{e}aires du second ordre.
\newblock {\em J. Analyse Math.}, 45:234--254, 1985.
		
\bibitem{MR1879326}
E. Mitidieri and S.-I. Pokhozhaev.
\newblock A priori estimates and the absence of solutions of nonlinear partial differential equations and inequalities.
\newblock {\em Tr. Mat. Inst. Steklova}, 234: 1--384, 2001. English translation in Proc. Steklov Inst. Math. no. 3, 1--362, 2001.
		
\bibitem{MR829846}
W.-M. Ni and J. Serrin.
\newblock Nonexistence theorems for singular solutions of quasilinear partial differential equations.
\newblock {\em Comm. Pure Appl. Math.}, 39(3):379--399, 1986.
		
\bibitem{PQS} P. Pol\'a$\check{c}$ik, P. Quittner and P. Souplet
\newblock{Singularity and decay estimates in superlinear problems via Liouville-type theorems, I: elliptic equations and systems},
\newblock{\em Duke Math. J.}, 139(3), 555--574, 2007.
		
\bibitem{saloff1992uniformly}
L. Saloff-Coste.
\newblock Uniformly elliptic operators on riemannian manifolds.
\newblock {\em Journal of Differential Geometry}, 36(2):417--450, 1992.
		
\bibitem{MR1333601}
R. Schoen and S.-T. Yau.
\newblock {\em Lectures on differential geometry}.
\newblock Conference Proceedings and Lecture Notes in Geometry and Topology, I. International Press, Cambridge, MA, 1994.
\newblock Lecture notes prepared by Wei-Yue Ding, Kung-Ching Chang [Gong-Qing Zhang], Jia-Qing Zhong and Yi-Chao Xu, Translated from the Chinese by Ding and S. Y. Cheng, With a preface translated from the Chinese by Kaising Tso.

\bibitem{Ser1}
J. Serrin.
\newblock Local behavior of solutions of quasi-linear equations.
\newblock{\em Acta Math.}, 111(1964), 247--302.

\bibitem{Ser2}
J. Serrin.
\newblock Isolated singularities of solutions of quasi-linear equations.
\newblock{\em Acta Math.}, 113(1965), 219--240.

\bibitem{Ser3}
\newblock Entire solutions of nonlinear Poisson equations.
\newblock{\em Proc. London Math. Soc.}, 24 (1972), 348--366.
		
\bibitem{MR1946918}
J. Serrin and H.-H. Zou.
\newblock Cauchy-{L}iouville and universal boundedness theorems for quasilinear elliptic equations and inequalities.
\newblock {\em Acta Math.}, 189(1):79--142, 2002.
		
\bibitem{Sou} P. Souplet.
\newblock The proof of the Lane-Emden conjecture in four space dimensions,
\newblock {\em Advances in Mathematics},  221:1409–-1427, (2009).
		
\bibitem{MR4498474}
Y.-H. Sun, J. Xiao, and F.-H. Xu.
\newblock A sharp {L}iouville principle for {$\Delta_m u+u^p|\nabla u|^q\le0$} on geodesically complete noncompact {R}iemannian manifolds.
\newblock {\em Math. Ann.}, 384(3-4):1309--1341, 2022.
		
\bibitem{MR3275651}
C.-J.~Anna Sung and J.-P. Wang.
\newblock Sharp gradient estimate and spectral rigidity for {$p$}-{L}aplacian.
\newblock {\em Math. Res. Lett.}, 21(4):885--904, 2014.
		
\bibitem{MR0727034}
P. Tolksdorf.
\newblock Regularity for a more general class of quasilinear elliptic equations.
\newblock {\em J. Differential Equations}, 51(1):126--150, 1984.
		
\bibitem{MR0474389}
K. Uhlenbeck.
\newblock Regularity for a class of non-linear elliptic systems.
\newblock {\em Acta Math.}, 138(3-4):219--240, 1977.

\bibitem{Wang}You-De Wang.
\newblock Harmonic maps from noncompact Riemannian manifolds with non-negative Ricci curvature outside a compact set.
\newblock {\em Proc. Roy. Soc. Edinburgh Sect. A}, 124 (1994), no. 6, 1259-1275.
		
\bibitem{MR2880214}
X.-D. Wang and L. Zhang.
\newblock Local gradient estimate for {$p$}-harmonic functions on {R}iemannian manifolds.
\newblock {\em Comm. Anal. Geom.}, 19(4):759--771, 2011.
		
\bibitem{MR4559367}
Y.-D. Wang and G.-D. Wei.
\newblock On the nonexistence of positive solution to {$\Delta u + au^{p+1} = 0$} on {R}iemannian manifolds.
\newblock {\em J. Differential Equations}, 362:74--87, 2023.
		
\bibitem{MR4211885}
Y.-Z. Wang and Y. Xue.
\newblock Gradient estimates for a parabolic {$p$}-{L}aplace equation with logarithmic nonlinearity on {R}iemannian manifolds.
\newblock {\em Proc. Amer. Math. Soc.}, 149(3):1329--1341, 2021.
		
\bibitem{MR431040}
S.-T. Yau.
\newblock Harmonic functions on complete {R}iemannian manifolds.
\newblock {\em Comm. Pure Appl. Math.}, 28: 201--228, 1975.
		
\bibitem{MR3912761}
L. Zhao.
\newblock Liouville theorem for weighted {$p$}-{L}ichnerowicz equation on smooth metric measure space.
\newblock {\em J. Differential Equations}, 266(9): 5615--5624, 2019.
		
\bibitem{MR3866881}
L. Zhao and D.-Y. Yang.
\newblock Gradient estimates for the {$p$}-{L}aplacian {L}ichnerowicz equation on smooth metric measure spaces.
\newblock {\em Proc. Amer. Math. Soc.}, 146(12): 5451--5461, 2018.
\end{thebibliography}

\end{document}